\setlist{leftmargin=24pt,labelindent=24pt}
\setlist[enumerate]{wide=0pt, leftmargin=24pt, labelwidth=24pt, align=left}
\newtheorem{theorem}{Theorem}
\newtheorem{claim}{Claim}[section]
\newtheorem{corollary}{Corollary}[section]
\newtheorem{definition}{Definition}[section]
\newtheorem{lemma}{Lemma}[section]
\newtheorem{problem}{Problem}
\newtheorem{remark}{Remark}[section]
\newenvironment{proof}[1][Proof]{\noindent\textbf{#1.} }{\ \rule{0.5em}{0.5em}}
\renewcommand{\refname}{\hfil References Cited\hfil}
\newcommand{\cH}{\mathcal{H}}
\newcommand{\ti}{\tilde{i}}
\newcommand{\tj}{\tilde{j}}
\newcommand{\R}{\mathbb{R}}
\newcommand{\dist}{\text{dist}}
\newcommand{\LA}[1]{\refstepcounter{equation}\text{(\theequation)}\label{#1}}
\begin{document}

\title{$C^m$ Semialgebraic Sections Over the Plane}
\date{\today}
\author{Charles Fefferman, Garving K. Luli}
\maketitle


\section{Introduction}

In this paper we settle the two-dimensional case of a conjecture involving
unknown semialgebraic functions with specified smoothness.

Recall that a \emph{semialgebraic set } $E\subset \mathbb{R}^{n}$ is a union
of finitely many sets of the form
\begin{equation*}
\{x\in \mathbb{R}^n :P_{1}(x),P_{2}(x),\cdots ,P_{r}(x)>0,\text{ and }%
Q_{1}(x)=Q_{2}(x)=\cdots =Q_{s}(x)=0\}
\end{equation*}%
for polynomials $P_{1},\cdots ,P_{r},Q_{1},\cdots ,Q_{s}$ on $\mathbb{R}^n$. (We allow the
cases $r=0$ or $s=0$.)

A \emph{semialgebraic function} $\phi :E\rightarrow \mathbb{R}^{D}$ is a
function whose graph $\{(x,\phi (x)):x\in E\}$ is a semialgebraic set.

We define smoothness in terms of $C^m$ and $C^m_{loc}$. Here, $C^m\left( \mathbb{R}^{n},\mathbb{R}^{D}\right) $ denotes the
space of all $\mathbb{R}^{D}$-valued functions on $\mathbb{R}^{n}$ whose
derivatives up to order $m$ are continuous and bounded on $\mathbb{R}^{n}$. $%
C_{loc}^{m}\left( \mathbb{R}^{n},\mathbb{R}^{D}\right) $ denotes the space
of $\mathbb{R}^{D}$-valued functions on $\mathbb{R}^{n}$ with continuous
derivatives up to order $m$. If $D=1$, we write $C^m\left( \mathbb{R}%
^{n}\right) $ and $C_{loc}^{m}\left( \mathbb{R}^{n}\right) $ in place of $%
C^m\left( \mathbb{R}^{n},\mathbb{R}^{D}\right) $ and $C_{loc}^{m}\left(
\mathbb{R}^{n},\mathbb{R}^{D}\right) $, respectively.

To motivate our conjecture, we pose the following problems.

\begin{problem}[Semialgebraic Whitney Problem; see \cite{zobin-problem}.]
\label{problem1} Fix $m\geq 0$. Let $\phi :E\rightarrow \mathbb{R}$ be
semialgebraic. Suppose $\phi $ extends to a $C^m_{loc}$ function on $\mathbb{R}%
^{n}$. Does it necessarily extend to a $C^m_{loc}$ \underline{semialgebraic}
function on $\mathbb{R}^{n}$?
\end{problem}

\begin{problem}[Linear Equations]
\label{pr2}Fix $m\geq 0.$ Consider the linear equation
\begin{equation}
A_{1}F_{1}+\cdots +A_{D}F_{D}=f  \label{equation}
\end{equation}%
for unknowns $F_{1},\cdots ,F_{D}$ on $\mathbb{R}^{n}$, where $A_{1},\cdots
,A_{D}$, $f$ are given semialgebraic functions. If equation $\left( \ref%
{equation}\right) $ admits a $C^m_{loc}$ solution $F_{1},\cdots ,F_{D}$, does it necessarily admit a $C^m_{loc}$ \underline{%
semialgebraic} solution?
\end{problem}

More generally, in place of (\ref{equation}) we can consider underdetermined
systems of linear equations.

Problem \ref{problem1} was raised by Bierstone and Milman in \cite{zobin-problem}.

Note that $m$ is fixed in the above problems so we are not allowed to lose
derivatives.

Problems \ref{problem1} and \ref{pr2} are instances of a more general
question. The purpose of this paper is to settle that question, and in
particular provide affirmative answers to Problems \ref{problem1} and \ref%
{pr2}, in the case of $C^m_{loc}\left( \mathbb{R}^{2}\right) $.

To pose our more general question, we set up notations and give a few basic
definitions.

Fix $m\geq 0$. If $F\in C^m_{loc}(\mathbb{R}^{n})$ and $x\in \mathbb{R}^{n}$, we
write $J_{x}(F)$ (the \textquotedblleft jet" of $F$ at $x$) to denote the $m$%
-th degree Taylor polynomial of $F$ at $x$.

Thus, $J_{x}(F)$ belongs to $\mathcal{P}$, the vector space of all such
polynomials.

For $x\in \mathbb{R}^{n}$, $P,Q\in \mathcal{P}$, we define $P\odot
_{x}Q=J_{x}(PQ)$. The multiplication $\odot _{x}$ makes $\mathcal{P}$ into a
ring, denoted by $\mathcal{R}_{x}$, the \textquotedblleft ring of $m$-jets
at $x$". We have $J_{x}\left( FG\right) =J_{x}\left( F\right) \odot
_{x}J_{x}\left( G\right) $ for $F,G\in C^m_{loc}\left( \mathbb{R}^{n}\right) $.

We consider vector-valued functions $F=\left( F_{1},\cdots ,F_{D}\right) :%
\mathbb{R}^{n}\rightarrow \mathbb{R}^{D}$, and we write $F\in C^m_{loc}\left(
\mathbb{R}^{n},\mathbb{R}^{D}\right) $ if each $F_{i}\in C^m_{loc}\left( \mathbb{%
R}^{n}\right) $. We define $J_{x}F=\left( J_{x}F_{1},\cdots
,J_{x}F_{D}\right) \in \mathcal{P\oplus \cdots \oplus P}$. Under the natural
multiplication
\begin{equation*}
Q\odot _{x}\left( P_{1},\cdots ,P_{D}\right) :=\left( Q\odot
_{x}P_{1},\cdots ,Q\odot _{x}P_{D}\right) \text{,}
\end{equation*}%
the vector space $\mathcal{P\oplus \cdots \oplus P}$ becomes an $\mathcal{R}%
_{x}$ module, which we denote by $\mathcal{R}_{x}^{D}$.

We will discuss $\mathcal{R}_{x}$-submodules of $\mathcal{R}_{x}^{D}$; we
allow both $\left\{ 0\right\} $ and $\mathcal{R}_{x}^{D}$ as submodules of $%
\mathcal{R}_{x}^{D}$.

Fix $m,n,D$, and a subset $E\subset \mathbb{R}^{n}$. For each $x \in E$, let
\begin{equation*}
H\left( x\right) =f\left( x\right) +I\left( x\right) \subset \mathcal{R}%
_{x}^{D}
\end{equation*}%
be given, where $f\left( x\right) \in \mathcal{R}_{x}^{D}$ and $I\left(
x\right) \subset \mathcal{R}_{x}^{D}$ is an $\mathcal{R}_{x}$-submodule.
Then the family
\begin{equation}
\mathcal{H}=(H(x))_{x\in E}  \label{bundle}
\end{equation}%
is called a \textquotedblleft bundle" over $E$. $H(x)$ is called the
\underline{fiber} of $\mathcal{H}$ at $x$.
\begin{remark}\label{classicalbundle}
We remark that our notion of bundle differs from the notion of a bundle considered
previously (e.g, \cite{FL04}). In the present version, we do not require $E$ to be
compact and we require all the fibers $H\left( x\right) $ to be non-empty.\end{remark}

When $m,n,D$ are not clear from context, we speak of a \textquotedblleft bundle
with respect to $C^m_{loc}\left( \mathbb{R}^{n},\mathbb{R}^{D}\right) $".

If $\mathcal{H}$ is given by (\ref{bundle}) and $E^{\prime }\subset E$, then
we write $\left. \mathcal{H}\right\vert _{E^{\prime }}$ to denote the bundle
$\left( H\left( x\right) \right) _{x\in E^{\prime }}$, and refer to $\mathcal{H}|_{E^\prime}$ as the \underline{restriction of $\mathcal{H}$ to $E^\prime$}.

A \textquotedblleft section" of the bundle $\mathcal{H}$ in \eqref{bundle} is a vector-valued
function $F\in C^m_{loc}(\mathbb{R}^{n},\mathbb{R}^{D})$ such that $J_{x}F\in
H(x)$ for all $x\in E$.

Note that sections $F$ belong to $C^m_{loc}\left( \mathbb{R}^{n},\mathbb{R}%
^{D}\right) $ by definition.

The bundle (\ref{bundle}) is called \textquotedblleft semialgebraic" if
\begin{equation*}
\left\{ \left( x,P_{1},\cdots ,P_{D}\right) :\mathbb{R}^{n}\oplus \mathcal{%
P\oplus \cdots \oplus \mathcal{P}}:x\in E,\left( P_{1},\cdots ,P_{D}\right)
\in H\left( x\right) \right\}
\end{equation*}%
is a semialgebraic set.

We can now state our general problem.

\begin{problem}
\label{problem2} Let $\mathcal{H}=(H(x))_{x\in E}$ be a semialgebraic bundle
with respect to $C^m_{loc}\left( \mathbb{R}^{n},\mathbb{R}^{D}\right) $. If $%
\mathcal{H}$ has a section, does it necessarily have a semialgebraic
section?
\end{problem}

Again, we note that sections of $\mathcal{H}$ must belong to $C^m_{loc}$ for
fixed $m$, so we are not allowed to lose derivatives.

One checks easily that Problems \ref{problem1} and \ref{pr2} are instances
of Problem \ref{problem2}.

Indeed, suppose $\phi :E\rightarrow \mathbb{R}$ is semialgebraic, as in
Problem \ref{problem1}. Set $\mathcal{H=}\left( H\left( x\right) \right)
_{x\in E}$, where
\begin{equation*}
H\left( x\right) =\left\{ P\in \mathcal{P}:P\left( x\right) =\phi \left(
x\right) \right\} \text{.}
\end{equation*}%
Then $\mathcal{H}$ is a semialgebraic bundle, and a section of $\mathcal{H}$ is precisely
a function $F\in C^m_{loc}\left( \mathbb{R}^{n}\right) $ such that $F=\phi $ on $%
E$.

Similarly, given an equation (\ref{equation}) as in Problem \ref{pr2}, set $%
\mathcal{H=}\left( H\left( x\right) \right) _{x\in \mathbb{R}^{n}}$ with
\begin{equation*}
H\left( x\right) =\left\{ \left( P_{1},\cdots ,P_{D}\right) \in \mathcal{P}%
^{D}:A_{1}\left( x\right) P_{1}\left( x\right) +\cdots +A_{D}\left( x\right)
P_{D}\left( x\right) =f\left( x\right) \right\} \text{.}
\end{equation*}%
Then $\mathcal{H}$ is a semialgebraic bundle, and a section of $\mathcal{H}$ is precisely
a solution $F=\left( F_{1},\cdots ,F_{D}\right) \in C^m_{loc}\left( \mathbb{R}%
^{n},\mathbb{R}^{D}\right) $ of equation (\ref{equation}).

In this paper, we settle the two-dimensional case of Problem \ref%
{problem2}.

\begin{theorem}
\label{main-theorem} Let $\mathcal{H}$ be a semialgebraic bundle with
respect to $C^m_{loc}\left( \mathbb{R}^{2},\mathbb{R}^{D}\right) .$ If $\mathcal{%
H}$ has a section, then it has a semialgebraic section.
\end{theorem}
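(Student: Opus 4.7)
The plan is to combine a Glaeser-type refinement with semialgebraic stratification of the plane, in the spirit of the classical $C^m$ Whitney extension theory but adapted to preserve semialgebraicity at every stage. First, I would introduce a refinement operator $\mathcal{H} \mapsto \mathcal{H}^\ast$ in which, for each $x \in E$, the new fiber $H^\ast(x)$ consists of those jets $P \in H(x)$ for which a finite, quantitative extendibility test holds at $x$ (e.g., the existence of compatible Whitney data at every small configuration of nearby points in $E$, with $C^m$-type norm bounds). Because this test can be phrased as a first-order formula in the coefficients of $P$ and in $x$, Tarski--Seidenberg ensures that $\mathcal{H}^\ast$ remains a semialgebraic bundle. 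Iterating this refinement a number of times bounded in terms of $\dim \mathcal{P}^D$ yields a \emph{Glaeser-stable} semialgebraic bundle $\mathcal{H}^\sharp$ whose sections coincide with those of $\mathcal{H}$ and for which every jet in every fiber is realized by some $C^m_{loc}$ section.

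Once stability is achieved, I would apply a semialgebraic cell decomposition of $\mathbb{R}^2$ adapted to $\mathcal{H}^\sharp$, producing finitely many disjoint semialgebraic cells of dimensions $0$, $1$, and $2$ on each of which the algebraic structure of the fiber is constant and varies semialgebraically in $x$. A semialgebraic section is then built by induction on cell dimension: on the $0$-cells one picks any jet in the (finite) fiber; on each $1$-cell one parametrizes by a semialgebraic arclength and solves a one-variable Whitney-type problem along the arc by explicit semialgebraic Hermite interpolation; on each $2$-cell one further decomposes into semialgebraic ``curvilinear rectangles'' on which a semialgebraic change of coordinates straightens the boundary, and then constructs an explicit semialgebraic $C^m$ extension (polynomial transverse to the straightened boundary, with semialgebraic coefficients determined by the jets already fixed on the lower-dimensional strata). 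The pieces are finally glued using semialgebraic $C^m$ cutoff functions built from high powers of semialgebraic distance to the strata, and the required $C^m$ smoothness of the sum follows from the jet compatibility that Glaeser stability guarantees on overlaps.

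The main obstacle is the $2$-dimensional extension on each curvilinear rectangle: Whitney's classical construction is unavailable because it relies on $C^\infty$ cutoffs that are not semialgebraic, so a genuinely new, explicit semialgebraic extension operator must be built from scratch. The planar setting is crucial here --- every semialgebraic cell has a simple topological type and admits a semialgebraic trivialization of a tubular neighborhood, which allows the extension to be specified rectangle by rectangle and matched along common boundary arcs using \L{}ojasiewicz-type inequalities to control transverse decay. A secondary difficulty is termination of the semialgebraic Glaeser refinement within a uniform number of rounds; this requires phrasing the extendibility criterion as a quantifier-free condition of bounded complexity by invoking a finiteness principle for $C^m(\mathbb{R}^2, \mathbb{R}^D)$ in which the number of test points and the required bounds depend only on $m$ and $D$.
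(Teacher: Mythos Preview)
Your proposal has a genuine gap at the step you yourself flag as ``the main obstacle.'' On each open $2$-cell, being a section is \emph{not} a boundary-value problem: the condition $J_xF\in H(x)$ is a pointwise constraint at every interior point, so ``polynomial transverse to the straightened boundary, with coefficients determined by the jets already fixed on the lower-dimensional strata'' cannot work as stated. Concretely, the paper shows (Lemma~\ref{semialgebraic-bundle-lemma} and Lemma~\ref{WIRM-lemma}) that on each $2$-cell the section condition is equivalent to a system of linear equations $F_{\pi_s i}(x,y)+\sum_{j>k_s}A^s_{ij}(x,y)F_{\pi_s j}(x,y)=\varphi^s_i(x,y)$, where the coefficients $A^s_{ij},\varphi^s_i$ are bounded but have derivatives blowing up like $[\mathrm{dist}(\cdot,\partial(\text{cell}))]^{-|\alpha|}$. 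So some components of $F$ are forced by the others throughout the cell, and the free components must be chosen so that the forced ones remain $C^m$ across the singular boundary curves --- this is not achievable by any transverse polynomial ansatz, and your Hermite/\L{}ojasiewicz gluing does not address it.

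The paper's actual mechanism is quite different from cell-by-cell extension. After Gaussian elimination with parameters (Lemma~\ref{gep-lemma}) and the log-derivative bound (Theorem~\ref{log-derivative-theorm}) to control the singular coefficients, the two-dimensional problem is \emph{reduced to a one-variable problem}: one seeks semialgebraic functions $\xi^s_{jl}(x)$ of a single variable (the transverse $y$-derivatives of $F$ along finitely many critical curves $y=\psi_s(x)$) satisfying finitely many linear equations, asymptotic $o(1)$ conditions, and Taylor compatibility between consecutive curves (Lemma~\ref{lemma-pit}). A variant of Helly's theorem (Theorem~\ref{helly-theorem}) and a quantitative Puiseux-type growth lemma (Lemma~\ref{semialgebraic-function-lemma}) are needed to show these one-variable conditions are both necessary and sufficient. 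Semialgebraic selection then produces the $\xi^s_{jl}$, and a patching lemma near cusps (Lemma~\ref{lemma-pnc-2}) reconstructs the $2$D section. Your outline does not contain this reduction to one variable, and without it there is no clear way to satisfy the interior constraint semialgebraically while maintaining $C^m$ regularity across the critical curves.
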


We give a quick sketch of the proof of Theorem \ref{main-theorem}.

By a change of coordinates and a partition of unity, we may localize the problem
to a small thin wedge
\begin{equation*}
\Gamma (c)=\{(x_{1},x_{2})\in \mathbb{R}^{2}:x_{1}\in \left[ 0,c\right]
,0 \leq x_{2}\leq x_1 \}. \end{equation*}

More precisely, it is enough to prove that $\mathcal{H}|_{\Gamma(c')}$ has a section for sufficiently small $c'$.

We may assume also that our bundle $\mathcal{H=}\left( H\left(
x_{1},x_{2}\right) \right) _{\left( x_{1},x_{2}\right) \in \Gamma \left(
c\right) }$ satisfies $H\left( \left( 0,0\right) \right) =\left\{ 0\right\} $%
.

We analyze what it means for a given $F=\left( F_{1},\cdots ,F_{D}\right)
\in C^m_{loc}\left( \mathbb{R}^{n},\mathbb{R}^{D}\right) $ with $J_{(0,0)}F=0$ to be a section of $%
\mathcal{H}$. Our analysis produces finitely many semialgebraic curves $%
\gamma _{1},\gamma _{2},\cdots ,\gamma _{s_{\max }}$ in $\Gamma \left(
c\right) $, and we find that $F$ is a section of $\mathcal{H}$ if and only if

\begin{itemize}
\item $F\left( x_{1},x_{2}\right) $ and its $x_{2}$-derivatives up to order $%
m$ satisfy finitely many linear equations on the $\gamma
_{s}$ and

\item $F$ satisfies finitely many linear equations on $\Gamma(c) \setminus
\left( \gamma _{1}\cup \cdots \cup \gamma _{s_{\max }}\right) .$
\end{itemize}

The curves $\gamma _{s}$ have the form $\gamma _{s}=\left\{ \left(
x,\psi _{s}\left( x\right) \right) :x\in \left[ 0,c\right] \right\} $
for semialgebraic functions $\psi _{1},\cdots ,\psi _{s_{\max }}$ of one
variable.

The heart of our proof is to use the above characterization to produce
finitely many linear equations and inequalities for unknown functions $\xi
_{sk}^{l}\left( x\right) $ of one variable ($l=0,\cdots ,m;k=1,\cdots
,D;s=1,\cdots ,s_{\max }$) with the following properties:

\begin{description}
\item[(A)] If $F=\left( F_{1},\cdots ,F_{D}\right) \in C^m_{loc}\left( \mathbb{R}%
^{2},\mathbb{R}^{D}\right) $ is a section of $\mathcal{H}$ then the
functions
\begin{equation}
\xi _{sk}^{l}\left( x_{1}\right) =\left. \partial _{x_{2}}^{l}F_{k}\left(
x_{1},x_{2}\right) \right\vert _{x_{2}=\psi _{s}\left( x_{1}\right) }
\label{eq3}
\end{equation}%
satisfy the above equations and inequalities for $x\in \left[ 0,c\right] $;
and conversely

\item[(B)] If \underline{semialgebraic} functions $\xi _{sk}^{l}\left(
x\right) $ satisfy the above equations and inequalities for $x\in \left[ 0,c%
\right] $, then for some small $c^{\prime }<c$ there exists a semialgebraic
section $F=\left( F_{1},\cdots ,F_{D}\right) $ of $\mathcal{H}|_{\Gamma {(c}%
^{\prime }{)}}$ such that (\ref{eq3}) holds for $x\in \left[ 0,c^{\prime }%
\right] $.
\end{description}

We can easily deduce Theorem \ref{main-theorem} from (A) and (B), as follows.

Because $\mathcal{H}|_{\Gamma \left( c \right) }$ has a section,
(A)\ tells us that the relevant equations and inequalities for the $\xi
_{sk}^{l}$ admit a solution.

Because all functions appearing in those equations and inequalities are
semialgebraic (except perhaps the unknowns $\xi _{sk}^{l}$), it follows
easily that we may take the $\xi _{sk}^{l}\left( x\right) $ to depend
semialgebraically on $x$. Thanks to (B), we obtain a semialgebraic section
of $\mathcal{H}|_{\Gamma \left( c'\right) }$, completing the proof
of Theorem \ref{main-theorem}. See Section \ref{proofmainlemma} for details.

Let us recall some of the literature regarding Problems \ref{problem1}, \ref%
{pr2}, \ref{problem2}. The literature on Whitney's extension problem goes back to the seminal
works of H.~Whitney \cite{Whitney1,Whitney2}, and includes fundamental contributions by G. Glaeser \cite{Glaeser}, Yu. Brudnyi and P. Shvartsman \cite{brudnyi1,brudnyi2,brudnyi3,Brudnyi4}, E. Bierstone, P. Milman,
and W. Paw{\l }ucki \cite{BMP03,BMP06,Bierstone5}, as well as our own
papers \cite{F05-J,F4,F05-Sh,F06,F07-L,F07-St,F09-Data-3,F09-omega,FIL13,FIL15-S1,FIL15-S2,FIL15-S3,FIL16,FIL16+}. In
the semialgebraic (and $o$-minimal) setting , the analogue of the classical Whitney extension
theorem is due to K. Kurdyka and W. Paw{\l }ucki \cite{kp} and A. Thamrongthanyalak \cite{tham}.

Problem \ref{problem1} in the setting of $C^1_{loc}\left( \mathbb{R}^{n}\right) $
was settled affirmatively by M. Aschenbrenner and A. Thamrongthanyalak \cite{at}.
Our results on Problem \ref{problem2} imply an affirmative solution for $%
C^m_{loc}\left( \mathbb{R}^{2}\right) $. For $C^m_{loc}\left( \mathbb{R}^{n}\right) $
with $m\geq 2$ and $n\geq 3$, Problems \ref{problem1}, \ref{pr2}, \ref{problem2} remain open.

The problem of deciding whether a (possibly underdetermined) system of
linear equations of the form (\ref{equation}) admits a $C^0_{loc}$ solution was
proposed by Brenner \cite{bren}, and Epstein-Hochster \cite{Hochster}. Two independent solutions
to this problem appear in Fefferman-Koll\'ar \cite{Feff-Kollar}. Fefferman-Luli \cite{cf-luli-dfq-cm} solved the
analogous problem for $C^m_{loc}$ $\left( m\geq 1\right) $. See also \cite{cf-luli-generator}.

Koll\'ar-Nowak \cite{kn-continuous} proved by example that an equation of the form (\ref{equation}) may fail to admit a solution by $C^0_{loc}$-rational functions,
even though $A_{1},\cdots ,A_{D}$ and $f$ are polynomials and a $C^0_{loc}$
solution $\left( F_{1},\cdots ,F_{D}\right) $ exists. They showed that $x_1^3x_2f_1 +(x_1^3-(1+x_3^2)x_2^3)f_2 = x_1^4$ has a continuous semialgebraic solution but no continuous rational solution $(f_1,f_2) \in C^0_{loc}(\mathbb{R}^3,\mathbb{R}^2)$. However, \cite{ath} shows that a semialgebraic $C^0_{loc}$
solution exists, and \cite{kn-continuous} shows that a solution by $C^0_{loc}$ semialgebraic
functions exists for Problems \ref{problem1} and \ref{pr2} posed over $%
\mathbb{R}^{2}$, again provided $A_{1},\cdots ,A_{D},f$ are polynomials.

A recent paper of Bierstone-Campesato-Milman \cite{bcm} shows that given a system of
equations (\ref{equation}) with semialgebraic data $A_{i}$, $f$, there exists a function $r: \mathbb{N} \rightarrow \mathbb{N}$ independent of $f$ such that if the system \eqref{equation} admits a $C^{r(m)}_{loc}$ solution, then it admits a semialgebraic $C^m_{loc}$ solution. The result of Bierstone-Campesato-Milman is more general than the
version stated  above; it applies to suitable $o$-minimal  structures.

\textbf{Acknowledgement. } We are grateful to Matthias Aschenbrenner, Edward Bierstone, Jean-Baptiste Campesato, Fushuai (Black) Jiang, Bo'az Klartag, J\'anos Koll\'ar, Pierre
Milman, Assaf Naor, Kevin O'Neill, Wies{\l }aw Paw{\l }ucki, and Pavel Shvartsman for their interest
and valuable comments. We would also like to thank the participants of the
11-th Whitney workshop for their interest in our work, and we thank
Trinity College Dublin, for hosting the workshop. The first author is supported by the Air Force Office of Scientific Research (AFOSR), under
award FA9550-18-1-0069, the National Science Foundation (NSF), under grant DMS-1700180, and the US-Israel 
Binational Science Foundation (BSF), under grant 2014055. The second author is supported by NSF Grant
DMS-1554733 and the UC Davis Chancellor’s Fellowship.

\section{Notation and Preliminaries}

\label{section-notation-prelim-old}

A function $f: \mathbb{R}^n \rightarrow \mathbb{R}$ is called a Nash function if it is real-analytic and semialgebraic.

Write $B(x,r)$ to denote the ball of radius $r$ about $x$ in $\R^n$.

The \underline{dimension} of a semialgebraic set $E\subset \mathbb{R}^{n}$
is the maximum of the dimensions of all the imbedded (not necessarily
compact) submanifolds of $\mathbb{R}^{n}$ that are contained in $E$.

We recall a few definitions from the Introduction.

Fix $m,n,D$, and a subset $E\subset \mathbb{R}^{n}$. For each $x \in E$, let
\begin{equation}
H\left( x\right) =f\left( x\right) +I\left( x\right) \subset \mathcal{R}%
_{x}^{D} \label{bunddle}
\end{equation}%
be given, where $f\left( x\right) \in \mathcal{R}_{x}^{D}$ and $I\left(
x\right) \subset \mathcal{R}_{x}^{D}$ is an $\mathcal{R}_{x}$-submodule.
Then the family
\begin{equation*}
\mathcal{H}=(H(x))_{x\in E}
\end{equation*}%
is called a \underline{bundle} over $E$. $H(x)$ is called the \underline{%
fiber} of $\mathcal{H}$ at $x$.

When $m,n,D$ are not clear from context, we speak of a ``bundle with
respect to $C^m_{loc}\left( \mathbb{R}^{n},\mathbb{R}^{D}\right) $".

If $\mathcal{H}$ is given by (\ref{bunddle}) and $E^{\prime }\subset E$, then
we write $\left. \mathcal{H}\right\vert _{E^{\prime }}$ to denote the bundle
$\left( H\left( x\right) \right) _{x\in E^{\prime }}$, and refer to it as \underline{the restriction of $\mathcal{H}$ to $E^\prime$}. If $\mathcal{H} = (H(x))_{x \in E}$ and $\mathcal{H}' = (H'(x))_{x \in E}$ are bundles, $\mathcal{H}'$ is called a \underline{subbundle} of $\mathcal{H}$ if $H'(x) \subset H(x)$ for all $x \in E$. We write $\mathcal{H}\supset \mathcal{H}'$ to denote that $\mathcal{H}'$ is a subbundle of $\mathcal{H}$.

What we called a ``bundle" in \cite{FL04} we now call a ``classical bundle".

The definition is as follows. Fix $m,n,D$. Let $E\subset \mathbb{R}^{n}$ be compact. A
\underline{classical bundle} over $E$ is a family $\mathcal{{H}}=\left( {H}%
\left( x\right) \right) _{x\in E}$ of (possibly empty) affine subspaces $%
{H}\left( x\right) \subset \mathcal{P}^{D}$, parametrized by the
points $x\in E$, such that each non-empty ${H}\left( x\right) $ has the
form
\begin{equation*}
{H}\left( x\right) =\vec{P}^{x}+\vec{I}\left( x\right)
\end{equation*}%
for some $\vec{P}^{x}\in \mathcal{P}^{D}$ and some $\mathcal{R}%
_x$-submodule $\vec{I}\left( x\right) $ of $\mathcal{P}^{D}$.

When $m,n,D$ are not clear from context, we speak of a ``classical bundle with respect to $C^m(\R^n,\R^D)$".

We remark again that our notion of bundle differs from the notion of bundles considered previously (e.g., \cite{FL04}). In the present version, we do not require that $E$ be compact and we require all the fibers $H(x)$ to be non-empty.

A \underline{section} of the bundle $\mathcal{H}$ is a vector-valued
function $F\in C_{loc}^{m}(\mathbb{R}^{n},\mathbb{R}^{D})$ such that $J_{x}F\in
H(x)$ for all $x\in E$. A \underline{section} of a classical bundle $\mathcal{H}$ is a vector-valued
function $F\in C^m(\mathbb{R}^{n},\mathbb{R}^{D})$ such that $J_{x}F\in
H(x)$ for all $x\in E$.

\section{Tools}

\subsection{Glaeser Refinements, Stable Glaeser Refinements}

Given a bundle $\mathcal{H}=(H(x))_{x\in E}$ for $C^m_{loc}(\mathbb{R}^{n},%
\mathbb{R}^{D})$ or a classical bundle $\mathcal{H}=(H(x))_{x \in E}$ for $C^m(\R^n,\R^D)$, we
define the \underline{Glaeser refinement} $\mathcal{H}^{\prime }=(H^{\prime
}(x))_{x\in E}$ as follows:

\begin{description}
\item[(GR)] Let $x_0 \in E$. A given $P_0 \in H(x_0)$ belongs to $H^{\prime }(x_0)$ if and only if the
following holds. Given $\epsilon > 0$, there exists $\delta >0$ such that
for all $x_1, \cdots, x_k \in B(x_0,\delta) \cap E$, where $k$ is a large enough constant depending only on $m$, $n$, and $D$, there exist $P_i \in
H(x_i)$ ($i = 1,\cdots, k$), such that
\begin{equation*}
\left|\partial^\alpha (P_i - P_j)(x_i) \right|\leq \epsilon |x_i -
x_j|^{m-|\alpha|},
\end{equation*}
for all $|\alpha|\leq m, 0 \leq i,j \leq k$.
\end{description}

A bundle or a classical bundle $\mathcal{H}$ is \underline{Glaeser stable} if $\mathcal{H}^{\prime
}=\mathcal{H}$.

Note that the Glaeser refinement $\mathcal{H}^{\prime }$ of $\mathcal{H}$
may have empty fibers, even if $\mathcal{H}$ has none. In that case, we know
that $\mathcal{H}$ has no sections. If $\mathcal{H}$ is a classical bundle, then so
is $\mathcal{H}'$. If $ \cH$ is a bundle and no fibers of $\cH'$ are empty, then $\cH'$ is a
bundle. Both for  bundles and for classical bundles, every section of
$\cH$ is a section of $\cH'$. (See  \cite{FL04}  for the case  of classical bundles;
the elementary proofs  carry over unchanged for bundles.) Note in
particular that if a given bundle $\cH$ has a section, then $\cH'$ has no
empty fibers, hence  $\cH'$ is a bundle and $\cH'$ has a section.

Starting from a classical bundle $\cH$, or a bundle $\cH$ with a section, we can perform iterated Glaeser
refinement to pass to ever smaller subbundles $\mathcal{H}^{\left( 1\right) }
$, $\mathcal{H}^{\left( 2\right) }$, etc., without losing sections. We set $%
\mathcal{H}^{\left( 0\right) }=\mathcal{H}$, and for $l\geq 0$, we set $%
\mathcal{H}^{\left( l+1\right) }=\left( \mathcal{H}^{\left( l\right)
}\right) ^{\prime }$. Thus, by an obvious induction on $l$, we have $%
\mathcal{H=\mathcal{H}}^{\left( 0\right) }\supset \mathcal{H}^{\left(
1\right) }\supset \cdots $, yet $\mathcal{H}$ and $\mathcal{H}^{\left(
l\right) }$ have the same sections for all $l\geq 0$.

If $\mathcal{H}=(H(x))_{x \in E}$ is a semialgebraic bundle with respect to $C^m_{loc}(\mathbb{R}^n,\R^D)$, by an obvious induction on $l$, we have $H^{(l)}(x)$ depends semialgebraically on $x$, where $\mathcal{H}^{(l)}=(H^{(l)}(x))_{x \in E}.$

In principle, each $\mathcal{H}^{\left( l\right) }$ can be computed from $%
\mathcal{H}$. We remark that iterated Glaeser refinement stabilizes after finitely many iterations (i.e. for a large enough integer $l^*$ determined by $m,n,D$, we have
$\mathcal{H}^{(l^*+1)}=\mathcal{H}^{(l^*)}$;
thus $\mathcal{H}^{(l^*)}$ is Glaeser stable. See \cite{FL04} for the case of classical
bundles;  the argument, which goes back to Glaeser \cite{Glaeser} and Bierstone-Milman-Paw\l{}ucki \cite{BMP03,BMP06}, applies unchanged for bundles. We call $\mathcal{H}^{(l^*)} $
the \underline{stable Glaeser refinement} of $\mathcal{H}$.)

The main results of \cite{FL04} give the following

\begin{theorem}\label{cm-glaeserthm} For a large enough integer constant $l_{\ast }$ determined by $m,n,$ and $D$,
the following holds. Let $\mathcal{H}$ be a classical bundle with
respect to $C^m\left( \mathbb{R}^{n},\mathbb{R}^{D}\right)$. Let $\mathcal{%
H}^{\left( 0\right) },\mathcal{H}^{\left( 1\right) },\mathcal{H}^{\left(
2\right) },\cdots $ be its iterated Glaeser refinements. Then $\mathcal{H}$
has a section if and only if $\mathcal{H}^{\left( l_{\ast
}\right) }$ has no empty fibers. Suppose $\mathcal{H}^{\left( l_{\ast
}\right) }$ has no empty fibers. Let $x_0 \in E$ and let $P_0$ belong to
the fiber of $\mathcal{H}^{\left( l_{\ast
}\right) }$ at $x_0$.  Then there exists a section $F$ of the
bundle $\mathcal{H}$, such that $J_{x_0}(F)=P_0$. Moreover, there exists a constant $k^\#$ depending only on $m,n,$ and $D$ such that the following holds: Suppose  $\mathcal{H}=(H(x))_{x \in E}$ is a Glaeser stable classical bundle. Assume the following holds for some constant $M>0$:
\begin{itemize}
    \item Given $x_1,\cdots x_{k^\#} \in E$, there exist polynomials $P_1,\cdots, P_{k^\#} \in \mathcal{P}^D$, with $P_i \in H(x_i)$ for $1 \leq i \leq k^\#$; $|\partial^\alpha P_i(x_i)|\leq M$ for all $|\alpha|\leq m, 1\leq i \leq k^\#$; and $|\partial^\alpha (P_i-P_j)(x_j)|\leq M|x_i-x_j|^{m-|\alpha|}$ for all $|\alpha| \leq m,1 \leq i,j\leq k^{\#}$.
\end{itemize}
 Then there exists $F\in C^m(\mathbb{R}^n,\mathbb{R}^D)$ with $\|F\|_{C^m(\mathbb{R}^n,\mathbb{R}^D)}\leq C(m,n, D)M$ and $J_x(F) \in H(x)$ for all $x \in E$.

\end{theorem}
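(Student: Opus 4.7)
The plan is to follow the strategy of our earlier paper \cite{FL04}, which in turn rests on Fefferman's finiteness principle for $C^m$ extensions and the Glaeser refinement machinery going back to Glaeser and Bierstone--Milman--Paw\l{}ucki \cite{Glaeser,BMP03}. The theorem packages three distinct statements: finite stabilization of iterated refinement together with the sectionability criterion; the prescribed-jet version; and the quantitative form with a controlled $C^m$ norm. I would treat them in that order, feeding each into the next.

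Step 1 is finite stabilization. Each fiber $H^{(l)}(x)$ is either empty or an affine subspace of $\mathcal{P}^D$, of dimension at most $D\binom{n+m}{m}$. A descending-chain argument on the local fiber dimension of the type used in \cite{BMP03,FL04} shows that after a number of iterations $l_{*}=l_{*}(m,n,D)$ the sequence must stabilize. Since every section of $\mathcal{H}$ is a section of each $\mathcal{H}^{(l)}$, $\mathcal{H}$ has a section only if $\mathcal{H}^{(l_{*})}$ has no empty fibers; the converse is the core content of the next step.

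Step 2, which is the main step, asserts that a Glaeser stable classical bundle $\mathcal{H}=(H(x))_{x\in E}$ with no empty fibers admits a section with quantitative control. The strategy is to invoke Fefferman's finiteness principle, which says that it suffices to verify, for every subset $S \subset E$ of size at most a universal constant $k^{\#}=k^{\#}(m,n,D)$, that there exist Whitney-compatible jets $P_x \in H(x)$ for $x \in S$ with a uniform norm bound $M$. Glaeser stability is tailored to supply precisely such a selection: by unwinding the definition (GR) at each point of $S$ and using stability under iterated refinement, one extracts, at any accuracy $\epsilon>0$, locally consistent polynomials from the fibers on $S$. A compactness/limit argument, valid because each $H(x)$ is a closed affine subspace of a finite-dimensional space, upgrades $\epsilon$-compatibility to genuine Whitney compatibility with constant comparable to the hypothesized $M$. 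The finiteness principle then produces $F\in C^m(\R^n,\R^D)$ with $\|F\|_{C^m(\R^n,\R^D)} \leq C(m,n,D)M$; that $J_x F \in H(x)$ for every $x \in E$ follows by applying (GR) at $x$ to a shrinking family of samples containing $x$ and passing to the limit using closedness of the fibers.

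For the prescribed-jet assertion, given $x_0 \in E$ and $P_0$ in the fiber of $\mathcal{H}^{(l_{*})}$ at $x_0$, form the subbundle $\widetilde{\mathcal{H}}$ by replacing that one fiber with the singleton $\{P_0\}$ and leaving the others as in $\mathcal{H}^{(l_{*})}$. Because $P_0$ already lies in the Glaeser stable fiber, one checks that $\widetilde{\mathcal{H}}$ reaches Glaeser stability after a bounded number of further refinements (still depending only on $m,n,D$) without producing empty fibers, so Step 2 yields a section $F$ of $\mathcal{H}$ with $J_{x_0}F = P_0$. The main obstacle throughout is Step 2: Glaeser stability provides only local, $\epsilon$-approximate compatibility, whereas the finiteness principle demands uniform Whitney constants on arbitrary $k^{\#}$-tuples spread across $E$. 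Bridging this gap requires choosing $l_{*}$ and $k^{\#}$ simultaneously so that the successive $\epsilon$-compatibilities can be upgraded, via an inductive patching argument over the levels of iterated refinement, into a single uniform Whitney bound with the claimed constant $C(m,n,D)M$ --- this patching argument is the technical engine carried out in \cite{FL04}.
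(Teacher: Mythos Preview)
The paper does not give its own proof of this theorem: it states Theorem~\ref{cm-glaeserthm} as a consequence of ``the main results of \cite{FL04}'' and moves on, so there is nothing to compare your argument against here. Your sketch is a reasonable high-level outline of the \cite{FL04} strategy (stabilization by fiber-dimension drop, finiteness principle for the quantitative bound, and the singleton-fiber trick for the prescribed-jet version), which is exactly what the paper is citing.
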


\subsection{Puiseux Series}

We will use the following elementary result regarding semialgebraic functions. For a
proof, see \cite{hormander}.

\begin{lemma} Suppose $f:\mathbb{R}\rightarrow \mathbb{R}$ is semialgebraic.
Then there exists a polynomial $P\left( z,x\right) \not\equiv 0$ on $\mathbb{R}^2$ such that $%
P\left( f\left( x\right) ,x\right) \equiv 0$. Moreover, for each $x_{0}\in
\mathbb{R}$ there exists $\delta>0$ such that $f\left( x\right) $ for $x \in (x_0,x_0+\delta)$ is given by a convergent Puiseux series.\end{lemma}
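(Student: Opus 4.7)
My plan is to prove the two assertions separately, using the semialgebraic structure of $f$ together with two classical facts: the dimension bound for semialgebraic sets (which forces a low-dimensional semialgebraic subset of $\mathbb{R}^{2}$ to lie in an algebraic curve), and Puiseux's theorem on the branches of an algebraic function.

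For the first part, observe that since $f$ is semialgebraic, so is its graph $\Gamma_{f}=\{(x,f(x)):x\in\mathbb{R}\}\subset\mathbb{R}^{2}$. Being the graph of a single-valued function, $\Gamma_{f}$ projects injectively to the $x$-axis, so any imbedded submanifold contained in $\Gamma_{f}$ has dimension at most $1$; hence the semialgebraic dimension of $\Gamma_{f}$ is at most $1$, and in particular strictly less than the ambient dimension $2$. By the definition of semialgebraic sets, $\Gamma_{f}$ is a finite union of basic sets of the form $\{P_{i}>0,\ldots,Q_{j}=0\}$. A basic piece of dimension $2$ must have empty equality part, but such a piece would be open and therefore contain two-dimensional patches, contradicting $\dim\Gamma_{f}\leq 1$. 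Therefore every basic piece of $\Gamma_{f}$ has at least one defining equation $Q_{j}(x,z)\equiv 0$; taking the product of one such $Q_{j}$ from each piece produces a nonzero polynomial $P(z,x)$ vanishing on $\Gamma_{f}$, i.e.\ $P(f(x),x)\equiv 0$ on $\mathbb{R}$.

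For the second part, fix $x_{0}\in\mathbb{R}$, and let $P(z,x)$ be the polynomial just produced. By dividing by the greatest common divisor (in the variable $z$) over $\mathbb{R}[x,z]$, we may assume $P$ is squarefree in $z$; then the discriminant $\Delta(x)$ of $P$ with respect to $z$ and the leading coefficient $a_{d}(x)$ of $z$ in $P$ are both nonzero polynomials in $x$. Choose $\delta>0$ so small that $\Delta$ and $a_{d}$ are nonvanishing on $(x_{0},x_{0}+\delta)$. By the monotonicity (and piecewise-analyticity) theorem for one-variable semialgebraic functions, we may further shrink $\delta$ so that $f$ is real-analytic on $(x_{0},x_{0}+\delta)$. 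Classical Puiseux theory now applies: the branches of $P(z,x)=0$ in a one-sided punctured neighborhood of $x_{0}$ are given by finitely many convergent Puiseux series in $(x-x_{0})^{1/N}$ for some integer $N\geq 1$. Since $f$ is an analytic function on $(x_{0},x_{0}+\delta)$ satisfying $P(f(x),x)=0$, it must coincide with one of these branches, hence is itself given by a convergent Puiseux series.

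The main obstacle is really a bookkeeping one: each of the two ingredients I need — (i) that a semialgebraic subset of $\mathbb{R}^{n}$ of dimension less than $n$ is contained in a proper algebraic hypersurface, and (ii) that semialgebraic functions of one real variable are piecewise analytic — is standard but requires invoking cell decomposition (or equivalent o-minimal machinery). Once these are in hand, matching $f$ to a Puiseux branch of $P$ is automatic from the analyticity of $f$ and the uniqueness of analytic continuation, so there is no genuinely new technical difficulty beyond assembling the classical pieces cited in \cite{hormander}.
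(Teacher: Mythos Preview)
The paper does not supply its own proof of this lemma; it simply records the statement as a standard fact and refers the reader to \cite{hormander}. So there is no ``paper's proof'' to compare against, and your write-up is effectively filling in what the authors chose to cite away.

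Your argument is correct and is the standard one. A couple of minor remarks. In the first part, your reasoning that each nonempty basic piece of $\Gamma_f$ must carry at least one polynomial equation (else it would be open in $\mathbb{R}^2$ and hence two-dimensional) is exactly right; this is the usual way one shows that a semialgebraic set of dimension $<n$ sits inside a proper algebraic hypersurface. In the second part, once you have arranged $\Delta(x)\neq 0$ and $a_d(x)\neq 0$ on $(x_0,x_0+\delta)$, you do not even need to invoke piecewise analyticity of $f$ separately: the implicit function theorem already gives that the $d$ roots of $P(z,x)=0$ are distinct analytic functions of $x$ on this interval, and mere continuity of $f$ there (from the monotonicity theorem) forces $f$ to coincide with one of them. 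Puiseux's theorem is then only needed to describe how that branch behaves as $x\to x_0^+$. Your self-identified ``obstacle'' is accurate: the only nontrivial inputs are cell decomposition (for continuity/monotonicity) and classical Puiseux, both of which are exactly what \cite{hormander} supplies.
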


\begin{corollary}\label{corollarytolemma_onevariablegrowth}
Let $F(x)$ be a semialgebraic function of one variable,
satisfying $|F(x)|=O(x^p)$ on $(0, c]$ for some given $p$. Then the
derivatives of $F$ satisfy
$|F^{(k)}(x)|=O(x^{p-k})$ on $(0, c']$ for some $c'$. Similarly, if $F(x)=o(x^p)$ for $x$  in $(0, c)$, then $F^{(k)}(x)=o(x^{p-k})$
for $x$ in $(0, c')$. More generally, $|F^{(k)}(x)|=O(|F(x)|/x^k)$ on $(0, c')$.
\end{corollary}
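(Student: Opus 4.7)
The plan is to apply the preceding Puiseux series lemma at $x_0 = 0$ and read off the conclusion from termwise differentiation.

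Since $F$ is semialgebraic, the lemma gives some $\delta > 0$ such that on $(0,\delta)$ the function admits a convergent Puiseux expansion
\begin{equation*}
F(x) = \sum_{j \geq j_0} a_j \, x^{j/N}
\end{equation*}
for some positive integer $N$ and integer $j_0$, with $a_{j_0} \neq 0$ (the case $F \equiv 0$ near $0$ is trivial). The hypothesis $|F(x)| = O(x^p)$ forces $j_0/N \geq p$, since otherwise the leading term would dominate and grow faster than $x^p$. Setting $c' = \delta/2$ (say), the series converges uniformly on compacta of $(0,\delta)$, and the same is true for its termwise derivatives (this is standard: substitute $z = x^{1/N}$ and use that a convergent power series in $z$ can be differentiated termwise inside its disc of convergence).

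Thus for any $k$,
\begin{equation*}
F^{(k)}(x) = \sum_{j \geq j_0} a_j \, q_{j,k} \, x^{j/N - k}, \qquad q_{j,k} = \tfrac{j}{N}\bigl(\tfrac{j}{N}-1\bigr)\cdots\bigl(\tfrac{j}{N}-k+1\bigr).
\end{equation*}
Let $j_1 \geq j_0$ be the smallest index with $a_{j_1} q_{j_1,k} \neq 0$ (if no such index exists, then $F^{(k)} \equiv 0$ near $0$ and every bound is trivial). The leading-order analysis of the convergent Puiseux series then gives $|F^{(k)}(x)| \leq C\, x^{j_1/N - k}$ on $(0,c']$. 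Since $j_1/N \geq j_0/N \geq p$, this yields $|F^{(k)}(x)| = O(x^{p-k})$ on $(0,c']$, proving the first assertion. For the $o(x^p)$ version: the hypothesis forces $j_0/N > p$ strictly, and the bound $|F^{(k)}(x)| \leq C x^{j_0/N - k}$ with $j_0/N - k > p - k$ gives $F^{(k)}(x) = o(x^{p-k})$.

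For the final, more general statement, observe that near $0$ we have $|F(x)| \geq \tfrac{1}{2}|a_{j_0}| x^{j_0/N}$ (shrinking $c'$ if necessary). Combined with $|F^{(k)}(x)| \leq C x^{j_1/N - k}$ and $j_1 \geq j_0$, this gives
\begin{equation*}
\frac{|F^{(k)}(x)|}{|F(x)|/x^k} \leq \frac{C x^{j_1/N - k}}{\tfrac{1}{2}|a_{j_0}| x^{j_0/N - k}} = \frac{2C}{|a_{j_0}|}\, x^{(j_1 - j_0)/N},
\end{equation*}
which is bounded on $(0,c']$. The only real subtlety is the use of the Puiseux lemma and the justification of termwise differentiation (which is routine via the substitution $z = x^{1/N}$); the rest is just reading off exponents. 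There is no genuine obstacle here, only bookkeeping for the case $q_{j_0,k} = 0$, which is handled by passing to the next nonzero term $j_1$.
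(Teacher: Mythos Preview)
Your proof is correct and follows exactly the route the paper intends: the paper states this as an immediate corollary of the Puiseux series lemma with no proof given, and your argument—expand $F$ as a convergent Puiseux series near $0$, read off the leading exponent, and differentiate termwise via the substitution $z=x^{1/N}$—is precisely the standard way to supply the missing details. Your handling of the edge case $q_{j_0,k}=0$ and the lower bound $|F(x)|\ge \tfrac12|a_{j_0}|x^{j_0/N}$ for the final assertion are both correct and necessary.
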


\begin{corollary}\label{corollary3.2}
Let $F$ be a semialgebraic function in $C^m_{loc}(\Omega_1)$, where
$\Omega_\delta=\{(x,y) \in \R^2:0\leq y \leq x< \delta\}$ for $\delta>0$. Then for  small
enough $\delta$, $F|_{\Omega_\delta}$ extends to a $C^m$  semialgebraic function on
$\R^2$.
\end{corollary}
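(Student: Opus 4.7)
The plan is to invoke the semialgebraic Whitney extension theorem of Kurdyka--Paw{\l}ucki \cite{kp} (see also Thamrongthanyalak \cite{tham}). Fix any $\delta_0 \in (0,1)$ and set $K = \{(x,y) \in \R^2 : 0 \leq y \leq x \leq \delta_0\}$; this is a compact, convex, semialgebraic subset of $\Omega_1$. Since $F \in C^m_{loc}(\Omega_1)$, the partial derivatives $\partial^\alpha F$ for $|\alpha| \leq m$ are continuous on $\Omega_1$, hence uniformly continuous and bounded on $K$. This produces a jet field $x \mapsto J_x F$ on $K$ that depends semialgebraically on $x$, since $F$ is semialgebraic and differentiation preserves semialgebraicity.

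Next I would verify the Whitney compatibility condition: for each multi-index $\alpha$ with $|\alpha| \leq m$,
$$\partial^\alpha\bigl(J_x F - J_y F\bigr)(x) = o\bigl(|x-y|^{m-|\alpha|}\bigr) \quad \text{as } |x-y| \to 0,$$
uniformly in $x,y \in K$. Since $K$ is convex, Taylor's theorem applies to $F$ along the line segment $[x,y] \subset K$, and the uniform continuity of $\partial^\beta F$ for $|\beta| \leq m$ on this compact set supplies the required $o$-estimate. Thus $(J_x F)_{x \in K}$ is a semialgebraic $C^m$ Whitney field.

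Applying the semialgebraic Whitney extension theorem to this field yields a semialgebraic $\tilde F \in C^m(\R^2)$ with $J_x \tilde F = J_x F$ for every $x \in K$. Because both $F$ and $\tilde F$ are $C^m$ on $K$ with identical $m$-jets at every point, their difference has vanishing derivatives up to order $m$ throughout $K$, so $F = \tilde F$ on $K$. Taking any $\delta \leq \delta_0$ then gives the desired semialgebraic $C^m$ extension of $F|_{\Omega_\delta}$ to $\R^2$.

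The main subtlety is selecting the correct form of the semialgebraic Whitney extension theorem to cite: we need one that takes a semialgebraic $C^m$ Whitney field on a closed semialgebraic set and returns a globally semialgebraic $C^m$ function on all of $\R^2$ (not just on a neighborhood of $K$). This is exactly the content of the cited results. Notably, no refined semialgebraic growth analysis near the corner of the wedge is required, and Corollary \ref{corollarytolemma_onevariablegrowth} is not invoked here: because $(0,0) \in \Omega_1$ and $F \in C^m_{loc}(\Omega_1)$, the jet field is already continuous up to the corner of $K$, so the Whitney condition holds uniformly without any delicate analysis.
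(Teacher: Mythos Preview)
Your proof is correct and is precisely the ``one-line'' route the paper itself alludes to at the start of its sketch: invoke the semialgebraic Whitney extension theorem of Kurdyka--Paw{\l}ucki (or Thamrongthanyalak) on the compact convex triangle $K$, where the Whitney compatibility condition is immediate from Taylor's theorem along segments and the uniform continuity of the $m$-th derivatives. The only minor omission is that the cited theorems a priori yield a $C^m_{loc}$ extension; a semialgebraic cutoff supported near $K$ then gives an element of $C^m(\R^2)$, exactly as in the paper's final step.

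The paper, by contrast, deliberately avoids this black box and writes out an elementary, self-contained construction: normalize so $J_{(0,0)}F=0$, extend across each boundary line by the transverse Taylor polynomial (using Corollary~\ref{corollarytolemma_onevariablegrowth} to control $x_1$-derivatives of the boundary data), glue with a cutoff $\theta(x_2/x_1)$, and finally localize. Your approach buys brevity and makes the corollary a direct application of existing machinery; the paper's approach buys independence from that machinery and exhibits the extension explicitly, at the cost of invoking the Puiseux growth estimates of Corollary~\ref{corollarytolemma_onevariablegrowth}. Both are valid, and you are right that in your route no delicate analysis at the corner is needed, since convexity of $K$ and continuity of the jet up to $(0,0)$ suffice.
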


\begin{proof}[Sketch of proof]
The result follows in one line from known results,
but we sketch an elementary proof.

Without loss of generality, we may suppose that $J_{(0,0)}F=0$.  Then
$\partial_{x_2}^k F(x_1,0)=o(x_1^{m-k})$ for $k\leq m$, hence
$\partial_{x_1}^l\partial_{x_2}^k F(x_1,0)=o(x_1^{m-k-l})$ for $0\leq k,l\leq m$.

We set $\tilde{F}(x_1,x_2)$ equal to the m-th degree Taylor polynomial of $x_2 \mapsto F(x_1,x_2)$ about $x_2=0$ for each fixed $x_1$. The  above  estimates
for derivatives of $F$ show that $\tilde{F}$ is $C^m$  on
$\tilde{\Omega}_{\delta}=\{(x_1,x_2): 0\leq -x_2 \leq x_1 \leq \delta\}$, and its
$x_2$-derivatives up to order $m$ agree with those of $F$  on the $x_1$-axis.
In particular, $J_{(0,0)}\tilde{F}=0$.

Similarly, we set $F^{\#}(x_1,x_2)$ equal to the m-th degree Taylor polynomial
of $x_2 \mapsto F(x_1,x_2)$ about $x_2=x_1$  for each fixed $x_1$.  Then $F^{\#}$ is
$C^m$ on
$\Omega^{\#}_\delta=\{(x_1,x_2):0\leq x_1 \leq x_2 \leq 2x_1 \leq 2\delta\}$, and its $x_2$-derivatives
up to order $m$ agree with those of $F$ on the line $x_1=x_2$. In particular,
$J_{(0,0)}F^{\#}=0$.

Setting $F^+=\begin{cases} F &\mbox{on } \Omega_\delta \\
\tilde{F} & \mbox{on } \tilde{\Omega}_{\delta} \\
F^{\#} & \mbox{on } \Omega^{\#}_\delta
\end{cases}$, we  see that $F^+$ is  a $C^m$ semialgebraic function on
$\{(x_1,x_2): x_1 \in [0, \delta], -x_1\leq x_2 \leq 2x_1\}, F^+=F$ on $\Omega_\delta$, and $J_{(0,0)}F^+=0$.

Next, let $\theta(t)$ be a $C^m$ semialgebraic function of one variable,
equal to 1 in $[0,1]$ and supported in $[-1,2]$. Then, for small enough
$\delta$, the function $F^{++}(x_1,x_2)=\theta(\frac{x_2}{x_1})\cdot F^+(x_1,x_2)$ for $x_1>0$, $F^{++}(x_1,x_2)=0$ otherwise, is a $C^m$ semialgebraic function on the disc $B(0, \delta)$ that agrees with our given $F$ on $\Omega_\delta$.

Finally, multiplying $F^{++}$ by a semialgebraic cutoff  function supported in a small disc about $(0,0)$ and equal to $1$ in a smaller disc, we
obtain a $C^m$ semialgebraic function  on $\R^2$ that agrees with $F$ on
$\Omega_\delta$ for small enough $\delta$.
\end{proof}

\subsection{Singularities of Semialgebraic Sets and Functions}

We recall a few standard properties of semialgebraic sets and functions.
\begin{itemize}
    \item Let $U \subset \R^n$ be an open semialgebraic set, and let 
$F:U\rightarrow\R^k$ be semialgebraic. Then there exists a semialgebraic subset $X 
\subset U$
of dimension less than $n$ (the ``singular set" of $F$) such that $F$ is 
real-analytic on $U\setminus X$. (See Chapter 8 in \cite{coste-book}.)
\item A zero-dimensional semialgebraic set is finite. A one-dimensional 
semialgebraic set is a union of finitely many real-analytic arcs and 
finitely many points. (See Chapter 2 in \cite{coste-book}.)
\end{itemize}

\subsection{Existence of Semialgebraic Selections}

For sets $X,Y$, we denote a map $\Xi$ from $X$ to the power set of $Y$ by $\Xi: X \rightrightarrows Y$ and call such $\Xi$ a set-valued map; a set-valued map $\Xi$ is semialgebraic if $\{(x,y): y \in \Xi(x) \}$ is a semialgebraic set. Let $E \subset\mathbb{R}^n$ and $\Xi :E \rightrightarrows\mathbb{R}^D$. A \underline{selection} of $\Xi$ is a map $f: E \rightarrow \mathbb{R}^D$ such that $f(x) \in \Xi(x) $ for every $x \in E$. We recall the following well-known result regarding semialgebraic selection (see, for example, \cite{book-semialgebraic}).

\begin{theorem}\label{semialgebraic-selection}
Let $\Xi:E \rightrightarrows \mathbb{R}^D$ be semialgebraic. If each $\Xi(x)$ is nonempty, then $\Xi$ has a semialgebraic selection.
\end{theorem}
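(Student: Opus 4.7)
The plan is to prove Theorem~\ref{semialgebraic-selection} by induction on the target dimension $D$, reducing to the one-variable base case which I would handle via cylindrical algebraic decomposition.

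For the base case $D=1$, I would look at the semialgebraic set $S=\{(x,y)\in E\times\R : y\in\Xi(x)\}$ and apply a cylindrical algebraic decomposition of $\R^n\times\R$ adapted to $S$. This produces a partition of $E$ into finitely many pairwise disjoint semialgebraic cells $E_1,\ldots,E_N$, together with, for each $E_i$, a finite list of semialgebraic functions $\xi_{i,1}<\xi_{i,2}<\cdots<\xi_{i,r_i}$ on $E_i$ (augmented formally by $\xi_{i,0}\equiv-\infty$ and $\xi_{i,r_i+1}\equiv+\infty$) such that for every $x\in E_i$ the fiber $\Xi(x)$ is a prescribed union of graph cells $\{y=\xi_{i,j}(x)\}$ and band cells $\{\xi_{i,j}(x)<y<\xi_{i,j+1}(x)\}$. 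Since $\Xi(x)\neq\emptyset$ for every $x$, at least one such cell appears on each $E_i$; I would pick the lowest-indexed present cell and define $f(x)$ on $E_i$ to be $\xi_{i,j}(x)$ if that cell is a graph, and to be a canonical interior value (say $(\xi_{i,j}(x)+\xi_{i,j+1}(x))/2$, or $\xi_{i,j}(x)+1$, resp.\ $\xi_{i,j+1}(x)-1$, when an endpoint is $\pm\infty$) if it is a band. The resulting $f:E\to\R$ is semialgebraic, being defined piecewise by semialgebraic formulas on finitely many semialgebraic pieces, and it satisfies $f(x)\in\Xi(x)$ for all $x\in E$.

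For the inductive step ($D>1$), write points of $\R^D$ as $(y_1,y')$ with $y'\in\R^{D-1}$. Projecting onto the first coordinate, I would form the semialgebraic set-valued map
\begin{equation*}
\Xi_1(x)=\{y_1\in\R : \text{there exists } y'\in\R^{D-1}\text{ with }(y_1,y')\in\Xi(x)\},
\end{equation*}
which is semialgebraic by Tarski--Seidenberg and has nonempty fibers by assumption. Applying the base case yields a semialgebraic selection $f_1:E\to\R$ with $f_1(x)\in\Xi_1(x)$. Then
\begin{equation*}
\tilde\Xi(x)=\{y'\in\R^{D-1} : (f_1(x),y')\in\Xi(x)\}
\end{equation*}
is again a semialgebraic set-valued map with nonempty fibers, to which the inductive hypothesis applies, producing a semialgebraic $\tilde f:E\to\R^{D-1}$ with $\tilde f(x)\in\tilde\Xi(x)$. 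The map $f(x)=(f_1(x),\tilde f(x))$ is then the desired semialgebraic selection of $\Xi$.

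The main obstacle is the base case $D=1$, which rests entirely on invoking cell decomposition to stratify $E$ into finitely many semialgebraic pieces on each of which the one-dimensional fiber $\Xi(x)$ is described by a fixed finite list of semialgebraic ``section functions". Once this structural description is in hand, the piecewise choice defining $f$ essentially writes itself, and the inductive step is a formal application of Tarski--Seidenberg together with the induction hypothesis. I would cite \cite{coste-book} or \cite{book-semialgebraic} for the cell decomposition machinery, as the paper already takes such tools as given.
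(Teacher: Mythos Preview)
Your proof is correct and follows the standard textbook argument; however, the paper does not actually prove this theorem. It is stated as a well-known result with a citation to \cite{book-semialgebraic}, so there is no ``paper's own proof'' to compare against. Your sketch via cylindrical decomposition for $D=1$ followed by induction on $D$ using Tarski--Seidenberg is exactly the proof one finds in the cited reference, so in that sense you have simply supplied what the paper chose to omit.
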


\subsection{Growth of Semialgebraic Functions }

Recall from \cite{cf-luli-dfq-cm} the following result

\begin{lemma}[Growth Lemma]
\label{growthlemma}Let $E\subset \mathbb{R}^{n_{1}}$ and $E^{+}\subset
E\times \mathbb{R}^{n_{2}}$ be compact and semialgebraic, with $\dim E^+
\geq 1$. Let $A$ be a semialgebraic function on $E^{+}$. Then there exist an
integer $K\geq 1$, a semialgebraic function $A_{1}$ on $E$, and a compact
semialgebraic set $\underline{E}^{+}\subset E^{+}$, with the following
properties.

\begin{description}
\item[(GL1)] $\dim \underline{E}^{+}<\dim E^{+}$.

For $x\in E$, set $E^{+}\left( x\right) =\left\{ y\in \mathbb{R}%
^{n_{2}}:\left( x,y\right) \in E^{+}\right\} $ and $\underline{E}^{+}\left(
x\right) =\left\{ y\in \mathbb{R}^{n_{2}}:\left( x,y\right) \in \underline{E}%
^{+}\right\} $. Then, for each $x\in E$, the following hold.

\item[(GL2)] If $\underline{E}^{+}\left( x\right) $ is empty, then
\begin{equation*}
\left\vert A\left( x,y\right) \right\vert \leq A_{1}\left( x\right) \text{
for all }y\in E^{+}\left( x\right) .
\end{equation*}

\item[(GL3)] If $\underline{E}^{+}\left( x\right) $ is non-empty, then
\begin{equation*}
\left\vert A\left( x,y\right) \right\vert \leq A_{1}\left( x\right) \cdot
\left[ \dist\left( y,\underline{E}^{+}\left( x\right) \right) \right] ^{-K}%
\text{ for all }y\in E^{+}\left( x\right) \setminus \underline{E}^{+}\left(
x\right) .
\end{equation*}
\end{description}
\end{lemma}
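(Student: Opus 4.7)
My plan is to take $\underline{E}^{+} \subset E^{+}$ to be the closed semialgebraic set of points at which $A$ fails to be locally bounded, and to deduce (GL2)--(GL3) from a Łojasiewicz-type inequality applied uniformly in the parameter $x \in E$.

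I would define $\underline{E}^{+} = \{(x,y) \in E^{+} : A \text{ is not locally bounded in } E^{+} \text{ at } (x,y)\}$, equivalently the set of $(x,y)$ such that some sequence $(x_{k},y_{k}) \in E^{+}$ with $(x_{k},y_{k}) \to (x,y)$ has $|A(x_{k},y_{k})| \to \infty$. This set is semialgebraic by Tarski--Seidenberg, closed in the compact set $E^{+}$ (its complement being the open locus of local boundedness), and lies in the complement of a dense open semialgebraic subset of $E^{+}$ on which $A$ is real-analytic (by the stratification theorem recalled in the previous subsection); this gives (GL1).

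For each fixed $x \in E$ with $\underline{E}^{+}(x) \neq \emptyset$, the semialgebraic function $A(x,\cdot)$ is locally bounded off $\underline{E}^{+}(x)$ on the compact fiber $E^{+}(x)$, so the classical Łojasiewicz inequality furnishes constants $C_{x}, K_{x}$ with
\[
|A(x,y)| \leq C_{x} \cdot \dist(y, \underline{E}^{+}(x))^{-K_{x}}
\]
for all $y \in E^{+}(x) \setminus \underline{E}^{+}(x)$. The main technical step is to uniformize the exponent $K_{x}$ in $x$. For each integer $K \geq 1$, define the semialgebraic function
\[
\Phi_{K}(x) = \sup_{y \in E^{+}(x) \setminus \underline{E}^{+}(x)} |A(x,y)| \cdot \dist(y, \underline{E}^{+}(x))^{K},
\]
and let $U_{K} = \{x \in E : \Phi_{K}(x) < \infty\}$. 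Since $\dist(y, \underline{E}^{+}(x)) \leq \operatorname{diam}(E^{+})$, one has $\Phi_{K+1}(x) \leq \operatorname{diam}(E^{+}) \cdot \Phi_{K}(x)$, so the chain $U_{1} \subset U_{2} \subset \cdots$ is ascending with union covering $\{x : \underline{E}^{+}(x) \neq \emptyset\}$. Using the structural fact that an integer-valued semialgebraic function on a bounded semialgebraic domain has finite range (since a semialgebraic subset of $\mathbb{R}$ contained in $\mathbb{Z}$ is a finite union of points, no non-trivial interval being possible), one extracts a single integer $K^{*}$ with $U_{K^{*}} \supset \{x : \underline{E}^{+}(x) \neq \emptyset\}$.

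I then define
\[
A_{1}(x) = \begin{cases} \sup_{y \in E^{+}(x)} |A(x,y)| & \text{if } \underline{E}^{+}(x) = \emptyset, \\ \Phi_{K^{*}}(x) & \text{otherwise}, \end{cases}
\]
a semialgebraic function on $E$. When $\underline{E}^{+}(x) = \emptyset$, $A(x,\cdot)$ is locally bounded at every point of the compact $E^{+}(x)$ and hence bounded, so $A_{1}(x) < \infty$; when $\underline{E}^{+}(x) \neq \emptyset$, $A_{1}(x) = \Phi_{K^{*}}(x) < \infty$ by the choice of $K^{*}$. Properties (GL2) and (GL3) then follow by construction, with $K = K^{*}$. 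The principal obstacle is the uniformization step: obtaining a single integer exponent $K^{*}$ that works for every $x \in E$ requires a parametric Łojasiewicz argument, and the crucial enabler is the observation that the minimal pointwise exponent $K_{x}$ can be arranged to be an integer-valued semialgebraic function of $x$, hence of finite range.
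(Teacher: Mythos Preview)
The paper does not give a self-contained proof of this lemma; it simply records that the Growth Lemma follows from a special case of a theorem of \L{}ojasiewicz and Wachta, with the derivation carried out in the authors' earlier paper. So the paper's ``proof'' is: invoke a parametric \L{}ojasiewicz inequality as a black box. Your approach is an attempt to reprove exactly that black box from the non-parametric \L{}ojasiewicz inequality, and the gap lies precisely at the point where the parametric strengthening is needed.

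Your choice of $\underline{E}^{+}$ as the non--locally-bounded locus is natural and gives (GL1), and the fiberwise step---that for each fixed $x$ some exponent $K_x$ works---is fine. The problem is the uniformization of $K_x$. You invoke the principle that an integer-valued semialgebraic function has finite range, applied to $x\mapsto K_x$. But you have not shown that $x\mapsto K_x$ is semialgebraic, and in fact the obvious route is circular: the graph of $K_x$ is $\bigcup_{n\geq 1}\{x:\Phi_n(x)<\infty,\ \Phi_{n-1}(x)=\infty\}\times\{n\}$, and while each slice is semialgebraic for \emph{fixed} $n$, the union is semialgebraic only if it is a finite union, which is exactly what you are trying to prove. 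One cannot rescue this by letting $K$ vary as a real parameter, because $d^{K}$ is not a semialgebraic function of $(d,K)$, so the family $\{\Phi_K\}_K$ is not a semialgebraic family and the usual o-minimal ``definable Skolem/uniform bound'' arguments do not apply. Equivalently, your chain $U_1\subset U_2\subset\cdots$ is an increasing sequence of semialgebraic sets whose union is the semialgebraic set $\{x:\underline{E}^{+}(x)\neq\emptyset\}$, but an increasing union of semialgebraic sets exhausting a semialgebraic set need not stabilize.

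In short, the step you flag as ``the crucial enabler'' is exactly the content of the parametric \L{}ojasiewicz inequality of \L{}ojasiewicz--Wachta that the paper cites; your argument assumes it rather than proves it. If you want a self-contained proof, you would need to supply a genuine parametric argument (e.g., via a stratification of $E^{+}$ adapted to $A$ and the projection to $E$, with uniform control of the Puiseux exponents on each stratum).
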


The Growth Lemma follows easily from a special of a theorem of {\L }ojasiewicz
and Wachta \cite{wachta}, as explained in \cite{cf-luli-dfq-cm}. We thank W. Paw{\l }ucki for teaching us that
implication.

We will apply the Growth Lemma to prove the following.

\begin{lemma}
\label{semialgebraic-function-lemma} 
Let $F\left( x,y\right) $ be a bounded semialgebraic function on $\left[ -1,1%
\right] \times (0,1],$ and suppose that
\begin{equation}
\lim_{y\rightarrow 0^{+}}F\left( x,y\right) =0\text{ for each }x\in \left[
-1,1\right] \text{.}  \label{lemma3.2.a}
\end{equation}%
Then there exist a positive integer $N$ and a semialgebraic function $%
A\left( x\right) $ on $\left[ -1,1\right] $ such that
\[
F\left( x,y\right) \leq A\left( x\right) y^{\frac{1}{N}}\text{ for all }%
\left( x,y\right) \in \left[ -1,1\right] \times (0,1]\text{.}
\]
\end{lemma}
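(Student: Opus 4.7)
We apply the Growth Lemma to an auxiliary ``reverse modulus of continuity'' for $F$ at $y=0^+$. Let $M:=\sup|F|$ and for $(x,t)\in[-1,1]\times[0,M]$ define the semialgebraic function
\[
h(x,t)\;:=\;\sup\!\left\{\,y\in[0,1]\;:\;\sup_{y'\in(0,y]}|F(x,y')|\leq t\,\right\},
\]
which is positive whenever $t>0$ thanks to the hypothesis $F(x,y)\to 0$. Put $G(x,t):=1/h(x,t)$ on $\{h>0\}$ and extend by $G(x,t):=0$ where $h=0$; then $G$ is semialgebraic on the compact set $E^+:=[-1,1]\times[0,M]$.

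Apply the Growth Lemma (Lemma \ref{growthlemma}) to $G$ on $E^+$ with base $E:=[-1,1]$. This yields an integer $K$, a semialgebraic function $A_1(x)$ on $[-1,1]$, and a compact semialgebraic $\underline{E}^+\subseteq E^+$ of dimension $\leq 1$ satisfying $G(x,t)\leq A_1(x)\,\dist(t,\underline{E}^+(x))^{-K}$ on $E^+(x)\setminus\underline{E}^+(x)$. For each $x\in[-1,1]$ such that $F(x,\cdot)$ is not identically zero on any interval $(0,\eta)$, we have $h(x,t)\to 0^+$ and hence $G(x,t)\to+\infty$ as $t\to 0^+$; the Growth Lemma estimate is consistent with this only if $\dist(t,\underline{E}^+(x))\to 0$, which (since $\underline{E}^+(x)$ is closed) forces $0\in\underline{E}^+(x)$ and therefore $\dist(t,\underline{E}^+(x))\leq t$. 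Hence $h(x,t)\geq t^K/A_1(x)$, equivalently $|F(x,y)|\leq t$ whenever $y\leq t^K/A_1(x)$. Setting $t:=\min\bigl((A_1(x)y)^{1/K},M\bigr)$ and using $|F|\leq M$ when $t=M$ delivers $|F(x,y)|\leq A_1(x)^{1/K}y^{1/K}$ on the range $0<y\leq M^K/A_1(x)$ and $|F(x,y)|\leq M$ otherwise; both are absorbed into a semialgebraic majorant of the form $A(x)y^{1/K}$. The residual case $F(x,\cdot)\equiv 0$ on some $(0,\eta_x)$ is trivial, since then $h(x,t)\geq \eta_x$. Taking $N:=K$ and
\[
A(x)\;:=\;\sup_{y\in(0,1]}\,\frac{|F(x,y)|}{y^{1/N}},
\]
which is semialgebraic by Tarski--Seidenberg and finite for every $x$ by the preceding analysis, yields the desired bound $F(x,y)\leq A(x)y^{1/N}$.

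The main obstacle is recognizing the correct auxiliary semialgebraic function $G(x,t)=1/h(x,t)$ on which to invoke the Growth Lemma and then interpreting the resulting estimate in light of the blowup $G(x,t)\to+\infty$ as $t\to 0^+$: this blowup is registered by the Growth Lemma only by forcing $(x,0)$ into the lower-dimensional singular set $\underline{E}^+$, and the ensuing inequality $\dist(t,\underline{E}^+(x))\leq t$ is precisely what converts the generic distance-based Growth Lemma estimate into the power-type decay of $F$ asserted by the lemma.
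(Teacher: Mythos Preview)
Your strategy coincides with the paper's: both define a semialgebraic ``modulus'' (your $h(x,t)$, the paper's $\delta(x,\varepsilon)$) and apply the Growth Lemma to its reciprocal. The gap is in the step following the Growth Lemma. From $0\in\underline{E}^+(x)$ you correctly deduce $\dist(t,\underline{E}^+(x))\leq t$, but then write ``Hence $h(x,t)\geq t^K/A_1(x)$.'' That inference goes the wrong way. The Growth Lemma gives
\[
h(x,t)\;\geq\;\frac{[\dist(t,\underline{E}^+(x))]^{K}}{A_1(x)},
\]
so to conclude $h(x,t)\geq t^K/A_1(x)$ you need $\dist(t,\underline{E}^+(x))\geq t$, not $\leq t$. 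Knowing only that $0$ lies in $\underline{E}^+(x)$ gives the useless direction.

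The missing ingredient is exactly what the paper supplies: since $\dim\underline{E}^+\leq 1$, for all but finitely many $x$ the fiber $\underline{E}^+(x)$ is finite; for such ``good'' $x$ one sets $\underline{\varepsilon}(x)=\tfrac12\min\bigl(\underline{E}^+(x)\setminus\{0\}\bigr)$ (or $1$ if that set is empty) and then for $0<t\leq\underline{\varepsilon}(x)$ the nearest point of $\underline{E}^+(x)$ to $t$ is either $0$ (distance exactly $t$) or lies beyond $2\underline{\varepsilon}(x)$ (distance $\geq t$), giving $\dist\geq t$ as required. The finitely many ``bad'' $x$ with infinite fiber are handled separately by the one-variable Puiseux expansion of $y\mapsto F(x,y)$, which already gives $|F(x,y)|\leq C_x\,y^{1/N_x}$ for each such $x$. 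Without this two-part argument your bound $h(x,t)\geq t^K/A_1(x)$ is unjustified, and the proof does not close.
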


\begin{proof}
It is enough to show that for some positive integer $N$ we have
\begin{equation}
\sup_{y\in (0,1]}\frac{\left\vert F\left( x,y\right) \right\vert }{y^{1/N}}%
<\infty \text{ for all }x\in \left[ -1,1\right] \text{,}  \label{lemma3.2.b}
\end{equation}%
for we may then set $A\left( x\right) =\sup_{y\in (0,1]}\frac{\left\vert
F\left( x,y\right) \right\vert }{y^{1/N}}$, and $A\left( x\right) $ will
depend semialgebraically on $x$.

For each fixed $x$, the function $y\mapsto F\left( x,y\right) $ is bounded
and given near $\left( 0,0\right) $ by a convergent Puiseux series that tends
to zero as $y\rightarrow 0^{+}$. Hence, for some positive integer $N_{x}$ we
have
\begin{equation}
\sup_{y\in (0,1]}\frac{\left\vert F\left( x,y\right) \right\vert }{%
y^{1/N_{x}}}<\infty \text{.}  \label{3.2.c}
\end{equation}%
Our task is to show that $N_{x}$ may be taken independent of $x.$ Thanks to (%
\ref{3.2.c}), we may exclude from consideration any given finite set of
\textquotedblleft bad" $x\in \left[ -1,1\right] $.

We recall our main hypothesis (\ref{lemma3.2.a}). For each $\left(
x,\varepsilon \right) \in \left[ -1,1\right] \times (0,1]$ there exists $%
\delta \in (0,1]$ such that $\left( x,\varepsilon ,\delta \right) $ belongs
to the semialgebraic set
\[
\left\{ \left( x,\varepsilon ,\delta \right) \in \left[ -1,1\right] \times
(0,1]\times (0,1]:\left\vert F\left( x,y\right) \right\vert \leq \varepsilon
\text{ for all }y\in (0,\delta ]\right\} .
\]%
Hence, there exists a semialgebraic function $\delta \left( x,\varepsilon
\right) $ mapping $\left[ -1,1\right] \times (0,1]$ into $(0,1]$ such that
\begin{equation}
\left\vert F\left( x,y\right) \right\vert \leq \varepsilon \text{ for }y\in
(0,\delta \left( x,\varepsilon \right) ],x\in \left[ -1,1\right]
,\varepsilon \in (0,1].  \label{ttt3}
\end{equation}

We set $\delta \left( x,0\right) =1$ for $x\in \left[ -1,1\right] $. Then $%
\delta :\left[ -1,1\right] \times \left[ 0,1\right] \rightarrow (0,1]$ is
semialgebraic and satisfies (\ref{ttt3}).

We now apply Lemma \ref{growthlemma} to the function $\frac{1}{\delta \left( x,\varepsilon
\right) }$.

Thus, we obtain a semialgebraic set $\underline{E}\subset \left[ -1,1\right]
\times \left[ 0,1\right] $, a positive integer $N,$ and a positive
semialgebraic function $\underline{\delta }\left( x\right) $ on $\left[ -1,1%
\right] $, with the following properties.
\begin{itemize}
\item $\dim \underline{E}\leq 1$.
\item For $x\in \left[ -1,1\right] $, let $\underline{E}\left( x\right) =\left\{
\varepsilon :\left( x,\varepsilon \right) \in \underline{E}\right\} $.
\end{itemize} 
Then%
\begin{equation}
\delta \left( x,\varepsilon \right) \geq \underline{\delta }\left( x\right)
\text{ (all }\varepsilon >0\text{) if }\underline{E}=\emptyset   \label{ttt4}
\end{equation}

and%
\begin{equation}
\delta \left( x,\varepsilon \right) \geq \underline{\delta }\left( x\right)
\cdot \left[ \dist\left( \varepsilon ,\underline{E}\left( x\right) \right) %
\right] ^{N}\text{ (all }\varepsilon \not\in \underline{E}(x)\text{) if }%
\underline{E}\not=\emptyset \text{.}  \label{ttt5}
\end{equation}%
Because $\dim \underline{E}\leq 1,$ there are at most finitely many $x\in %
\left[ -1,1\right] $ for which $\underline{E}\left( x\right) $ is infinite.

As explained above, we may discard those \textquotedblleft bad" $x$, it
is enough to prove (\ref{lemma3.2.b}) for all $x$ such that $\underline{E}%
\left( x\right) $ is finite.

From now on, we restrict attention to \textquotedblleft good" $x,$ i.e.,
those $x$ for which $\underline{E}\left( x\right) $ is finite.

Set
\[
\underline{\mathcal{\varepsilon }}\left( x\right) =\left\{
\begin{array}{l}
\frac{1}{2}\min \left( \underline{E}\left( x\right) \setminus \left\{
0\right\} \right)  \\
1%
\end{array}%
\right.
\begin{array}{l}
\text{if }\underline{E}\left( x\right) \text{ contains points other than }0
\\
\text{otherwise}%
\end{array}%
\text{.}
\]%
So $\underline{\mathcal{\varepsilon }}\left( x\right) >0$ for all
\textquotedblleft good" $x$.

If $\underline{E}\left( x\right) \not=\emptyset $, then $\dist\left(
\varepsilon ,\underline{E}\left( x\right) \right) \geq \varepsilon $ for $%
0<\varepsilon \leq \underline{\mathcal{\varepsilon }}\left( x\right) $,
hence (\ref{ttt5}) gives
\begin{equation}
\delta \left( x,\varepsilon \right) \geq \underline{\delta }\left( x\right)
\varepsilon ^{N}\text{ for }0<\varepsilon \leq \underline{\varepsilon }%
\left( x\right) \text{.}  \label{ttt6}
\end{equation}%
If instead $\underline{E}\left( x\right) =\emptyset $, then because $%
\underline{\mathcal{\varepsilon }}\left( x\right) =1,$ (\ref{ttt4}) again
gives (\ref{ttt6}). Thus, (\ref{ttt6}) holds in all cases.

Now suppose $0<y<\underline{\delta }\left( x\right) \cdot \left( \underline{%
\varepsilon }\left( x\right) \right) ^{N}$.

Then, setting $\varepsilon =\left( \frac{y}{\underline{\delta }\left(
x\right) }\right) ^{1/N}$ and applying (\ref{ttt6}), we find that $\delta
\left( x,\varepsilon \right) \geq y.$ The defining property of $\delta
\left( x,\varepsilon \right) $ therefore tells us that
\[
\left\vert F\left( x,y\right) \right\vert \leq \varepsilon =\left( \frac{y}{%
\underline{\delta }\left( x\right) }\right) ^{1/N}\text{.}
\]

Thus, for any \textquotedblleft good" $x,$ we have shown that
\begin{equation}
\frac{\left\vert F\left( x,y\right) \right\vert }{y^{1/N}}\leq \left(
\underline{\delta }\left( x\right) \right) ^{-1/N}\text{ for }0<y<\underline{%
\delta }\left( x\right) \cdot \left( \underline{\varepsilon }\left( x\right)
\right) ^{N}\text{.}  \label{ttt7}
\end{equation}%
On the other hand, recall that $F$ is bounded; say, $\left\vert F\left(
x,y\right) \right\vert \leq M$ for all $\left( x,y\right) \in \left[ -1,1%
\right] \times (0,1]$.

Hence,
\begin{equation}
\frac{\left\vert F\left( x,y\right) \right\vert }{y^{1/N}}\leq \frac{M}{%
\left( \underline{\delta }\left( x\right) \right) ^{1/N}\underline{%
\varepsilon }\left( x\right) }\text{ for }\underline{\delta }\left( x\right)
\cdot \left( \underline{\varepsilon }\left( x\right) \right) ^{N}\leq y\leq 1%
\text{.}  \label{ttt8}
\end{equation}%
Our desired estimate (\ref{lemma3.2.b}) is now immediate from (\ref{ttt7}) and
(\ref{ttt8}).

The proof of Lemma \ref{semialgebraic-function-lemma} is complete.
\end{proof}

Similar ideas can be used to prove an $n$-dimensional version of Lemma \ref{semialgebraic-function-lemma},
but we don't discuss it here.

\subsection{Logarithmic Derivatives of Semialgebraic Functions}
\label{seciton_log-derivative}
Let $V$ be a semialgebraic subset of $\mathbb{R}^n \times \mathbb{R}^m$.
Given $x \in \mathbb{R}^n$, we write $V(x)$ to denote the set of all $t \in
\mathbb{R}^m$ such that $(x,t) \in V$. Given $(x,t) \in \mathbb{R}^n \times
\mathbb{R}^m$, we write $\delta_V(x,t)$ to denote the distance from $t$ to $%
V(x)$. We take $\delta_V(x,t) = + \infty$ if $V(x)$ is empty. For a smooth
function $F(x,t)$ on $\mathbb{R}^n \times \mathbb{R}^m$, we write $\nabla_t
F(x,t)$ to denote the gradient of the function $t \mapsto F(x,t)$.

The following theorem is proven by A. Parusinski in \cite{AP1,AP2}. We
thank Edward Bierstone, Jean-Baptiste Campesato, Pierre Milman, and Wieslaw Paw{\l }ucki for pointing out the references, and thus helping us
remove 10 pages from our paper.

\begin{theorem}
\label{th-log} \label{theorem1} Let $F(x,t)$ be a (real-valued) subanalytic
function of $(x,t)\in \mathbb{R}^{n}\times \mathbb{R}^{m}$. Then there
exist a closed codimension 1 subanalytic set $V\subset \mathbb{R}^{n}\times \mathbb{%
R}^{m}$ and a constant $C>0$ such that outside $V$ the function $F$ is
smooth and moreover,
\begin{equation}
|\nabla _{t}F(x,t)|\leq C\frac{\left\vert F\left( x,t\right) \right\vert }{%
\delta _{V}\left( x,t\right) }\text{.}  \label{eq1}
\end{equation}
If $F$ is semialgebraic, then we can take $V$ to be semialgebraic.
\end{theorem}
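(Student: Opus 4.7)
I would prove this via the subanalytic preparation theorem of Parusinski (and its cousin due to Lion--Rolin), which places $F$ locally into a monomial-times-unit normal form in which the logarithmic derivative bound is essentially automatic. The statement for semialgebraic $F$ follows because every ingredient of the argument (Puiseux expansions, preparation, distance functions, cell decomposition) preserves the semialgebraic category.

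\emph{Step 1 (singular locus).} Let $V_0$ be the closure of the set of points where $F$ fails to be smooth in $(x,t)$. By the standard regularity theory for subanalytic functions (Chapter 8 of \cite{coste-book} in the semialgebraic case), $V_0$ is closed, subanalytic, and of codimension at least one. Any candidate $V$ must contain $V_0$, and outside such a $V$ we may freely differentiate $F$.

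\emph{Step 2 (preparation).} Apply the subanalytic preparation theorem to $F$. This yields a closed codimension-$1$ subanalytic set $V \supset V_0$ and a locally finite decomposition of its complement into open subanalytic cells $\Omega$ such that on each $\Omega$ there are subanalytic functions $\theta_1(x), \dots, \theta_k(x)$, indices $\sigma(1),\dots,\sigma(k) \in \{1,\dots,m\}$, and rational exponents $a_1,\dots,a_k$ with
\[
F(x,t) = U(x,t) \prod_{i=1}^{k} \bigl|t_{\sigma(i)} - \theta_i(x)\bigr|^{a_i},
\]
where $U$ is a ``unit'' on $\Omega$, i.e.\ both $|U|$ and $|1/U|$ are bounded on $\Omega$, and $|\nabla_t U / U|$ is bounded there too. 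One enlarges $V$ to contain the graphs of all the $\theta_i$ as well as the relative boundaries of all cells, keeping $V$ closed, subanalytic, and codim-$1$.

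\emph{Step 3 (logarithmic differentiation).} On each $\Omega$, direct calculation gives
\[
\frac{\partial_{t_j} F}{F} \;=\; \frac{\partial_{t_j} U}{U} \;+\; \sum_{i\,:\,\sigma(i)=j} \frac{a_i}{\,t_{\sigma(i)} - \theta_i(x)\,}.
\]
The first term is bounded by a constant on $\Omega$ by the unit property. For each summand in the sum, $|t_{\sigma(i)} - \theta_i(x)| \ge \delta_V(x,t)$ because the graph of $\theta_i$ lies in $V$; hence each summand is bounded by $C/\delta_V(x,t)$. Combining the $j$'s yields $|\nabla_t F(x,t)| \le C |F(x,t)| / \delta_V(x,t)$ on $\Omega$, as desired.

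\emph{Main obstacle.} The deepest ingredient is the preparation theorem in Step 2; once we have it, Step 3 is a one-line computation. The technical subtlety I expect to spend the most care on is \emph{uniformity} across the decomposition: one must verify that the finitely many constants $C_\Omega$ coming from the units on distinct cells, together with the finitely many rational exponents $a_i$, can be absorbed into a single constant $C$, and that the patched $V$ really remains closed, subanalytic, and codimension $1$ (so that crossing cell boundaries never causes $\delta_V$ to suddenly drop while $F$ stays large). Both points are built into the global form of Parusinski's theorem, but they are where the subanalytic (rather than merely $C^\infty$) machinery is essential.
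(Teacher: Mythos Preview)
Your sketch is correct in spirit and follows Parusinski's own argument. However, the paper does not actually prove this theorem: it simply cites Parusinski \cite{AP1,AP2}, noting explicitly that being pointed to these references allowed the authors to remove ten pages from the manuscript. So there is no ``paper's proof'' to compare against beyond the citation itself.

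That said, your outline is essentially the route Parusinski takes: the preparation theorem (or, in the one-variable-at-a-time form, the Puiseux-with-parameters / Lion--Rolin version) puts $F$ into monomial-times-unit form on each cell, after which the logarithmic-derivative bound is immediate. Two small points worth tightening if you write this up in full: first, the preparation theorem is usually stated one $t$-coordinate at a time, so reaching the multi-variable form you wrote down requires an induction on $m$ with some bookkeeping of the successive centers $\theta_i$; second, the unit condition you need is not just that $U$ and $1/U$ are bounded but that $|\nabla_t U|/|U|$ is bounded, and this is exactly what the ``reduced'' or ``strong'' form of preparation guarantees (it is not automatic from boundedness alone). Your identification of the uniformity of constants across cells as the main technical point is apt; this is handled by the local finiteness of the decomposition and compactness arguments in the global statement.
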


As a special case of Theorem \ref{th-log}, we have the following.

\begin{theorem}
\label{log-derivative-theorm} Let $F\left( x\right) $ be a semialgebraic
function on $\mathbb{R}^{n}$. Then there exist a closed semialgebraic $%
V\subset \mathbb{R}^{n}$ of dimension at most $\left( n-1\right) $, and a
constant $C$, such that $F$ is $C^m_{loc}$ outside $V$, and
\begin{equation*}
\left\vert \nabla F\left( x\right) \right\vert \leq C\left\vert F\left(
x\right) \right\vert \cdot \left[ \text{dist}\left( x,V\right) \right] ^{-1}
\end{equation*}%
for $x\in \mathbb{R}^{n}\setminus V$.
\end{theorem}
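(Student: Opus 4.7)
The plan is to deduce Theorem \ref{log-derivative-theorm} as an immediate specialization of Theorem \ref{th-log}, applied in the parameter-free case. Concretely, I would take the $x$-variables of Theorem \ref{th-log} to be absent (formally, let $n=0$ there) and let the $t$-variables play the role of the variable $x \in \mathbb{R}^n$ appearing in the current statement. Then $F(x)$, being semialgebraic on $\mathbb{R}^n$, is in particular a subanalytic function of $(t_1,\ldots,t_n) = x$, so Theorem \ref{th-log} applies directly.

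Carrying this out, Theorem \ref{th-log} produces a closed codimension-$1$ subanalytic set $V \subset \mathbb{R}^n$ and a constant $C>0$ such that $F$ is smooth on $\mathbb{R}^n \setminus V$ and
\[
|\nabla F(x)| \leq C\,\frac{|F(x)|}{\delta_V(x)} \qquad \text{for } x \in \mathbb{R}^n \setminus V,
\]
where $\delta_V(x) = \dist(x,V)$. Three trivial cleanups finish the proof. First, the last sentence of Theorem \ref{th-log} guarantees that because $F$ is semialgebraic, we may take $V$ to be semialgebraic. Second, ``closed codimension $1$'' in $\mathbb{R}^n$ is exactly the statement $\dim V \leq n-1$. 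Third, ``smooth outside $V$'' is stronger than ``$C^m_{loc}$ outside $V$,'' so the regularity claim is automatic. Thus every assertion of Theorem \ref{log-derivative-theorm} is just a rewriting of what Theorem \ref{th-log} yields in this degenerate-parameter case; there is no substantive obstacle, since the entire content has been packaged into the cited result of Parusi\'nski.
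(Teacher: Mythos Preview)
Your proposal is correct and is exactly the approach the paper takes: the paper simply states that Theorem \ref{log-derivative-theorm} is ``a special case of Theorem \ref{th-log}'' without further comment, which is precisely your specialization to the parameter-free situation (no $x$-variables, so $\nabla_t$ becomes the full gradient and $\delta_V(\cdot,t)=\dist(t,V)$). Your three cleanup remarks are accurate and match the obvious reading intended by the authors.
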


\subsection{Variant of Helly's Theorem}

\label{hltv}

We recall the following result from convex geometry. Surely more precise versions of the result are well known,
but we had trouble tracking down a reference so we will provide a proof.

\begin{theorem}[Helly's Theorem Variant]
\label{helly-theorem} Let $(p_{\omega })_{\omega \in \Omega }$ be a family
of seminorms on a vector space $V$ of dimension $D$. Assume that $%
\sup_{\omega \in \Omega }p_{\omega }(v)<\infty $ for every $v\in V$. Then
there exist $\omega _{1},\cdots ,\omega _{L}\in \Omega $, with $L$ depending
only on $D $, such that
\begin{equation*}
\sup_{\omega \in \Omega }p_{\omega }(v)\leq C\cdot \max\{p_{\omega
_{1}}(v),\cdots ,p_{\omega _{L}}(v)\}\text{ for all }v\in V,
\end{equation*}%
with $C$ also depending only on $D$.
\end{theorem}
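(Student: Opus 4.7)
The plan is to reduce to the case that $p := \sup_{\omega \in \Omega} p_\omega$ is a norm on $V$, apply John's theorem to equip $V$ with a Euclidean structure comparable to $p$, and then extract $\omega_1, \ldots, \omega_L$ by covering the Euclidean unit sphere with an $\epsilon$-net and exploiting the Lipschitz property of each $p_\omega$.

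First, the pointwise supremum of seminorms is again a seminorm, so $p$ is a seminorm on $V$, finite by hypothesis. Since $p_\omega \le p$ forces $\ker p \subseteq \ker p_\omega$, each $p_\omega$ descends to a seminorm on the quotient $V/\ker p$, and the inequality we seek is invariant under this quotient. Since $\dim(V/\ker p) \le D$, I may assume without loss of generality that $p$ is a norm on $V$. Applying the symmetric form of John's theorem to the unit ball of $p$ now yields an inner product with Euclidean norm $|\cdot|$ satisfying $|v|/\sqrt{D} \le p(v) \le |v|$ for all $v \in V$. The upper bound combined with subadditivity gives $|p_\omega(u) - p_\omega(v)| \le p_\omega(u-v) \le |u-v|$, so every $p_\omega$ is $1$-Lipschitz with respect to $|\cdot|$.

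Next, I fix an $\epsilon$-net $\{u_1, \ldots, u_L\}$ on the Euclidean unit sphere of $V$ with $\epsilon = 1/(4\sqrt{D})$; since this sphere lies in a space of dimension at most $D$, the cardinality $L$ depends only on $D$. For each $i$, since $p(u_i) \ge 1/\sqrt{D}$, I choose $\omega_i \in \Omega$ with $p_{\omega_i}(u_i) \ge 1/(2\sqrt{D})$. For an arbitrary nonzero $v \in V$, I set $u = v/|v|$ and pick $u_i$ with $|u - u_i| \le \epsilon$; the Lipschitz bound gives $p_{\omega_i}(u) \ge 1/(2\sqrt{D}) - \epsilon = 1/(4\sqrt{D})$, and positive homogeneity yields $\max_i p_{\omega_i}(v) \ge |v|/(4\sqrt{D}) \ge p(v)/(4\sqrt{D})$, proving the theorem with $C = 4\sqrt{D}$.

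The main conceptual obstacle is recognizing that the problem reduces to approximating the $p$-unit ball by the intersection of finitely many sets $\{p_{\omega_i} \le 1\}$, and that John's theorem immediately aligns all of these with a single Euclidean structure in which each seminorm is uniformly Lipschitz; once that viewpoint is in hand, the finite extraction becomes a standard $\epsilon$-net argument and the rest is bookkeeping.
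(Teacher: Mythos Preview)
Your proof is correct, but it takes a different route from the paper's. For the reduction to the norm case, the paper intersects the kernels $H(\omega)=\ker p_\omega$, picks $\lambda_1,\ldots,\lambda_s$ ($s\le D$) with $\bigcap_\omega H(\omega)=H(\lambda_1)\cap\cdots\cap H(\lambda_s)$, and replaces each $p_\omega$ by $p_\omega^*=p_{\lambda_1}+\cdots+p_{\lambda_s}+p_\omega$, which is a norm on the quotient; your reduction via $\ker(\sup_\omega p_\omega)$ is more direct and avoids adjoining the extra indices $\lambda_i$. For the norm case itself, the paper invokes a Helly-type ``Lemma on Convex Sets'' from \cite{F4} applied to the unit balls $K_\omega=\{p_\omega\le 1\}$, whereas you bypass any Helly-type statement entirely by using John's theorem to impose a Euclidean structure in which every $p_\omega$ is $1$-Lipschitz, and then extract the $\omega_i$ via an $\epsilon$-net on the sphere. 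Your argument is more self-contained and yields the explicit constant $C=4\sqrt{D}$; the paper's approach, on the other hand, stays within the convex-geometric framework that motivates the theorem's name.
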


We use the following variant of the classical Helly theorem (see Section 3 in \cite{F4}) from
elementary convex geometry.
\begin{lemma}\label{classical-helly}
Let $(K_\omega)_{\omega \in \Omega}$ be a collection of compact
convex symmetric subsets of $\R^D$. Suppose the intersection of all the $K_\omega$ has
nonempty interior. Then there exist $\omega_1,\cdots,\omega_L$ such that
$K_{\omega_1} \cap \cdots \cap K_{\omega_L} \subset
C\cdot\bigcap_{\omega\in \Omega} K_\omega$, where $C$ and $L$ depend only
on $D$.
\end{lemma}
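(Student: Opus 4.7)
The plan is to reduce to a Euclidean normalization via John's theorem and then extract a dimension-only-sized subfamily using a $\delta$-net argument on the sphere $S^{D-1}$. First, set $K := \bigcap_{\omega \in \Omega} K_\omega$; by hypothesis $K$ is a symmetric convex body, so John's theorem furnishes an ellipsoid $\mathcal{E}$ with $\mathcal{E} \subset K \subset \sqrt{D}\,\mathcal{E}$. After a linear change of coordinates (which preserves the statement we wish to prove, since every inclusion $\bigcap_i K_{\omega_i} \subset C\cdot K$ is $GL(D)$-equivariant), I may assume $\mathcal{E} = B_2$, the Euclidean unit ball, so that
\begin{equation*}
B_2 \;\subset\; K \;\subset\; \sqrt{D}\,B_2.
\end{equation*}

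I would then work with the Minkowski functionals $p_\omega(x) := \inf\{t>0 : x \in tK_\omega\}$ and $q(x) := \inf\{t>0 : x \in tK\}$. Since each $K_\omega \supset K \supset B_2$ is compact, convex, and symmetric, the $p_\omega$ and $q$ are genuine norms on $\R^D$; the containments $K \subset K_\omega$ and $\bigcap K_\omega = K$ translate to $p_\omega \le q$ and $q = \sup_\omega p_\omega$. The John sandwich gives the uniform bounds $q(u) \in [1/\sqrt{D},\,1]$ for $u \in S^{D-1}$, and $B_2 \subset K_\omega$ makes each $p_\omega$ (and $q$) $1$-Lipschitz with respect to the Euclidean norm. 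For every $u \in S^{D-1}$, the point $2u/q(u)$ has $q$-value $2$, hence lies outside $K$ and therefore outside some $K_{\omega(u)}$, yielding $p_{\omega(u)}(u) > q(u)/2 \ge 1/(2\sqrt{D})$.

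The main construction is then a $\delta$-net argument. Fix a $\delta$-net $\{u_1,\ldots,u_N\} \subset S^{D-1}$ with $\delta := 1/(4\sqrt{D})$ and $N \le (3/\delta)^D \le (12\sqrt{D})^D$, and set $\omega_j := \omega(u_j)$. For an arbitrary $u \in S^{D-1}$, choose a net-point $u_j$ with $|u - u_j| \le \delta$; the Lipschitz property of $p_{\omega_j}$ gives
\begin{equation*}
p_{\omega_j}(u) \;\ge\; p_{\omega_j}(u_j) - |u - u_j| \;>\; \frac{1}{2\sqrt{D}} - \frac{1}{4\sqrt{D}} \;=\; \frac{1}{4\sqrt{D}} \;\ge\; \frac{q(u)}{4\sqrt{D}}.
\end{equation*}
Hence $\max_j p_{\omega_j}(u) \ge q(u)/(4\sqrt{D})$ on $S^{D-1}$, and by homogeneity on all of $\R^D$. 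Translating back into set language, this is precisely $\bigcap_{j=1}^{N} K_{\omega_j} \subset (4\sqrt{D})\cdot K$, with both $N$ and the constant depending only on $D$.

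The main obstacle is keeping the size $L$ of the selected subfamily independent of the particular family $(K_\omega)$: a plain compactness argument on $S^{D-1}$ yields only a finite cover of unknown cardinality. John's theorem is what overcomes this by normalizing $K$ to sit between $B_2$ and $\sqrt{D}\,B_2$; this single normalization supplies both a uniform lower bound for $q$ on the sphere and a uniform Lipschitz constant for the $p_\omega$, and these uniform estimates allow one fixed, $D$-determined $\delta$-net to control the entire family.
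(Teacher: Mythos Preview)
Your argument is correct and complete. The John-ellipsoid normalization, the passage to Minkowski functionals, and the $\delta$-net selection all work exactly as you describe; in particular the key chain $p_{\omega_j}(u) \ge p_{\omega_j}(u_j) - |u-u_j| > \tfrac{1}{2\sqrt{D}} - \tfrac{1}{4\sqrt{D}} \ge \tfrac{q(u)}{4\sqrt{D}}$ is clean and yields the conclusion with $C = 4\sqrt{D}$ and $L \le (12\sqrt{D})^D$.

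As for comparison with the paper: the paper does not actually supply a proof of this lemma. It simply asserts that the argument for the ``Lemma on Convex Sets'' in Section~3 of \cite{F4} carries over verbatim despite a slight change in hypotheses. Your proof is therefore more self-contained than what the paper offers. The argument in \cite{F4} is in the same spirit (it too hinges on a John-type normalization), so your route is not fundamentally different from what the cited reference does, but you have the advantage of making the dependence on $D$ completely explicit and avoiding any appeal outside the present document.
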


The proof of the ``Lemma on Convex Sets" in Section 3 of \cite{F4} applies here
and proves Lemma \ref{classical-helly}, even though our present hypotheses differ
slightly from those of \cite{F4}.

We apply Lemma \ref{classical-helly} to prove Theorem \ref{helly-theorem}.

\begin{proof}[Proof of Theorem \ref{helly-theorem}]
Suppose first that each $p_\omega$ is a norm, not just a seminorm. Then
the conclusion of Theorem \ref{helly-theorem} follows by applying Lemma \ref{classical-helly} to the
family of convex sets $K_\omega=\{v\in V: p_\omega(v) \leq 1\}$, ${\omega\in
\Omega}$.

Now  suppose each $p_\omega$  is a seminorm. Let $H(\omega)=\{v\in
V:p_\omega(v)=0\}$, and let $H$ be the intersection of all the $H(\omega)$.
Each $H(\omega)$ is a vector subspace of $V$. Consequently there exist
$\lambda_1,\cdots,\lambda_s \in \Omega$, with $s \leq D$, such that $H=H(\lambda_1) \cap\cdots \cap H(\lambda_s)$.

For $\omega\in \Omega$ and $v\in V$, set
$p^*_\omega(v)=p_{\lambda_1}(v)+\cdots+p_{\lambda_s}(v)+p_{\omega}(v)$.
Then $p^*_\omega$ is a seminorm on $V$, and $p^*_\omega(v)=0$ if and only if $v\in H$.
Regarding each $p^*_\omega$ as a norm on $V/H$, and applying Theorem \ref{helly-theorem} for
collections of norms, we complete the proof of Theorem \ref{helly-theorem}. \end{proof}

\section{Preliminary Reductions}

\label{prelim}

The purpose of this section is to reduce Theorem \ref{main-theorem} to the following:

\begin{lemma}[Main Lemma]
\label{main-proposition} Let $\mathcal{H}=(H(x))_{x\in \mathbb{R}^{2}}$ be a
semialgebraic bundle for $C^m_{loc}(\mathbb{R}^{2},\mathbb{R}^{D})$. Assume $%
\mathcal{H}$ is Glaeser stable. Assume $H(0)=\{0\}$.
Then, for small enough $c>0$, $\mathcal{H}|_{\Gamma (c)}$ has a
semialgebraic section, where $\Gamma (c)=\{(x_{1},x_{2})\in \mathbb{R}^{2}:x_{1}\in \left[ 0,c\right]
,0 \leq x_{2}\leq x_1 \}.$
\end{lemma}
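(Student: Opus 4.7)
The plan is to implement the outline sketched in the introduction: reduce the two-dimensional semialgebraic section problem on the wedge $\Gamma(c)$ to a one-dimensional problem for semialgebraic functions of a single variable, solve the one-dimensional problem via semialgebraic selection (Theorem \ref{semialgebraic-selection}), and lift the solution back to a semialgebraic $C^m_{loc}$ section. Throughout, the condition $H(0)=\{0\}$ is used to force the Taylor jet of any section to vanish at the origin, so growth estimates as $x_1\to 0^+$ can be expressed via Corollary \ref{corollarytolemma_onevariablegrowth} and Lemma \ref{semialgebraic-function-lemma}.

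First, I would stratify the wedge. Applying the semialgebraic Parusi\'nski-type theorems (Theorems \ref{th-log} and \ref{log-derivative-theorm}) and the standard stratification results recalled in Section~3.3 to the semialgebraic family $x\mapsto (f(x),I(x))$, I extract finitely many semialgebraic arcs $\gamma_s=\{(x,\psi_s(x)):x\in[0,c]\}$ (with the $\psi_s$ semialgebraic and, after shrinking $c$, given by convergent Puiseux series) such that off $\gamma_1\cup\cdots\cup\gamma_{s_{\max}}$ the module $I(x)$ varies real-analytically and membership $J_x F\in H(x)$ can be written as finitely many semialgebraic pointwise linear equations on $F$. On each $\gamma_s$, the jet condition $J_x F\in H(x)$ is rewritten via the chain rule in terms of the one-variable data
\[
\xi_{sk}^{l}(x)=\partial_{x_2}^{l}F_k(x,x_2)\big|_{x_2=\psi_s(x)},\qquad 0\le l\le m,\ 1\le k\le D,\ 1\le s\le s_{\max}.
\]
The derivatives of $\psi_s$ that appear are controlled (away from $x=0$ and with respect to powers of $x$) by Corollary \ref{corollarytolemma_onevariablegrowth}, turning the jet membership into a semialgebraic system of linear equations and inequalities in the $\xi_{sk}^l$ and their derivatives. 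This produces the system whose properties (A) and (B) are asserted in the introduction.

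Next, I would establish (A) and (B). For (A), given an actual section $F$, direct differentiation plus the above stratification yields the equations; the essential ingredient is that semialgebraic growth at $x=0$ (Lemma \ref{semialgebraic-function-lemma} applied to various auxiliary functions) matches the pointwise constraints coming from $H(0)=\{0\}$. For (B), given semialgebraic $\xi_{sk}^{l}$ satisfying the system, I build a candidate section by: (i) prescribing, along each $\gamma_s$, the $m$-jets in the $x_2$-direction from the $\xi_{sk}^{l}$; (ii) interpolating between consecutive arcs by a semialgebraic Whitney/Hermite-type construction whose remainder is controlled, using the Puiseux-series structure of the $\psi_s$ and Corollary \ref{corollary3.2}, to be $o(|x|^m)$ near the origin; and (iii) correcting by the off-curve pointwise linear equations, which are semialgebraic and can be enforced using the semialgebraic selection theorem. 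The existence of some (not necessarily semialgebraic) candidate that makes the whole construction consistent is guaranteed by the Glaeser stability hypothesis: restricting $\mathcal{H}$ to the compact wedge $\Gamma(c)$ gives a Glaeser-stable classical bundle, so Theorem \ref{cm-glaeserthm} provides a $C^m$ section of $\mathcal{H}|_{\Gamma(c)}$, and property (A) applied to this section shows that the semialgebraic system for the $\xi_{sk}^l$ is consistent pointwise. Theorem \ref{semialgebraic-selection} then produces a semialgebraic selection, and (B) converts it into the desired semialgebraic section on $\Gamma(c')$.

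The main obstacle I expect is the converse direction (B), specifically the patching of jets across the arcs $\gamma_s$ into a single semialgebraic $C^m_{loc}$ function on $\Gamma(c')$ with vanishing $m$-jet at the origin. Between two consecutive arcs one must interpolate prescribed $x_2$-jets of order $m$ whose coefficients are semialgebraic functions of $x_1$ that may be small only to fractional order in $x_1$ (because the $\psi_s$ are Puiseux), while the side constraints off $\cup_s\gamma_s$ force additional linear relations; harmonizing these requires a careful quantitative use of the Growth Lemma (Lemma \ref{growthlemma}), the logarithmic-derivative estimates of Section \ref{seciton_log-derivative}, and the Helly-type argument of Theorem \ref{helly-theorem} to reduce infinitely many pointwise constraints to finitely many. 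Once this gluing is carried out, multiplying by a semialgebraic cutoff as in Corollary \ref{corollary3.2} yields a global semialgebraic $C^m$ extension, completing the proof of the Main Lemma.
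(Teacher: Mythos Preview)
Your overall strategy matches the paper's: stratify the wedge into finitely many semialgebraic arcs $\gamma_s$ and open strips between them, reduce the section condition to a finite semialgebraic system in the one-variable data $\xi_{sk}^{l}(x)=\partial_{x_2}^{l}F_k(x,\psi_s(x))$, use Glaeser stability plus Theorem~\ref{cm-glaeserthm} to show the system is solvable, apply Theorem~\ref{semialgebraic-selection} to get a semialgebraic solution, and then lift back. So at the level of outline there is no divergence.

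However, your account of \emph{what} the one-variable system is and \emph{why} it captures the section condition has a genuine gap. You write that ``jet membership [becomes] a semialgebraic system of linear equations and inequalities in the $\xi_{sk}^l$ and their derivatives,'' and that off the curves the constraints are ``pointwise linear equations on $F$'' to be ``enforced using the semialgebraic selection theorem.'' This is not how the reduction works, and the issue is not cosmetic. On each open strip $E_s$ the section condition (after Gaussian elimination, Lemma~\ref{gep-lemma}) is $F_{\pi_s i}+\sum_{j>k_s}A_{ij}^s F_{\pi_s j}=\varphi_i^s$ with $|\partial^\alpha A_{ij}^s|,|\partial^\alpha\varphi_i^s|\le C\,[\mathrm{dist}(\cdot,\gamma_{s-1}\cup\gamma_s)]^{-|\alpha|}$. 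Encoding this two-variable PDE-type constraint in terms of the boundary jets $\xi_{sk}^l$ alone requires a nontrivial device: the paper defines, for each boundary arc, an auxiliary semialgebraic bundle over $[0,\delta]\times\{0\}$ whose fiber at $(\bar x,0)$ consists of those $(\vec P,\vec Q)$ for which $y^{|\alpha|-m}\partial^\alpha\{\sum_j A_{ij}P_j+B_i-Q_i\}(\bar x,y)\to 0$ as $y\to 0^+$ (Definition~\ref{bundledef}). Applying Lemma~\ref{semialgebraic-bundle-lemma} to this auxiliary bundle gives the exact linear equations $\lambda_k$, and Theorem~\ref{helly-theorem} applied to the seminorms $|||\cdot|||_{\alpha,i,y}$ on $\mathbb R\cdot H(\bar x,0)$ selects finitely many semialgebraic test heights $y_\sigma(\bar x)$ yielding the $o(1)$ constraints $\mu_l$. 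This is the content of Lemma~\ref{main-lemma}, and without it you cannot pass from ``infinitely many pointwise constraints on the strip'' to a \emph{finite} one-variable system. Your proposal invokes Helly only in passing and does not identify this auxiliary bundle or the test-point mechanism.

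Second, your list of constraints omits the Taylor compatibility conditions linking adjacent arcs,
\[
\xi_{sk}^{l}(x)-\sum_{j=0}^{m-l}\tfrac{1}{j!}\xi_{(s-1)k}^{l+j}(x)\,(\psi_s(x)-\psi_{s-1}(x))^j = o\bigl((\psi_s(x)-\psi_{s-1}(x))^{m-l}\bigr),
\]
which are precisely what makes the patching (Lemma~\ref{lemma-pnc-2}) produce a $C^m$ function across the $\gamma_s$ with zero jet at the origin. These are not recoverable from ``equations and inequalities in the $\xi$'s and their derivatives''; they must be imposed explicitly and then built into the selection functional $\mathcal F$ in Section~\ref{proofmainlemma}. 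Once all three pieces---the exact equations from Lemma~\ref{semialgebraic-bundle-lemma} on the $\gamma_s$, the $\lambda_k,\mu_l$ from the auxiliary bundle on each $E_s^\pm$, and the Taylor compatibility---are in hand, the semialgebraic selection and the cusp-patching lemma finish the proof exactly as you sketch.
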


To deduce Theorem \ref{main-theorem} from
Lemma \ref{main-proposition} we argue as follows.

Suppose we are given a Glaeser stable bundle $\mathcal{H}=(H(x))_{x \in \mathbb{R}^2}$ for $C^m_{loc}(\mathbb{R}^2,\mathbb{R}^D)$ with $H(x) \subset
\mathcal{P}^D$ depending semialgebraically on $x$. Assume $H(0)=\{0\}$.

Let $\Gamma (c)=\{(x_{1},x_{2})\in \mathbb{R}^{2}:x_{1}\in \left[ 0,c\right]
,0 \leq x_{2}\leq x_1 \}$. Theorem \ref{cm-glaeserthm} tells us that $\mathcal{H}|_{\Gamma(c)}$ has a section $F_c$.
The main lemma asserts that for $c$ small enough $\mathcal{H}|_{\Gamma(c)}$ has a
semialgebraic section.

We will cover a full neighborhood of $0$ by rotating wedges of the form $\Gamma(c)$. Using a partition of unity subordinate to the cover and the fact that $H(0)=\{0\}$, we can then patch
together sections of $\mathcal{H}$, and obtain a semialgebraic section over
a full neighborhood of $0$.

We may drop the restriction $H(0)=\{0\}$, because without loss of generality
our given section $F_c$ has jet $0$ at the origin, so we may just cut down $H(0)$
to $\{0\}$. We can also drop the restriction that $\mathcal{H}$ is Glaeser
stable (assuming $\mathcal{H}$ has a section) since we can always pass to the stable Glaeser refinement. Thus, any
semialgebraic bundle having a section has a semialgebraic section over some
neighborhood of $0$. We can use compactness and a partition of unity to
conclude that $\mathcal{H}$ admits a semialgebraic section over any given
compact set.

\begin{lemma}
\label{lemmma-prelim-reduction} Suppose $H(z)$ depends semialgebraically on $%
z\in \mathbb{R}^{2}$. If $\mathcal{H}=(H(z))_{z\in \mathbb{R}^{2}}$ has a
section, then $\mathcal{H}$ has a section $F\in C^m_{loc}(\mathbb{R}^{2},\mathbb{%
R}^{D})$ such that for all $|\alpha |\leq m$, $|\partial ^{\alpha }F(x)|\leq
C(1+|x|)^{K}$ on $\mathbb{R}^{2}$, for some $C$ and $K$.
\end{lemma}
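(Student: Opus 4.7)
The plan is to apply the classical-bundle version of Theorem \ref{cm-glaeserthm} on compact exhaustions $\overline{B(0,R)}$, with the key observation being that the optimal Whitney-type constant in that theorem is a semialgebraic function of $R$ and hence has polynomial growth at infinity. A partition-of-unity patching then produces a global section with the desired polynomial bound.

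First I would replace $\mathcal{H}$ by its stable Glaeser refinement (preserving semialgebraicity and the given section $G$). For any $R \geq 1$, the restriction $\mathcal{H}|_{\overline{B(0,R)}}$ is then a Glaeser-stable \emph{classical} bundle: the refinement condition for the classical bundle at $x_0$ uses only tuples in $\overline{B(0,R)}$, while the refinement for the bundle on $\mathbb{R}^2$ uses tuples anywhere, so the former is a weaker condition, forcing $(\mathcal{H}|_{\overline{B(0,R)}})'(x_0) = \mathcal{H}(x_0)$.

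Next, let $M^{\ast}(R)$ be the infimum of $M \geq 0$ for which the Whitney-type hypothesis in Theorem \ref{cm-glaeserthm} holds for $\mathcal{H}|_{\overline{B(0,R)}}$ with constant $M$. Taking $P_i = J_{x_i}G$ and using the standard Taylor remainder estimate $|\partial^\alpha(J_{x_i}G - J_{x_j}G)(x_j)| \leq C_m\|G\|_{C^m(\overline{B(0,R)})}|x_i - x_j|^{m-|\alpha|}$ gives $M^{\ast}(R) \leq C_m\|G\|_{C^m(\overline{B(0,R)})} < \infty$ for every $R$. Since $H(x)$ depends semialgebraically on $x$, the set $\{(R, M) : M^{\ast}(R) \leq M\}$ is defined by a first-order formula over semialgebraic relations (universal quantifiers over $k^{\#}$-tuples in $\overline{B(0,R)}$ followed by existential quantifiers over corresponding polynomials in $H(x_i)$), so by Tarski--Seidenberg it is semialgebraic; hence $R \mapsto M^{\ast}(R)$ is a semialgebraic function of one variable, finite on $[1,\infty)$. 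The polynomial equation $P(M^{\ast}(R), R) \equiv 0$ furnished by the Puiseux lemma of Section 3.2 then yields $M^{\ast}(R) \leq CR^K$ for constants $C, K$ and all $R \geq 1$. Applying Theorem \ref{cm-glaeserthm} produces, for each $R \geq 1$, a function $F_R \in C^m(\mathbb{R}^2,\mathbb{R}^D)$ with $\|F_R\|_{C^m(\mathbb{R}^2,\mathbb{R}^D)} \leq C'R^K$ and $J_xF_R \in H(x)$ for all $x \in \overline{B(0,R)}$.

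Finally I would patch via a partition of unity. Choose smooth $\{\phi_n\}_{n \geq 0}$ with $\phi_n$ supported in the annulus $\{2^{n-1} \leq |x| \leq 2^{n+1}\}$, $\sum_n \phi_n \equiv 1$ on $\mathbb{R}^2$, and $|\partial^\alpha \phi_n| \leq C 2^{-n|\alpha|}$, and set $F = \sum_n \phi_n F_{2^{n+1}}$. At each $x$ only finitely many terms contribute, and where $\phi_n(x) \neq 0$ the function $F_{2^{n+1}}$ is a section at $x$; writing $J_xF_{2^{n+1}} = f(x) + \iota_n$ with $\iota_n \in I(x)$ and using $\sum_n J_x\phi_n = J_x(1) = 1$ together with the $\mathcal{R}_x$-submodule property of $I(x)$, one obtains $J_xF = f(x) + \sum_n J_x\phi_n \odot_x \iota_n \in H(x)$. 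The Leibniz rule combined with the cutoff bounds yields $|\partial^\alpha F(x)| \leq C''(1+|x|)^K$ globally. The main obstacle is the semialgebraicity of $M^{\ast}(R)$, which requires careful application of quantifier elimination to the nested universal/existential quantifiers defining the Whitney hypothesis; once semialgebraicity and the resulting polynomial growth at infinity are in hand, Theorem \ref{cm-glaeserthm} and the partition-of-unity patching are essentially routine.
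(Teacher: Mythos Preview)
Your proposal is correct and follows essentially the same route as the paper: pass to the stable Glaeser refinement, invoke Theorem \ref{cm-glaeserthm} on the balls $\overline{B(0,R)}$ to get sections $F_R$ with $\|F_R\|_{C^m}\le C R^K$ (using that the relevant Whitney constant depends semialgebraically on $R$), and patch the $F_{2^k}$ by a dyadic partition of unity. You have simply supplied the details that the paper leaves implicit, namely the verification that the restricted bundle is Glaeser stable, the Tarski--Seidenberg argument for the semialgebraicity of $M^\ast(R)$, and the jet computation showing the patched function is still a section.
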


\begin{proof}
To prove this lemma, we may assume that $\mathcal{H}$ is Glaeser stable.

Taking $E_{R}=\left\{ x\in \mathbb{R}^{2}:\left\vert x\right\vert \leq
R\right\} $ with $R\geq 1$, and applying Theorem \ref{cm-glaeserthm}, we obtain a
section $F_{R}$ of $\mathcal{H}|_{E_{R}}$, with $\left\vert \left\vert
F_{R}\right\vert \right\vert _{C^m}\leq C\left( R\right) ^{K}$, because
the \textquotedblleft $M$ \textquotedblright\ in the result quoted above
applied to $\mathcal{H}|_{E_{R}}$ can be taken to depend semialgebraically on $R$. (That's
where we use the fact that the bundle $\mathcal{H}$ is semialgebraic.)

We can now easily use a partition of unity to patch together $F_{2^{k}}$, $%
k=1,2,3,\cdots $, into a section $F$ as in the conclusion of Lemma \ref%
{lemmma-prelim-reduction}.
\end{proof}

Fix $K$ as in the conclusion of Lemma \ref{lemmma-prelim-reduction}. Let $\Phi :\text{ Open Disc }\Delta \rightarrow \mathbb{R}^{2}$ be a
semialgebraic diffeomorphism, for example, $\Phi(x) = \frac{x}{1-|x|^2}$. Let $\theta (x)>0$ be a semialgebraic
function on $\mathbb{R}^{2}$ that tends to zero so rapidly that
\begin{equation*}
\partial ^{\alpha }[(\theta F)\circ \Phi ](y)\rightarrow 0\text{, for all }%
|\alpha |\leq m\text{ as }y\rightarrow \partial \Delta ,
\end{equation*}%
whenever $|\partial ^{\alpha }F(x)|\leq C(1+|x|)^{K}$ on $\mathbb{R}^{2}$, $%
|\alpha |\leq m$.

We can now form a bundle $\mathcal{H}^{*}$ as follows: For $x$ in $\Delta$, the fiber $H^{*}(x)$ consists of all $J_x((\theta F)\circ \Phi)$ for sections $F$ of the bundle $\mathcal{H}$.

The fibers of $\mathcal{H}^*$ over points not in $\Delta$
are $\{0\}$.

Then $\mathcal{H}^*$ is a semialgebraic bundle admitting a section.

We have seen that semialgebraic bundles with sections have
semialgebraic sections over any compact set. In particular, $\mathcal{H}^*$ has a semialgebraic section $\mathcal{F}$
over $\Delta^\text{closure}$. Then $\frac{\mathcal{F}\circ \Phi^{-1}(x)}{%
\theta(x)}$ is a semialgebraic section of $\mathcal{H}$ over $\mathbb{R}^2$.

Consequently, we can deduce Theorem \ref{main-theorem} from Lemma \ref{main-proposition}.

The rest of the paper is devoted to the proof of Lemma \ref{main-proposition}.

\section{Characterization of Sections}\label{characterization-section-section}

\subsection{Semialgebraic Bundles}

\label{section-semialgebraic-bundle}

Fix $U \subset \mathbb{R}^n$ open, semialgebraic. Fix $\psi: U \rightarrow
\mathbb{R}^k$ Nash. Let $\hat{\psi}(x) =
(x,\psi(x)) \in \mathbb{R}^n \times \mathbb{R}^k$ for $x \in U$. We set $%
\hat{U} = \hat{\psi}(U)$. Let $\mathcal{P}$ denote the vector space of polynomials
of degree at most $m$ on $\mathbb{R}^n \times \mathbb{R}^k$. We write $z=(x,y)$ to denote a point of $\R^n \times \R^k$. We write $%
\mathcal{R}_z$ to denote the ring
obtained from $\mathcal{P}$ by multiplication of $m$-jets at $z$. We fix a
bundle $\mathcal{H}= (H(z))_{z \in \hat{U}}$, where, for each $z =
\hat{\psi}(x) \in \hat{U}$ we have $H(z)=f^x+I(x)$, $f^x \in \mathcal{P}^D$, $%
I(x)$ an $\mathcal{R}_{\hat{\psi}(x)}$-submodule of $\mathcal{P}^D$. (We point out that $\mathcal{H}$ is a bundle, not a classical bundle, see Remark \ref{classicalbundle}.)

We suppose $\mathcal{H}$ is Glaeser stable. We assume that $H(z)$ depends
semialgebraically on $z \in \hat{U}$. (We sometimes abuse notion by writing $%
I(z)$ for $I(x)$, where $z =\hat{\psi}(x)$.)

Under the above assumptions and definitions, we will prove the following
result.

\begin{lemma}
\label{semialgebraic-bundle-lemma}

There exist a semialgebraic set $U_{\text{bad}} \subset \mathbb{R}^n$ of
dimension less than $n$; Nash functions $%
A_{j\beta}^i,G^i$ on $U \setminus U_{\text{bad}}$ ($i=1,\cdots, i_{\max}, j
=1,\cdots, D, \beta$ a multiindex of order $\leq m$ for $\mathbb{R}^k$) with
the following property. Let $B \subset U \setminus U_{\text{bad}}$ be a
closed ball. Set $\hat{B}=\hat{\psi}(B)$. Let $F = (F_1, \cdots, F_D) \in
C^m_{loc} (\mathbb{R}^n \times \mathbb{R}^k, \mathbb{R}^D)$. Then $F$ is a section
of $\mathcal{H}|_{\hat{B}}$ if and only if $\sum_{|\beta| \leq
m}\sum_{j=1}^D A_{j\beta}^i(x) \cdot (\partial_y^\beta F_j(x,\psi(x)))=G^i (x)$ for all
$x \in B$ (each $i$).
\end{lemma}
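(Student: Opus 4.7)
My plan is to proceed in three stages: (1) parametrize the bundle Nash-ly away from a lower-dimensional exceptional set, (2) translate the jet constraints into a finite system of Nash-linear ODEs on $g_{j\beta}(x) := \partial_y^\beta F_j(x,\psi(x))$ via the chain rule, and (3) reduce that ODE system to value-only equations using a differential elimination procedure that crucially relies on Glaeser stability and the module structure of $I(\hat\psi(x))$.

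First I would invoke standard structure theory for semialgebraic sets (Chapter 8 of \cite{coste-book}) to produce a semialgebraic $U_{\text{bad}} \subset U$ of dimension less than $n$ such that on $U_0 := U \setminus U_{\text{bad}}$ the integer $\dim I(\hat\psi(x))$ is constant, and I would select, Nash on $U_0$, a basepoint $f^x \in H(\hat\psi(x))$ together with a basis $\lambda^1(x),\dots,\lambda^{i_{\max}}(x)$ of the annihilator $\mathrm{Ann}(I(\hat\psi(x))) \subset (\mathcal{P}^D)^*$. The jet-constraint $J_{\hat\psi(x)} F \in H(\hat\psi(x))$ then reads $\lambda^i(x)(J_{\hat\psi(x)} F - f^x) = 0$ for each $i$. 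Each $\lambda^i(x)$ is a Nash-linear combination of the coordinate functionals $P \mapsto \partial_X^\alpha \partial_Y^\beta P_j(\hat\psi(x))$, so this becomes $\sum b^i_{j\alpha\beta}(x) \partial_x^\alpha \partial_y^\beta F_j(\hat\psi(x)) = c^i(x)$ with Nash coefficients. The iterated chain rule furnishes an invertible, triangular-in-$|\alpha|$ Nash change of variables between the mixed partials $(\partial_x^{\alpha'}\partial_y^{\beta'} F_j)(\hat\psi(x))$ and the derivatives $\partial_x^\alpha g_{j\beta}(x)$ (for $|\alpha|+|\beta| \leq m$), converting the jet-constraint into a finite system of Nash-linear ODEs of order at most $m$:
\[
\sum_{j,\,|\alpha|+|\beta|\leq m} \tilde b^i_{j\alpha\beta}(x)\,\partial_x^\alpha g_{j\beta}(x) = c^i(x),\qquad x \in B,\ i=1,\ldots,i_{\max}.
\]

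The main obstacle will be reducing this ODE system to value-only equations $\sum A^i_{j\beta}(x) g_{j\beta}(x) = G^i(x)$. My approach is to run a Nash differential elimination: iteratively differentiate the current equations along $x$, take Nash-linear combinations, and solve for the highest-order derivatives of $g$, until only value equations survive; on Noetherian grounds the process terminates after finitely many steps (after possibly enlarging $U_{\text{bad}}$ to absorb the loci where Nash ranks drop). The "only if" direction is then immediate by construction. For the "if" direction, given $g \in C^m$ satisfying the value equations on the open set $B$, differentiating along $B$ reproduces the eliminated derivative equations, and reversing the elimination recovers the original ODE system, so $J_{\hat\psi(x)} F \in H(\hat\psi(x))$ on $B$. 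The subtle point I expect to require the most care is verifying the consistency of this elimination---that no spurious contradictions arise and that the value residue truly captures all jet-level content. This is where I would invoke Glaeser stability and the $\mathcal{R}_{\hat\psi(x)}$-module structure of $I(\hat\psi(x))$ (the former via Theorem \ref{cm-glaeserthm} to realize each admissible $y$-jet as arising from a genuine section, the latter to match derived derivative constraints to elements of $I(\hat\psi(x))$), thereby certifying that the value equations produced are exactly what the lemma asserts.
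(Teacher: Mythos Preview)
Your stages (1) and (2) are essentially what the paper does: it writes the section condition using the tangent frame $X_l = \partial_{x_l} + \sum_p (\partial_{x_l}\psi_p)\partial_{y_p}$ together with $\partial_{y_1},\dots,\partial_{y_k}$, obtaining equations of the form
\[
\sum_{|\alpha|\le s} X^\alpha\Bigl\{\sum_{|\beta|\le m-|\alpha|}\sum_j A^i_{j\alpha\beta}(x)\,\partial_y^\beta F_j\Bigr\}=0 \quad\text{on }\hat B,
\]
which is exactly your ODE system in the variables $g_{j\beta}$. The gap is in stage (3). ``Nash differential elimination'' is not a procedure that, on its own, reduces an ODE system to value-only equations: a system like $u'(x)=0$ has no equivalent value-only reformulation. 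Differentiating the equations raises the order; taking linear combinations can only lower it if the top-order coefficients happen to be linearly dependent, and nothing in your outline forces that. Invoking Noetherianity guarantees termination of whatever process you run, but not that the terminal system is zeroth-order. You correctly flag that the module structure must enter here, yet you never say \emph{how} it converts a genuinely first-or-higher-order constraint into a pointwise one.

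The paper's mechanism is concrete and is the missing idea. Fix $x_0\in B$ and $|\alpha_0|=s$, and multiply the section $F$ by a cutoff $\theta$ with $J_{\hat\psi(x_0)}\theta=(x-x_0)^{\alpha_0}$. Because $I(z)$ is an $\mathcal R_z$-module, $\theta F$ is again a section of $(I(z))_{z\in\hat B}$; applying the order-$s$ system to $\theta F$ at $x_0$ kills every term except the one where all $s$ copies of $X$ land on $\theta$, leaving precisely the \emph{pointwise} equation $\sum_{|\beta|\le m-s}\sum_j A^i_{j\alpha_0\beta}(x_0)\,\partial_y^\beta F_j(\hat\psi(x_0))=0$. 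Since $x_0$ and $\alpha_0$ were arbitrary, these value equations hold on all of $B$; differentiating them tangentially (the $X_l$ are tangent to $\hat B$) reproduces the top-order terms of the original system, which can then be subtracted off to yield an equivalent system of order $s-1$. Induction gives order $0$. Glaeser stability is used afterwards, and only for the inhomogeneous part: it guarantees (via Theorem~\ref{cm-glaeserthm}) a section with $J_{\hat\psi(x_0)}\tilde F=f^{x_0}$, so that the right-hand sides $G^i(x)$ are well-defined and independent of the section, and so that the ``if'' direction goes through. Your proposal conflates this role of Glaeser stability with the order-reduction step; they are separate, and the order reduction rests entirely on the multiply-by-$(x-x_0)^{\alpha_0}$ trick.
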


\begin{proof}
We may suppose that $f^x$ and $I(x)$ depend semialgebraically on $x \in U$.
We write $f^x= (f_1^x,\cdots, f_D^x)$ and $\psi(x)=(\psi_1(x),\cdots,%
\psi_k(x))\quad (x \in U)$.

For $l=1, \cdots, n$, we introduce the vector field
\begin{equation*}
X_l = \frac{\partial}{\partial x_l} + \sum_{p=1}^k \frac{\partial \psi_p(x)}{%
\partial x_l} \frac{\partial}{\partial y_p} \text{on } U \times \mathbb{R}^k.
\end{equation*}

On $U \times \mathbb{R}^k$, then $X_l$ are Nash,
and $[X_l,X_{l^{\prime }}]=0$. For $\alpha= (\alpha_1,\cdots, \alpha_n)$, we
write $X^\alpha = X_1^{\alpha_1} \cdots X_n^{\alpha_n}$.

The $X_{1},\cdots ,X_{n}$, $\frac{\partial }{\partial y_{1}},\cdots ,\frac{%
\partial }{\partial y_{k}}$ form a frame on $U\times \mathbb{R}^{k}$.
Because $I\left( x\right) $ depends semialgebraically on $x\in U$, we may
express

\begin{itemize}
\item[\refstepcounter{equation}\text{(\theequation)}\label{sb-1}] $I\left(
x\right) =\left\{ \left( P_{1},\cdots ,P_{D}\right) \in \mathcal{P}%
^{D}:\left. \sum_{\substack{ \left\vert \alpha \right\vert +\left\vert \beta
\right\vert \leq m  \\ j=1,\cdots ,D}}\tilde{A}_{j\alpha \beta }^{i}\left(
x\right) \left( X^{\alpha }\partial _{y}^{\beta }P_{j}\right) \right\vert _{%
\tilde{\psi}\left( x\right) }=0\text{, for }i=1,\cdots ,i_{\max }\right\} $
for semialgebraic $\tilde{A}_{j\alpha \beta }^{i}$ on $U$.
\end{itemize}

We take $U_{\text{bad}}^{1}$ to be the union of the singular sets of the $%
\tilde{A}_{j\alpha \beta }^{i}$. Then $U_{\text{bad}}^{1}$ is a
semialgebraic set of dimension $<n$ in $\mathbb{R}^{n}$, and the $\tilde{A}%
_{j\alpha \beta }^{i}$ are real-analytic on $U\setminus U_{\text{bad}}^{1}$.

We may therefore rewrite the equation in \eqref{sb-1} in the form
\begin{equation*}
\left. \sum_{\substack{ \left\vert \alpha \right\vert +\left\vert \beta
\right\vert \leq m  \\ j=1,\cdots ,D}}\left( X^{\alpha }\left\{ A_{j\alpha
\beta }^{i}\left( x\right) \partial _{y}^{\beta }P_{j}\right\} \right)
\right\vert _{\hat{\psi}\left( x\right) }=0\text{.}
\end{equation*}%
The $A_{j\alpha \beta }^{i}$ are Nash on $%
U\setminus U_{\text{bad}}^{1}$. Thus, for any closed ball $B\subset
U\setminus U_{\text{bad}}^{1}$ the following holds. (We set $\hat{B}=\hat{%
\psi}\left( B\right) $.)

A given $F=\left( F_{1},\cdots ,F_{D}\right) \in C^m_{loc}\left( \mathbb{R}^{n}\times
\mathbb{R}^{k},\mathbb{R}^{D}\right) $ is a section of $\left( I\left(
z\right) \right) _{z\in \hat{B}}$ if and only if
\begin{equation*}
\sum_{\left\vert \alpha \right\vert \leq m}X^{\alpha }\left\{
\sum_{\left\vert \beta \right\vert \leq m-\left\vert \alpha \right\vert
}A_{j\alpha \beta }^{i}\left( x\right) \partial _{y}^{\beta }F_{j}\left(
x,y\right) \right\} =0\text{ on }\hat{B}\text{ for all }i\text{.}
\end{equation*}

We look for integers $s\geq 0$ for which there exist Nash functions $A_{j\alpha \beta }^{i}$ on $U\setminus U_{\text{bad}%
}^{1}$ with the following property (\textquotedblleft Property $\prod \left(
s\right) $"):

Let $B\subset U\setminus U_{\text{bad}}^{1}$ be a closed ball; set $\hat{B}=%
\hat{\psi}\left( B\right) $. Then $\left( F_{1},\cdots ,F_{D}\right) \in
C^m_{loc}\left( \mathbb{R}^{n}\times \mathbb{R}^{k},\mathbb{R}^{D}\right) $ is a
section of $\left( I\left( z\right) \right) _{z\in \hat{B}}$ if and only if
\begin{equation}
\sum_{\left\vert \alpha \right\vert \leq s}X^{\alpha }\left\{
\sum_{\left\vert \beta \right\vert \leq m-\left\vert \alpha \right\vert
}\sum_{j=1}^{D}A_{j\alpha \beta }^{i}\left( x\right) \partial _{y}^{\beta
}F_{j}\left( x,y\right) \right\} =0\text{ on }\hat{B}\text{ for all }i\text{.%
}  \label{sb2}
\end{equation}

We have seen that we can achieve Property $\prod \left( m\right) $.

\begin{claim}
\label{sb-claim} Let $s$ be the smallest possible integer $\geq 0$ for which
we can achieve Property $\prod \left( s\right) $, and let $A_{j\alpha \beta
}^{i}$ be as in Property $\prod \left( s\right) $. Then $s=0$. In other
words, Property $\prod (0)$ holds.
\end{claim}

\begin{proof}[Proof of Claim \protect\ref{sb-claim}]
Assuming $s \geq 1$, we will achieve Property $\prod (s-1)$, contradicting
the fact that $s$ is as small as possible.

Fix $B \subset U \setminus U_{\text{bad}}^1$ a closed ball, and let $%
(F_1,\cdots,F_D) \in C^m_{loc}(\mathbb{R}^n \times \mathbb{R}^k, \mathbb{R}^D)$ be
a section of $(I(z))_{z \in \hat{B}}$. (As always, $\hat{B}=\psi(B)$.) Fix $%
x_0 \in B$ and fix a multiindex $\alpha_0$ with $|\alpha_0| =s$. For $%
j=1,\cdots, D$, define functions on $\mathbb{R}^n \times \mathbb{R}^k$ by
setting $F_j^{\#}(z) = \theta \cdot F_j (z)$ where $\theta \in C_0^\infty(%
\mathbb{R}^n \times \mathbb{R}^k)$ with jet $(J_{\hat{\psi}(x_0)}\theta
)(x,y) = (x-x_0)^{\alpha_0}$.

Then $(F_1^{\#}, \cdots, F_D^{\#}) \in C^m_{loc}(\mathbb{R}^n \times \mathbb{R}^k,
\mathbb{R}^D)$ is a section of $(I(z))_{z \in \hat{B}}$ because each $I(z)$
is an $\mathcal{R}_z$-submodule of $\mathcal{R}_z^D$.

Applying Property $\prod (s)$ to $(F_{1}^{\#},\cdots ,F_{D}^{\#})$, we learn
that%
\begin{equation*}
\left. \sum_{\left\vert \beta \right\vert \leq m-\left\vert \alpha
_{0}\right\vert }\sum_{j=1}^{D}A_{j\alpha _{0}\beta }^{i}\left( x_{0}\right)
\left( \partial _{y}^{\beta }F_{j}\right) \right\vert _{\hat{\psi}\left(
x_{0}\right) }=0\text{ }\left( \text{all }i\right) \text{.}
\end{equation*}%
This holds for all $x_0$ and for all $\left\vert \alpha _{0}\right\vert =s$.
Thus, if $\left( F_{1},\cdots ,F_{D}\right) \in C^m_{loc}\left( \mathbb{R}%
^{n}\times \mathbb{R}^{k},\mathbb{R}^{D}\right) $ is a section of $\left(
I\left( z\right) \right) _{z\in \hat{B}}$, then
\begin{equation}
\sum_{\left\vert \beta \right\vert \leq m-\left\vert \alpha \right\vert
}\sum_{j=1}^{D}A_{j\alpha \beta }^{i}\left( x\right) \partial _{y}^{\beta
}F_{j}\left( x,y\right) =0  \label{sb3}
\end{equation}%
on $\hat{B}$ for all $\left\vert \alpha \right\vert =s$ and for all $i$.
Because the $X_{j}$ are tangent to $\hat{B}$, it follows from \eqref{sb3}
that
\begin{equation}
X^{\alpha }\left\{ \sum_{\left\vert \beta \right\vert \leq m-\left\vert
\alpha \right\vert }\sum_{j=1}^{D}A_{j\alpha \beta }^{i}\left( x\right)
\partial _{y}^{\beta }F_{j}\left( x,y\right) \right\} =0  \label{sb4}
\end{equation}%
on $\hat{B}$ for all $\left\vert \alpha \right\vert =s$ and for all $i$.
From \eqref{sb2} and \eqref{sb4}, we conclude that
\begin{equation}
\sum_{\left\vert \alpha \right\vert \leq s-1}X^{\alpha }\left\{
\sum_{\left\vert \beta \right\vert \leq m-\left\vert \alpha \right\vert
}\sum_{j=1}^{D}A_{j\alpha \beta }^{i}\left( x\right) \partial _{y}^{\beta
}F_{j}\left( x,y\right) \right\} =0  \label{sb5}
\end{equation}%
on $\hat{B}$ for all $i$. Thus, any section of $\left( I\left( z\right)
\right) _{z\in \hat{B}}$ satisfies \eqref{sb3} and \eqref{sb5}. Conversely,
suppose $\left( F_{1},\cdots ,F_{D}\right) \in C^m_{loc}\left( \mathbb{R}^{k}%
\mathbb{\times \mathbb{R}}^{k},\mathbb{R}^{D}\right) $ satisfies \eqref{sb3}
and \eqref{sb5}. Then, because \eqref{sb3} implies \eqref{sb4}, it follows
that \eqref{sb2} holds, and consequently $\left( F_{1},\cdots ,F_{D}\right) $
is a section of $\left( I\left( z\right) \right) _{z\in \hat{B}}$. Thus, a
given $\left( F_{1},\cdots ,F_{D}\right) \in C^m_{loc}\left( \mathbb{R}%
^{n}\times \mathbb{R}^{k},\mathbb{R}^{D}\right) $ is a section of $\left(
I\left( z\right) \right) _{z\in \hat{B}}$ if and only if \eqref{sb3} and %
\eqref{sb5} hold. If $s\geq 1,$ this implies that we have achieved Property $%
\prod \left( s-1\right) $, contradicting the minimal character of $s$, and
establishing Claim \ref{sb-claim}.
\end{proof}

We return to the proof of Lemma \ref{semialgebraic-bundle-lemma}. Because
Property $\prod (s)$ holds with $s=0$, there exist Nash functions $A_{j\beta }^{i}$ on $U\setminus U_{\text{bad}}^{1}$%
, for which the following (\textquotedblleft Property $\prod^{\ast }$")
holds:

Let $B\subset U\setminus U_{\text{bad}}^{1}$ be a closed ball. Set $\hat{B}=%
\hat{\psi}\left( B\right) $. Then a given $\left( F_{1},\cdots ,F_{D}\right)
\in C^m_{loc}\left( \mathbb{R}^{n}\times \mathbb{R}^{k},\mathbb{R}^{D}\right) $
is a section of $\left( I\left( z\right) \right) _{z\in \hat{B}}$ if and
only if
\begin{equation}
\sum_{\left\vert \beta \right\vert \leq m}\sum_{j=1}^{D}A_{j\beta
}^{i}\left( x\right) \partial _{y}^{\beta }F_{j}\left( x,y\right) =0\text{
on }\hat{B}\text{ (all }i\text{)}.  \label{sb6}
\end{equation}

We fix $A_{j\beta }^{i}$ as above.

We now return to our bundle $\mathcal{H}=\left( f^{z}+I\left( z\right)
\right) _{z\in \hat{U}}$.

(We abuse notation by writing $f^{z}$ for $f^{x}$ where $z=\hat{\psi}\left(
x\right) $.)

Let $B\subset U\setminus U_{\text{bad}}^{1}$ be a closed ball, and let $\hat{%
B}=\hat{\psi}\left( B\right) $. Let $\left( F_{1},\cdots ,F_{D}\right) $ and $%
\left( \tilde{F}_{1},\cdots ,\tilde{F}_{D}\right) \in C^m_{loc}\left( \mathbb{R}%
^{n}\times \mathbb{R}^{k},\mathbb{R}^{D}\right) $ be any two sections of $%
\mathcal{H}|_{\hat{B}}$.

Then $\left( F_{1}-\tilde{F}_{1},\cdots ,F_{D}-\tilde{F}_{D}\right) $ is a
section of $\left( I\left( z\right) \right) _{z\in \hat{B}}$, and therefore
by \eqref{sb6}, we have
\begin{equation}
\sum_{\substack{ \left\vert \beta \right\vert \leq m  \\ j=1,\cdots ,D}}%
A_{j\beta }^{i}\left( x\right) \partial _{y}^{\beta }F_{j}\left( x,y\right)
=\sum_{\substack{ \left\vert \beta \right\vert \leq m  \\ j=1,\cdots ,D}}%
A_{j\beta }^{i}\left( x\right) \partial _{y}^{\beta }\tilde{F}_{j}\left(
x,y\right) \text{ on }\hat{B}\text{ for all }i\text{.}  \label{sb7}
\end{equation}%
Moreover, given $x_{0}\in B$, we can take our section $\left( \tilde{F}%
_{1},\cdots ,\tilde{F}_{D}\right) $ above to satisfy
\begin{equation*}
J_{\hat{\psi}\left( x_{0}\right) }\tilde{F}_{j}=f_{j}^{x_{0}}\text{ }\left(
j=1,\cdots ,D\right) \text{,}
\end{equation*}%
because $\left( f_{1}^{x_{0}},\cdots ,f_{D}^{x_{0}}\right) \in H\left( \hat{%
\psi}\left( x_{0}\right) \right) $ and $\mathcal{H}|_{\hat{B}}$ is Glaeser
stable and has nonempty fibers. (See Theorem \ref{cm-glaeserthm}.) Therefore, \eqref{sb7} implies that
\begin{equation}
\sum_{\left\vert \beta \right\vert \leq m}\sum_{j=1}^{D}A_{j\beta
}^{i}\left( x\right) \partial _{y}^{\beta }F_{j}\left( x,y\right)
=G^{i}\left( x\right) \text{ }  \label{sb8}
\end{equation}%
on $\hat{B}$ for each $i$, where
\begin{equation*}
G^{i}\left( x\right) =\sum_{\left\vert \beta \right\vert \leq
m}\sum_{j=1}^{D}A_{j\beta }^{i}\left( x\right) \left( \partial _{y}^{\beta
}f^{x}\right) |_{\hat{\psi}\left( x\right) }\text{\quad }\left( x\in
U\setminus U_{\text{bad}}^{1}\right) \text{.}
\end{equation*}%
Clearly, $G^{i}\left( x\right) $ is a semialgebraic function on $U\setminus
U_{\text{bad}}^{1}$, and it is independent of the ball $B$ in the above
discussion.

Thus, we have seen that any section $\left( F_{1},\cdots ,F_{D}\right) $ of $%
\mathcal{H}|_{\hat{B}}$ must satisfy \eqref{sb8}.

Conversely, suppose $\left( F_{1},\cdots ,F_{D}\right) \in C^m_{loc}\left(
\mathbb{R}^{n}\times \mathbb{R}^{k},\mathbb{R}^{D}\right) $ satisfies %
\eqref{sb8}. Let $\left( \tilde{F}_{1},\cdots ,\tilde{F}_{D}\right) \in
C^m_{loc}\left( \mathbb{R}^{n}\times \mathbb{R}^{k},\mathbb{R}^{D}\right) $ be a
section of $\mathcal{H}|_{\hat{B}}$. (We know that a section exists because $%
\mathcal{H}|_{\hat{B}}$ is Glaeser stable and has nonempty fibers.) We know
that $\left( \tilde{F}_{1},\cdots ,\tilde{F}_{D}\right) $ satisfies %
\eqref{sb8}, hence
\begin{equation*}
\sum_{\left\vert \beta \right\vert \leq m}\sum_{j=1}^{D}A_{j\beta
}^{i}\left( x\right) \partial _{y}^{\beta }\left[ F_{j}-\tilde{F}_{j}\right]
\left( x,y\right) =0
\end{equation*}%
on $\hat{B}$ for each $i$.

Recalling Property $\prod^{\ast }$, we now see that $\left( F_{1}-\tilde{F}%
_{1},\cdots ,F_{D}-\tilde{F}_{D}\right) $ is a section of $\left( I\left(
z\right) \right) _{z\in \hat{B}}.$ Because $\left( \tilde{F}_{1},\cdots ,%
\tilde{F}_{D}\right) \in C^m_{loc}\left( \mathbb{R}^{n}\times \mathbb{R}^{k},%
\mathbb{R}^{D}\right) $ is a section of $\mathcal{H}|_{\hat{B}}=\left(
f^{z}+I\left( z\right) \right) _{z\in \hat{B}}$, we conclude that $\left(
F_{1},\cdots ,F_{D}\right) $ is a section of $\mathcal{H}|_{\hat{B}}$. Thus,
if $\left( F_{1},\cdots ,F_{D}\right) \in C^m_{loc}\left( \mathbb{R}^{n}\times
\mathbb{R}^{k},\mathbb{R}^{D}\right) $ satisfies \eqref{sb8}, then it is a
section of $\mathcal{H}|_{\hat{B}}$.

We have now seen that a given $\left( F_{1},\cdots ,F_{D}\right) \in
C^m_{loc}\left( \mathbb{R}^{n}\times \mathbb{R}^{k},\mathbb{R}^{D}\right) $ is a
section of $\mathcal{H}|_{\hat{B}}$ if and only if \eqref{sb8} holds.

Thus, all the conclusions of Lemma \ref{semialgebraic-bundle-lemma} hold,
except that perhaps the $G^{i}$ are not real-analytic.

We set $U_{\text{bad}}^{2}=$union of all the singular sets of the
semialgebraic functions $G^{i}$. That's a semialgebraic set of dimension $<n$
in $\mathbb{R}^{n}$.

We take $U_{\text{bad}}=U_{\text{bad}}^{1}\cup U_{\text{bad}}^{2}$, a
semialgebraic set of dimension $<n$ in $\mathbb{R}^{n}$.

The functions $A_{j\beta }^{i}$ and $G^{i}$ are Nash on $U\setminus U_{\text{bad}}$.

If $B\subset U\setminus U_{\text{bad}}$ is a closed ball and $\hat{B}=\psi
\left( B\right) $, then a given $\left( F_{1},\cdots ,F_{D}\right) \in
C^m_{loc}\left( \mathbb{R}^{n}\times \mathbb{R}^{k},\mathbb{R}^{D}\right) $ is a
section of $\mathcal{H}|_{\hat{B}}$ if and only if
\begin{equation*}
\sum_{\left\vert \beta \right\vert \leq m}\sum_{j=1}^{D}A_{j\beta
}^{i}\left( x\right) \left( \partial _{y}^{\beta }F_{j}\right) |_{\hat{\psi}%
\left( x\right) }=G^{i}\left( x\right)
\end{equation*}%
on $B$ for each $i$.

This completes the proof of Lemma \ref{semialgebraic-bundle-lemma}$.$
\end{proof}

\begin{remark}\label{remarklemma5.1}
Lemma \ref{semialgebraic-bundle-lemma} and its proof hold also for $k=0$. In that case, 
$\hat{\psi}$ is the identity map and there are no $y$-variables, hence no 
$y$-derivatives in the conclusion of Lemma \ref{semialgebraic-bundle-lemma}.
\end{remark}

\begin{corollary} Let $\mathcal{H}, U, \psi, \cdots$ be as in Lemma \ref{semialgebraic-bundle-lemma}.
Let $\left( F_{1},\cdots ,F_{D}\right) \in C^m_{loc}\left( \mathbb{R}^{n}\times
\mathbb{R}^{k},\mathbb{R}^{D}\right) .$ Then $\left( F_{1},\cdots
,F_{D}\right) $ is a section of $\mathcal{H}|_{\hat{U}\setminus \hat{\psi}%
\left( U_{\text{bad}}\right) }$ if and only if
\begin{equation*}
\sum_{\left\vert \beta \right\vert \leq m}\sum_{j=1}^{D}A_{j\beta
}^{i}\left( x\right) \partial _{y}^{\beta }F_{j}\left( x,y\right)
=G^{i}\left( x\right)
\end{equation*}%
on $\hat{U}\setminus \hat{\psi}\left( U_{\text{bad}}\right) $, for all $i$.
\end{corollary}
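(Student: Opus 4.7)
The plan is to deduce the corollary from Lemma \ref{semialgebraic-bundle-lemma} by a straightforward localization argument, exploiting the fact that both conditions in the conclusion (being a section, and satisfying the linear equation) are pointwise in the base variable $x \in U \setminus U_{\text{bad}}$.

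First I would observe that $U \setminus U_{\text{bad}}$ is an open subset of $\mathbb{R}^n$, since $U$ is open and $U_{\text{bad}}$ is a semialgebraic set of dimension strictly less than $n$ (in fact one checks it is closed in $U$, being the union of singular sets of semialgebraic functions). Consequently, every point $x_0 \in U \setminus U_{\text{bad}}$ lies in the interior of some closed ball $B \subset U \setminus U_{\text{bad}}$. Equivalently, writing $\hat{U}_{\text{good}} = \hat{U} \setminus \hat\psi(U_{\text{bad}}) = \hat\psi(U \setminus U_{\text{bad}})$, every point of $\hat{U}_{\text{good}}$ lies in $\hat{B} = \hat\psi(B)$ for some such ball $B$.

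For the forward direction, suppose $(F_1,\ldots,F_D)$ is a section of $\mathcal{H}|_{\hat{U}_{\text{good}}}$. Then for every closed ball $B \subset U \setminus U_{\text{bad}}$, the restriction is also a section of $\mathcal{H}|_{\hat{B}}$, so Lemma \ref{semialgebraic-bundle-lemma} gives
\[
\sum_{|\beta| \leq m} \sum_{j=1}^D A^i_{j\beta}(x)\, \partial_y^\beta F_j(x,y) = G^i(x) \quad \text{on } \hat{B}, \text{ for all } i.
\]
Since every point of $\hat{U}_{\text{good}}$ is in some such $\hat{B}$, the identity holds on all of $\hat{U}_{\text{good}}$. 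For the reverse direction, if the identity holds on $\hat{U}_{\text{good}}$, then in particular it holds on each $\hat{B}$ with $B \subset U \setminus U_{\text{bad}}$ a closed ball, and Lemma \ref{semialgebraic-bundle-lemma} yields that $J_z F \in H(z)$ for all $z \in \hat{B}$. Covering $\hat{U}_{\text{good}}$ by such $\hat{B}$ gives $J_z F \in H(z)$ for every $z \in \hat{U}_{\text{good}}$, i.e., $(F_1,\ldots,F_D)$ is a section of $\mathcal{H}|_{\hat{U}_{\text{good}}}$.

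There is no real obstacle here; the only minor point requiring care is confirming that $U \setminus U_{\text{bad}}$ is open so that closed balls through every good point exist inside it, and that $\hat\psi$ is a bijection onto $\hat U$ so that $\hat U \setminus \hat\psi(U_{\text{bad}}) = \hat\psi(U \setminus U_{\text{bad}})$, which is immediate from the definition $\hat\psi(x) = (x,\psi(x))$.
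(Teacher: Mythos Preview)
Your proposal is correct and follows exactly the same approach as the paper: cover $U\setminus U_{\text{bad}}$ by closed balls $B$ and apply Lemma \ref{semialgebraic-bundle-lemma} on each $\hat B$, noting that both the section condition and the linear equation are local in $x$. The paper's proof is just your argument compressed into one sentence.
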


\begin{proof}
$U\setminus U_{\text{bad}}$ is a union of (infinitely many overlapping)
closed balls $B$. Applying Lemma \ref{semialgebraic-bundle-lemma} to each $B$,
we obtain the desired conclusion.
\end{proof}

\subsection{Gaussian Elimination with Parameters}

\label{section-Gaussian-Elimination}

Suppose we are given a system of linear equations

\begin{itemize}
\item[\refstepcounter{equation}\text{(\theequation)}\label{gep1}] $%
X_{i}+\sum_{j>k}A_{ij}X_{j}=b_{i}$, for $i=1,\cdots ,k$ with $\left\vert
A_{ij}\right\vert \leq 2^{k}$ for $i=1,\cdots k,$ $j=k+1,\cdots ,M$, and

\item[\refstepcounter{equation}\text{(\theequation)}\label{gep2}] $%
\sum_{j>k}C_{ij}X_{j}=g_{i}$, for $i=k+1,\cdots ,N$,
\end{itemize}
where $0\leq k\leq N,M;$ the $A_{ij}$, $C_{ij}$, $b_{i}$, $g_{i}$ are
semialgebraic functions defined on a semialgebraic set $E\subset \mathbb{R}%
^{n}$; and $X_{1},\cdots ,X_{M}$ are unknowns.

We say that this system is in \underline{$k$-echelon form} on $E$

If $k=0$, then we have simply \eqref{gep2} for $i=1,\cdots,N$, so every
system of linear equations with coefficient matrix and right-hand sides
depending semialgebraically on $x \in E$ is in $0$-echelon form on $E$.

If also $C_{ij} \equiv 0$ on $E$ for all $i=k+1,\cdots,N$, $j=k+1,\cdots,M$,
then we say that our system of equations is in \underline{echelon form} on $%
E $. In particular, a system in $k$-echelon form with $k=\min\{N,M\}$ is in
echelon form on $E$. Suppose our system is in $k$-echelon form with $k<\min
\{N,M\}$. We partition $E$ as follows. Let $E_{\text{good}} = \{x \in E:
\text{All the } C_{ij}(x) =0 \}$. For $\tilde{i}=k+1,\cdots, N$ and $\tilde{j}=k+1,\cdots,M$%
, we let $\tilde{E}(\ti,\tj)= \{x \in E: |C_{\ti \tj}| = \max_{i j} |C_{i j}| >0 \}$.
The $E_{\text{good}}$ and $\tilde{E}(i,j)$ form a covering of $E$.

We enumerate the pairs $(i,j)$ in any order and then form sets $E(i,j)$ by
removing from $\tilde{E}(i,j)$ all points contained in some $\tilde{E}%
(i^{\prime },j^{\prime })$ with $(i^{\prime },j^{\prime })$ preceding $(i,j)$%
. Then $E_{\text{good}}$ and the $E(i,j)$ form a partition of $E$ into
semialgebraic sets. On $E_{\text{good}}$, our system is in echelon form.

On
each $E(a,b)$, we will exhibit a system of linear equations in $(k+1)$%
-echelon form, equivalent to the given system \eqref{gep1}, \eqref{gep2}.
For fixed $(a,b)$, we relabel equations and unknowns so that our system
still has the form \eqref{gep1}, \eqref{gep2}, but with $|C_{k+1,k+1}|=
\max_{ij}|C_{ij}| >0$. Dividing equations \eqref{gep2} by $C_{k+1,k+1}$, we
may assume that
\begin{equation}
C_{k+1,k+1} =1  \label{gep3}
\end{equation}
and all
\begin{equation}
|C_{ij}|\leq 1.  \label{gep4}
\end{equation}

Note that $A_{ij}, C_{ij}, b_i, g_i$ still depend semialgebraically on $x$.
From each equation \eqref{gep1}, we subtract $A_{i(k+1)}$ times equation %
\eqref{gep2} with $i=k+1$. From each equation \eqref{gep2} ($i\not=k+1$), we
subtract $C_{i,k+1}$ times equation \eqref{gep2} with $i=k+1$. Thus, we
obtain equations of the form

\begin{equation}
\left[
\begin{array}{l}
X_{i}+\sum_{j>k}\tilde{A}_{ij}X_{j}=\tilde{b}_{i},\text{for }i=1,\cdots ,k
\\
X_{k+1}+\sum_{j>k+1}C_{k+1,j}X_{j}=g_{k+1}, \\
\sum_{j\geq k+1}\tilde{C}_{ij}X_{j}=\tilde{g}_{i}\text{, for }i>k+1.%
\end{array}%
\right.  \label{gp5}
\end{equation}%
Here, $\tilde{A}_{ij}=A_{ij}-A_{i\left( k+1\right) }C_{k+1,j}$ for $%
i=1,\cdots ,k$, $j\geq k+1$; and $\tilde{C}_{ij}=C_{ij}-C_{i,k+1}C_{k+1,j}$
for $i=k+2,\cdots ,N$, $j> k+1$.

In particular, $\tilde{A}_{i,k+1}=A_{i,k+1}-A_{i,k+1}\cdot C_{k+1,k+1}=0$,
and $\tilde{C}_{i,k+1}=C_{i,k+1}-C_{i,k+1}\cdot C_{k+1,k+1}=0$, thanks to %
\eqref{gep3}.

Also, $\left\vert \tilde{A}_{ij}\right\vert \leq \left\vert
A_{ij}\right\vert +\left\vert A_{i,k+1}\right\vert \cdot \left\vert
C_{k+1,j}\right\vert \leq \left\vert A_{ij}\right\vert +\left\vert
A_{i,k+1}\right\vert $ (by \eqref{gep4})$\leq 2^{k}+2^{k}$ (because our
system \eqref{gep1}, \eqref{gep2} is in $k$-echelon form)$=2^{k+1}$. Recall that $|C_{k+1,j}| \leq 1$.

These remarks show that the system of equations \eqref{gp5} is in $\left(
k+1\right) $-echelon form.

We repeat this procedure, starting with a system in $0$-echelon form,
and partition $E$ more and more finely into pieces $E_{\nu }$, on each of which an equivalent system to \eqref{gep1}, \eqref{gep2} is either in echelon
form, or in $k$-echelon form for ever higher $k$. The procedure has to stop
after at most $\min \left( N,M\right) $ steps, because a system in $k$%
-echelon form with $k=\min \left( N,M\right) $ is automatically in echelon
form.

Thus, we have proven the following result

\begin{lemma}
\label{gep-lemma} Consider a system of linear equations
\begin{equation}
\sum_{j=1}^{M}C_{ij}\left( x\right) X_{j}=g_{i}\left( x\right) \text{ }%
\left( i=1,\cdots ,N\right)  \label{gepstar}
\end{equation}%
where the $C_{ij}\left( x\right) $ and $g_{i}\left( x\right) $ are
semialgebraic functions defined on a semialgebraic set $E\subset \mathbb{R}%
^{n}$.

Then we can partition $E$ into semialgebraic sets $E_{\nu }$ $\left( \nu
=1,\cdots ,\nu _{\max }\right) $, for which the following holds for each $%
\nu $:

There exist a permutation $\pi :\left\{ 1,\cdots ,M\right\} \rightarrow
\left\{ 1,\cdots ,M\right\} $ and an integer $0\leq k\leq \min \left(
N,M\right) $ such that for each $x\in E_{\nu }$, the system \eqref{gepstar}
is equivalent to a system of the form
\begin{equation}
\left[
\begin{array}{c}
X_{\pi i}+\sum_{j>k}\tilde{A}_{ij}\left( x\right) X_{\pi j}=\tilde{g}%
_{i}\left( x\right) \text{ for }i=1,\cdots ,k \\
0=\tilde{b}_{i}\left( x\right) \text{ for }i=k+1,\cdots ,N\text{.}%
\end{array}%
\right.  \label{gepstarstar}
\end{equation}%
That is, for each $x\in E_{\nu }$ and each $\left( X_{1},\cdots
,X_{M}\right) \in \mathbb{C}^{M}$, \eqref{gepstar} holds at $x$ if and only if %
\eqref{gepstarstar} holds at $x$. Here, the $\tilde{A}_{ij},\tilde{g}_{i},$ and $%
\tilde{b}_{i}$ are semialgebraic functions on $E_{\nu }$, and $\left\vert
\tilde{A}_{ij}\left( x\right) \right\vert \leq 2^{k}$ on $E_{\nu }$.
\end{lemma}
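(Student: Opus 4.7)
The plan is to prove the lemma by iterating the one-step reduction described in the paragraphs immediately preceding the lemma statement, organizing it as an induction on an echelon parameter $k$. The inductive claim is: for every $0 \leq k \leq \min(N,M)$ and every semialgebraic set $\tilde E \subset E$, if the system is equivalent on $\tilde E$ (after some permutation of unknowns) to one in $k$-echelon form in the sense of \eqref{gep1}, \eqref{gep2} with coefficient bound $|A_{ij}| \leq 2^{k}$, then $\tilde E$ admits a finite semialgebraic partition on each piece of which the system is equivalent, after a further permutation of unknowns, to a system in either echelon form or $(k+1)$-echelon form with coefficient bound $2^{k+1}$. The base case $k=0$ is trivial since every system is in $0$-echelon form on $E$ (the equations \eqref{gep1} being vacuous). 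Iterating the claim at most $\min(N,M)$ times produces the final partition $\{E_\nu\}$, because a system in $k$-echelon form with $k = \min(N,M)$ is automatically in echelon form, which is exactly the shape \eqref{gepstarstar}.

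For the inductive step, on a set $\tilde E$ where the system is in $k$-echelon form I would define the semialgebraic set $E_{\text{good}} = \{x \in \tilde E : C_{ij}(x) = 0 \text{ for all } i,j > k\}$, on which the system is already in echelon form. On $\tilde E \setminus E_{\text{good}}$, for each pair $(\ti,\tj)$ with $\ti,\tj > k$ let
\[
\tilde E(\ti,\tj) = \{x \in \tilde E : |C_{\ti \tj}(x)| = \max_{i,j > k} |C_{ij}(x)| > 0\};
\]
fix an enumeration of these finitely many pairs and let $E(\ti,\tj)$ be $\tilde E(\ti,\tj)$ with all earlier $\tilde E(\ti',\tj')$ removed. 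The resulting $E(\ti,\tj)$ together with $E_{\text{good}}$ form a finite semialgebraic partition of $\tilde E$, and on each $E(\ti,\tj)$ the coefficient $C_{\ti \tj}$ is nonzero and dominates every remaining $C_{ij}$ in absolute value.

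On each $E(\ti,\tj)$ I would then relabel, sending $\tj \to k+1$ on the unknowns and $\ti \to k+1$ on the equations of type \eqref{gep2}, divide that equation by $C_{k+1,k+1}$ to normalize the pivot to $1$, and carry out the Gaussian elimination spelled out in \eqref{gp5}: subtracting $A_{i,k+1}$ times the normalized pivot row from each of the equations \eqref{gep1}, and $C_{i,k+1}$ times the normalized pivot row from each remaining equation of \eqref{gep2}. This zeros out the $(k+1)$-st column below and (after the substitution) gives a $(k+1)$-echelon system. The new coefficients are $\tilde A_{ij} = A_{ij} - A_{i,k+1} C_{k+1,j}$, and using $|A_{ij}| \leq 2^{k}$, $|A_{i,k+1}| \leq 2^{k}$, $|C_{k+1,j}| \leq 1$ (the latter from the max-magnitude pivot choice after normalization), one gets $|\tilde A_{ij}| \leq 2^{k} + 2^{k} = 2^{k+1}$, preserving the inductive hypothesis at the next level.

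The main bookkeeping points are (i) that every set introduced (the $E(\ti,\tj)$, $E_{\text{good}}$, and the ultimate $E_\nu$) is semialgebraic, which is clear because at each stage we only take zero-sets, complements, and max-level sets of finitely many semialgebraic functions; and (ii) that the coefficient bound at most doubles at each step, which is precisely why the pivot is chosen to realize the maximum absolute value and then normalized. Since the reduction halts after at most $\min(N,M)$ iterations, the resulting partition of $E$ into semialgebraic pieces $E_\nu$ is finite; on each $E_\nu$ we read off the permutation $\pi$, the integer $k$, and the semialgebraic coefficients $\tilde A_{ij}$, $\tilde g_i$, $\tilde b_i$ (the last being the right-hand sides of the equations that were reduced to $0 = \tilde b_i$) realizing the equivalence with \eqref{gepstarstar}. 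I do not anticipate a conceptual obstacle; the one place that requires care is ensuring the bound $|\tilde A_{ij}| \leq 2^k$ survives the relabeling and the division by the pivot, and this is exactly what the max-magnitude choice of pivot buys.
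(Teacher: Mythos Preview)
Your proposal is correct and follows essentially the same approach as the paper: both proceed by iterated Gaussian elimination with a max-magnitude pivot choice, partitioning semialgebraically at each step and tracking the doubling bound $|\tilde A_{ij}|\leq 2^{k+1}$ via $|C_{k+1,j}|\leq 1$ after normalization. The only cosmetic difference is that you phrase the iteration as an explicit induction on $k$, whereas the paper describes it as a repeated procedure; the content is identical.
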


In essence, the method for solving the system \eqref{gepstar} is just the
usual Gaussian elimination, except that we take extra care to maintain the
growth condition $\left\vert \tilde{A}_{ij}\left( x\right) \right\vert \leq
2^{k}$.

\subsection{What It Means to be a Section of a Semialgebraic Bundle}

We work with a semialgebraic bundle $\mathcal{H} = (H(x))_{x \in \mathbb{R}%
^2}$. Each $H(x)$ is a coset of an $\mathcal{R}_x$-submodule of $(\mathcal{R}%
_x)^D$, depending semialgebraically on $x$. Here, $\mathcal{R}_x$ is the
ring of the $m$-jets of functions at $x$. A function $F = (F_1, \cdots, F_D)
\in C^m_{loc} (\Omega, \mathbb{R}^D)$ ($\Omega \subset \mathbb{R}^2$ open) is a
section of $\mathcal{H}$ if for all $x \in \Omega$ the $m$-jet $J_x F$
belongs to $H(x)$. A function $F \in C^m_{loc} (\Omega, \mathbb{R}^D)$ is called a
local section near $x^0$ ($x^0 \in \Omega$) if for some small disc $B
\subset \Omega$ centered at $x^0$ we have $J_x F \in H(x)$ for all $x \in B$.

Let $\Omega = \{(x,y) \in \mathbb{R}^2: 0 \leq y \leq x \}$. Let $\mathcal{H}
= (H(x))_{x \in \mathbb{R}^2}$ be a semialgebraic bundle, with $H((0,0))=\{0
\}$. We assume that $\mathcal{H}$ has a section. We want a convenient condition on functions $F \in C^m_{loc} (\Omega, \mathbb{%
R}^D)$ that is equivalent to the assertion that $F|_{B \cap \Omega^{\text{%
interior}}}$ is a section of $\mathcal{H}$ for a small enough disc $B$
centered at the origin. We achieve (approximately) that.

To do so, we partition $\Omega$ into semialgebraic open subsets of $\mathbb{R%
}^2$, finitely many semialgebraic curves in $\mathbb{R}^2$, and finitely
many points. To start with, we partition $\Omega $ into the point $(0,0)$,
the arcs $\{(x,0):x>0\},\{(x,x): x>0 \},$ and $\Omega^{\text{interior}}$.

As we proceed, we will cut up each of our semialgebraic open sets into
finitely many semialgebraic open subsets, finitely many semialgebraic arcs,
and finitely many points. We won't keep track explicitly of the arcs and
points at first; we just discard semialgebraic subsets of $\mathbb{R}^{2}$
of dimension $\leq 1$.

We apply Lemma \ref{semialgebraic-bundle-lemma} in the case $k=0$ to $\Omega ^{\text{interior}%
} $ and $\mathcal{H}$. (See Remark \ref{remarklemma5.1}.)

Thus, we obtain a semialgebraic $V_{1}\subset \Omega ^{\text{interior}}$ of
dimension $\leq 1$, outside of which the following holds for some
semialgebraic functions $A_{ij}^{\#}(x),\phi _{i}^{\#}(x)$ for $1\leq i\leq
i_{\max },1\leq j\leq D,x\in \Omega ^{\text{interior}}\setminus V_{1}$:

Let $F=(F_1,\cdots, F_D)$ belong to $C^m_{loc}(U,\mathbb{R}^D)$ where $U$ is a neighborhood of $x^0
\in \Omega^{\text{interior}} \setminus V_1$. Then $F$ is a local section of $%
\mathcal{H}$ near $x^0$ if and only if

\begin{itemize}
\item[\refstepcounter{equation}\text{(\theequation)}\label{WIRM1}] $%
\sum_{j=1}^D A_{ij}^{\#}(x) F_j(x) = \phi_i^{\#}(x)$, for $i=1,\cdots,
i_{\max}$, for all $x$ in a neighborhood of $x^0$.
\end{itemize}

The equations \eqref{WIRM1} have a solution for each fixed $x$, because $%
\mathcal{H}$ has a section. Next, we apply Lemma \ref{gep-lemma} to the
above system of linear equations.

Thus, we obtain a partition of $\Omega ^{\text{interior}}\setminus V_{1}$
into semialgebraic sets $E_{\nu }^{\#}$ ($\nu =1,\cdots ,\nu _{\max }^{\#}$%
), for which we have integers $\tilde{k}_{\nu }\geq 0$, permutations $\tilde{%
\pi}_{\nu }:\{1,\cdots ,D\}\rightarrow \{1,\cdots ,D\}$, and semialgebraic
functions $\tilde{A}_{ij}^{\nu }(x)$ ($1\leq i\leq \tilde{k}_{\nu },\tilde{k}%
_{\nu }+1\leq j\leq D,x\in E_{\nu }^{\#}$), $\tilde{\phi}_{i}^{\nu }(x)$
such that for any $x\in E_{\nu }^{\#}$, the system of equations \eqref{WIRM1}
is equivalent to%
\begin{equation}
F_{\pi _{\nu }i}\left( x\right) +\sum_{j>\tilde{k}_{\nu }}\tilde{A}%
_{ij}^{\nu }\left( x\right) F_{\pi _{\nu }j}\left( x\right) =\tilde{\varphi}%
_{i}^{\nu }\left( x\right) \text{ for } i=1,\cdots ,\tilde{k}_{\nu }.  \label{WIR2}
\end{equation}

Moreover, the $\tilde{A}_{ij}^{\nu }\left( x\right) $ are bounded. Note that the functions $\tilde{b}_i$ in \eqref{gepstarstar} are identically $0$ because our equations \eqref{WIRM1} have a solution.

Because $\mathcal{H}$ has a section, there exists $F=\left( F_{1},\cdots
,F_{D}\right) \in C^m_{loc}\left( \Omega ,\mathbb{R}^{D}\right) $ satisfying (%
\ref{WIRM1}) for all $x\in \Omega ^{\text{interior}}\setminus V_{1}$, hence
also satisfying (\ref{WIR2}) in $E_{\nu }^{\#}$. Consequently, the left-hand
side of (\ref{WIR2}) is bounded (for bounded $x$), and thus also the $\tilde{\varphi}%
_{i}^{D}\left( x\right) $ are bounded (for bounded $x$).

Applying Theorem \ref{log-derivative-theorm}, we obtain a semialgebraic $%
V_{2}\subset \mathbb{R}^{2}$ of dimension $\leq 1$, satisfying
\begin{equation}
\left\vert \partial ^{\alpha }\tilde{\varphi}_{i}^{\nu }\left( x\right)
\right\vert ,\left\vert \partial ^{\alpha }\tilde{A}_{ij}^{\nu }\left(
x\right) \right\vert \leq C\left[ \text{dist}\left( x,V_{2}\right) \right]
^{-\left\vert \alpha \right\vert }\text{ for bounded } x \text{ outside }V_{2}\text{, for }%
\left\vert \alpha \right\vert \leq m+100\text{.}  \label{WIRM3}
\end{equation}

By adding $\partial \Omega $ to $V_{2}$ and removing from $V_{2}$ all points
outside $\Omega $, we may assume $V_{2}\subset \Omega $. (This operation
does not increase the distance from $V_{2}$ to any point of $\Omega $.)

Let $\hat{E}_{\nu }$ $\left( \nu =1,\cdots ,\nu _{\max }\right) $ be the
connected components of the interiors of the sets $E_{\nu }^{\#}\setminus
V_{2} $ ($\nu =1,\cdots ,\nu _{\max }^{\#}$).

Then $\Omega $ is partitioned into the $\hat{E}_{\nu }$
and $V_{3}$, where $V_{3}$ is a semialgebraic subset of $\Omega $ of
dimension $\leq 1$. The $\hat{E}_{\nu }$ are pairwise disjoint open
connected semialgebraic sets. Any path in $\Omega $ that does not meet $%
V_{3} $ stays entirely in a single $\hat{E}_{\nu }$. Indeed, suppose not: let $\gamma
\left( t\right) \in \Omega $ \ $\left( t\in \left[ 0,1\right] \right) $ be a
path starting at $\gamma \left( 0\right) \in \hat{E}_{\nu }$ not staying in $\hat{E}_\nu$ and not meeting
$V_{3}$. Pick $t_{\ast }=$ $\inf \left\{ t>0:\gamma \left( t\right) \not\in
\hat{E}_{\nu }\right\} $. Then $t^*>0$ since $\hat{E}_\nu$ is open. We can't have $\gamma \left( t_{\ast }\right) \in
\hat{E}_{\nu ^{\prime }}$ with $\nu ^{\prime }\not=\nu $ else $\gamma \left(
t\right) \in \hat{E}_{\nu ^{\prime }}$ (and $\in \hat{E}_{\nu }$) for $t\in
\lbrack t_{\ast }-\varepsilon ,t_{\ast })$. We can't have $\gamma(t_{\ast })$ in $E_\nu$, since that  would imply $\gamma(t)$ in $E_\nu$ for all $t$ in $[t_{\ast }, t_{\ast }+\varepsilon]$. Thus, $\gamma \left( t_{\ast
}\right) \in V_{3}$, contradicting the fact that $\gamma $ does not meet $%
V_{3}$.

Moreover, there exist integers $\hat{k}_{\nu }\geq 0$, permutations $\hat{\pi%
}_{\nu }:\left\{ 1,\cdots ,D\right\} \rightarrow \left\{ 1,\cdots ,D\right\}
$, and semialgebraic functions $\hat{A}_{ij}^{\nu }\left( x\right) $ $\left(
1\leq i\leq \hat{k}_{\nu }\text{, }\hat{k}_{\nu }+1\leq j\leq D\right) $ and
$\hat{\varphi}_{i}^{\nu }\left( x\right) $ $\left( 1\leq i\leq \hat{k}_{\nu
}\right) $ defined on $\hat{E}_{\nu }$, with the following properties

\begin{itemize}
\item[\refstepcounter{equation}\text{(\theequation)}\label{WIRM4}] $%
\left\vert \partial ^{\alpha }\hat{A}_{ij}^{\nu }\left( x\right) \right\vert
$, $\left\vert \partial ^{\alpha }\hat{\varphi}_{i}^{\nu }\left( x\right)
\right\vert \leq C\left[ \text{dist}\left( x,V_{3}\right) \right]
^{-\left\vert \alpha \right\vert }$ for bounded $x\in \hat{E}_{\nu }$, $\left\vert
\alpha \right\vert \leq m+100$, and

\item[\refstepcounter{equation}\text{(\theequation)}\label{WIRM5}] Let $%
x^{0}\in \hat{E}_{\nu }$ and let $F=\left( F_{1},\cdots ,F_{D}\right) $ be $%
C^m_{loc}$ in a neighborhood of $x^{0}$. Then $F$ is a local section of $%
\mathcal{H}$ near $x^{0}$ if and only if
\begin{equation*}
F_{\pi _{\nu }i}\left( x\right) +\sum_{j>\hat{k}_{\nu }}\hat{A}_{ij}^{\nu
}\left( x\right) F_{\pi _{\nu }j}\left( x\right) =\hat{\varphi}_{i}^{\nu
}\left( x\right)
\end{equation*}%
in a neighborhood of $x^{0}$ for each $i=1,\cdots ,\hat{k}_{\nu }$.
\end{itemize}

We partition $V_{3}\cup \left\{ \left( x,0\right) :x\geq 0\right\} \cup
\left\{ \left( x,x\right) :x\geq 0\right\} $ into finitely many
Nash open arcs (not containing their endpoints) and finitely many points.

For small enough $\delta >0$, $B\left( 0,\delta \right) \mathbb{\subset
\mathbb{R}}^{2}$ avoids all the above arcs not containing $0$ in their
closure, and all the above points except possibly for the point $0$. Taking $%
\delta $ small, we may assume that the remaining arcs have
convergent Puiseux series in $B(0,\delta)$.

Notice that our semialgebraic one-dimensional sets are all contained in $%
\Omega $; so no arcs have tangent lines at $0$ lying outside the sector $%
\Omega $. Thus, the remaining arcs have the form $\{y=\psi _{s}(x)\}$ in $%
B(0,\delta )$, where $\psi _{1},\cdots ,\psi _{s_{\max }}$ are semialgebraic
functions of one variable, with convergent Puiseux expansion in $[0,\delta ]$%
. We discard duplicates, i.e., we may assume $\psi _{s}$ is never
identically equal to $\psi _{s^{\prime }}$ for $s^{\prime }\not=s$. Note that the line segments $\{(x,0):0<x<\delta \}$ and $\{(x,x):0<x<\delta\}$ are among our arcs $\gamma_s$. Taking
$\delta >0$ smaller yet, we may assume that for each $s\not=s^{\prime }$,
either $\psi _{s}(x)<\psi _{s^{\prime }}(x)$ for all $x\in (0,\delta )$, or $%
\psi _{s}(x)>\psi _{s^{\prime }}(x)$ for all $x\in (0,\delta )$. (That's
because the $\psi _{s}$ are given by convergent Puiseux expansions.) Thus, in
$B(0,\delta )$, our curves may be labelled so that $0\equiv \psi
_{0}(x)<\psi _{1}(x)<\cdots <\psi _{s_{\max }}(x)\equiv x$ for $x\in
(0,\delta )$. The arcs are $\gamma _{s}=\{(x,\psi _{s}(x)):x\in \lbrack
0,\delta ]\}$ for $s=0,\cdots ,s_{\max }$. (Here we have thrown in the point
$0$, and taken $\delta $ small to allow ourselves to include $x=\delta $, not
just $x<\delta $.)

The sets we discarded in passing from $V_3$ to the semialgebraic arcs $%
\gamma_0, \cdots, \gamma_{s_{\max}}$ are irrelevant in the sense that $V_3
\cap B(0, \delta) \subset (\gamma_0 \cup \gamma_1 \cup \cdots \cup
\gamma_{s_{\max}})\cap B(0, \delta)$.

Let $E_s$ ($s = 1, \cdots, s_{\max}$) be the part of the $B(0,\delta)$ lying
between $\gamma_{s-1}$ and $\gamma_{s}$, i.e., $E_s= \{(x,y) \in B(0,
\delta): 0 < x < \delta, \psi_{s-1}(x) < y < \psi_s(x) \}$.

Any two points in a given $E_s$ may be joined by a path in $B(0,\delta)
\setminus \bigcup_{s=0}^{s_{\max}}\gamma_s \subset B(0,\delta) \setminus V_3$%
, hence all points in a given $E_s$ lie in the same $\hat{E}_\nu$.

Therefore, for $s = 1, \cdots, s_{\max}$, there exist $k_s \geq 0 $,
permutations $\pi_s: \{1, \cdots, D\} \rightarrow \{ 1, \cdots, D\}$, and
semialgebraic functions $A_{ij}^s(x)$, $\psi_i^s(x)$ ($1\leq i \leq k_s; j =
k_s+1, \cdots, D$) on $E_s$, with the following properties

\begin{itemize}
\item[\refstepcounter{equation}\text{(\theequation)}\label{WIRM-I}] Let $x^0
\in E_s$, and let $F = (F_1, \cdots, F_D)$ be $C^m_{loc}$ in a neighborhood of $%
x^0 $. Then $F$ is a local section of $\mathcal{H}$ near $x^0$ if and only if

\item[\refstepcounter{equation}\text{(\theequation)}\label{WIRM6}] $F_{\pi_s
i}(x) + \sum_{j>k_s}A_{ij}^s(x) F_{\pi_s j}(x) = \psi_i^s (x)$ in a
neighborhood of $x^0$ for each $i =1, \cdots, k_s$.
\end{itemize}

Moreover,

\begin{itemize}
\item[\refstepcounter{equation}\text{(\theequation)}\label{WIRM-II}] $%
|\partial^\alpha A_{ij}^s(x)|$, $|\partial^\alpha \psi_i^s(x)| \leq C \left[%
\text{dist} (x, \gamma_s \cup \gamma_{s-1}) \right]^{-|\alpha|}$ on $E_s$
for $|\alpha| \leq m+100$.
\end{itemize}

In particular, if $F = (F_1, \cdots, F_D) \in C^m_{loc} (\Omega, \mathbb{R}^D)$,
then $J_x F \in H(x)$ for all $x \in [\Omega \cap B(0, \delta)] \setminus
(\gamma_0 \cup \cdots \cup \gamma_{s_{\max}})$ if and only if for each $s=1,
\cdots, s_{\max}$, \eqref{WIRM6} holds on all of $E_s$.

Next, we apply Lemma \ref{semialgebraic-bundle-lemma} to $\mathcal{H}%
_{s}=(H(x))_{x\in \gamma _{s}}$, ($s=0,\cdots ,s_{\max }$). We obtain
semialgebraic functions for which the following holds.

Let $\left( x^{0},\psi _{s}\left( x^{0}\right) \right) \in \gamma _{s}$ be
given, and let $F=\left( F_{1},\cdots ,F_{D}\right) \in C^m_{loc}\left( U,%
\mathbb{R}^{D}\right) $, where $U$ is a neighborhood of $\gamma _{s}$ in $%
\mathbb{R}^{2}$. Then, except for finitely many bad $x^0$, we have the following equivalence:

$F$ is a local section of $\mathcal{H}_{s}$ near $\left( x^{0},\psi
_{s}\left( x^{0}\right) \right) $ if and only if
\begin{equation*}
\sum_{\substack{ 1\leq j\leq D  \\ 0\leq l\leq m}}\Theta _{jl}^{is}\left(
x\right) \partial _{y}^{l}F_{j}|_{\left( x,\psi _{s}\left( x\right) \right)
}=g^{si}\left( x\right) \quad \left( i=1,\cdots ,i_{\max }\left( s\right)
\right)
\end{equation*}%
for all $x$ in a neighborhood of $x^{0}$. Here, the $\Theta $'s and $g$'s are semialgebraic functions of one
variable. To say that $F$ is a local section of $\mathcal{H}_{s}$ near $%
\left( x^{0},\psi _{s}\left( x^{0}\right) \right) $ means that $J_{\left(
x,\psi _{s}\left( x\right) \right) }F\in H\left( x,\psi _{s}\left( x\right)
\right) $ for all $x$ in a neighborhood of $x^{0}$.

By restricting attention to $B\left( 0,\delta \right) $ and taking $\delta
>0 $ smaller, we may exclude from $B\left( 0,\delta \right) $ all these bad $%
x^{0}$, except for $x^{0}=0$.

Combining our results \eqref{WIRM-I}, \eqref{WIRM-II} on the $E_{\nu }$ with
the above result on the arcs $\gamma _{s}$, we obtain the following result.

\begin{lemma}
\label{WIRM-lemma} Let $\Omega =\left\{ \left( x,y\right) \in \mathbb{R}%
^{2}:0\leq y\leq x\leq 1\right\} $ and let $\mathcal{H=}\left( H\left(
x\right) \right) _{x\in \Omega }$ \ be a semialgebraic bundle, with each $%
H\left( x\right) $ consisting of $m$-jets at $x$ of functions from $\mathbb{R%
}^{2}$ to $\mathbb{R}^{D}$.

Assume $H\left( \left( 0,0\right) \right) =\left\{ 0\right\} $ and assume $\mathcal{H}$ has a section.

Then there exist the following objects, with properties to be specified
below:

\begin{itemize}
\item A positive number $\delta \in \left( 0,1\right) $.

\item Semialgebraic functions $0 = \psi _{0}\left( x\right) < \psi
_{1}\left( x\right) < \cdots < \psi _{s_{\max }}\left( x\right) =x$ on
$\left( 0,\delta \right) ,$ all given by convergent Puiseux expansions on $%
\left( 0,\delta \right) $.

\item Integers $k_{s}$ $\left( 0\leq k_{s}\leq D\right) $ and permutations $%
\pi _{s}:\left\{ 1,\cdots ,D\right\} \rightarrow \left\{ 1,\cdots ,D\right\}
$ for $s=1,\cdots ,D$.

\item Semialgebraic functions $A_{ij}^{s}\left( x,y\right) $ $\left(
s=1,\cdots ,s_{\max},1\leq i\leq k_{s},k_{s}<j\leq D\right) $ and $\varphi
_{i}^{s}\left( x,y\right) $ $( s=1,\cdots ,s_{\max},1\leq i\leq k_{s}) $ defined on $%
E_{s}=\left\{ \left( x,y\right) :0<x<\delta ,\psi _{s-1}\left( x\right)
<y<\psi _{s}\left( x\right) \right\} $.

\item Semialgebraic functions $\Theta _{jl}^{si}\left( x\right) $, $%
g^{si}\left( x\right) $ $(s=0,\cdots ,s_{\max },i=1,\cdots ,i_{\max }\left(
s\right) $, $j=1,\cdots ,D,$ $l=0,\cdots ,m$ defined on $\left( 0,\delta
\right) $, and given there by there by convergent Puiseux expansions.
\end{itemize}

The above objects have the following properties

\begin{itemize}
\item (Estimates) For $\left( x,y\right) \in \Omega $ with $0<x<\delta $ and
$\psi _{s-1}\left( x\right) <y<\psi _{s}\left( x\right) $, we have $%
\left\vert \partial ^{\alpha }A_{ij}^{s}\left( x,y\right) \right\vert $, $%
\left\vert \partial ^{\alpha }\varphi _{i}^{s}\left( x,y\right) \right\vert
\leq C\left[ \min \left( \left\vert y-\psi _{s}\left( x\right) \right\vert
,\left\vert y-\psi _{s-1}\left( x\right) \right\vert \right) \right]
^{-\left\vert \alpha \right\vert }$ for $\left\vert \alpha \right\vert \leq
m+100$.

\item (Condition for sections) Let $F=(F_1,...,F_D)\in C^m_{loc}(\Omega, \R^D)$, and
suppose $J_{x}F\in H\left( x\right) $ for all $x\in \Omega $.

\begin{itemize}
\item[\refstepcounter{equation}\text{(\theequation)}\label{WIRM-box}] Then
for $s=1,\cdots ,s_{\max }$, $i=1,\cdots ,k_{s}$, $x\in \left( 0,\delta
\right) $, $\psi _{s-1}\left( x\right) <y<\psi _{s}\left( x\right) $, we
have
\begin{equation*}
F_{\pi _{s}i}\left( x,y\right) +\sum_{D\geq j>k_{s}}A_{ij}^{s}\left(
x,y\right) F_{\pi _{s}j}\left( x,y\right) =\varphi _{i}^{s}\left( x,y\right)
\text{;}
\end{equation*}%
and for $s=0,1,\cdots ,s_{\max }$, $i=1,\cdots ,i_{\max }\left( s\right) $, $%
x\in \left( 0,\delta \right) $, we have
\begin{equation*}
\sum_{j=1}^{D}\sum_{l=0}^{m}\Theta _{jl}^{si}\left( x\right) \partial
_{y}^{l}F_{j}\left( x,\psi _{s}\left( x\right) \right) =g^{si}\left(
x\right) \text{;}
\end{equation*}%
and $J_{\left( 0,0\right) }F_j=0$ for all $j$.
\end{itemize}

Conversely, if $F=(F_1,...,F_D)\in C^m_{loc}(\Omega, \R^D) $ and the conditions in %
\eqref{WIRM-box} are satisfied, then $J_{z}F$ $\in H\left( z\right) $ for
all $z=\left( x,y\right) \in \Omega $ \ with $0\leq x<\delta $.
\end{itemize}
\end{lemma}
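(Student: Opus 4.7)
The plan is to assemble the lemma directly from the tools developed in Sections 5.1 and 5.2 by performing a successive partition of $\Omega$ into regions on which the section condition becomes a Nash-coefficient linear system, combined with separate one-variable linear equations along a finite family of Puiseux arcs emanating from the origin. First I would apply Lemma \ref{semialgebraic-bundle-lemma} in the case $k=0$ (see Remark \ref{remarklemma5.1}) to the restriction of $\mathcal{H}$ to $\Omega^{\text{interior}}$, producing semialgebraic coefficients $A_{ij}^\#(x)$ and right-hand sides $\varphi_i^\#(x)$ outside a lower-dimensional singular set $V_1$, such that a $C^m_{loc}$ vector function is a local section precisely when it satisfies this linear system. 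Since the bundle admits a section by assumption, the system has a solution at every point.

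Next, I would apply the Gaussian elimination with parameters (Lemma \ref{gep-lemma}) to this system, obtaining a partition of $\Omega^{\text{interior}}\setminus V_1$ into semialgebraic pieces $E_\nu^\#$ on each of which the system reduces to an echelon form with bounded coefficients $\tilde{A}_{ij}^\nu(x)$ and semialgebraic right-hand sides $\tilde{\varphi}_i^\nu(x)$; the ``consistency'' equations $\tilde b_i\equiv 0$ are automatic because a section exists. Then I would invoke the logarithmic derivative theorem (Theorem \ref{log-derivative-theorm}) on the $\tilde{A}_{ij}^\nu$ and $\tilde{\varphi}_i^\nu$ to produce a semialgebraic set $V_2$ of dimension $\leq 1$ outside which these functions are smooth and all derivatives up to order $m+100$ are controlled by a negative power of the distance to $V_2$. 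Replacing the $E_\nu^\#$ by the connected components $\hat E_\nu$ of the interiors of $E_\nu^\#\setminus V_2$ yields the system \eqref{WIRM-I}–\eqref{WIRM-II} with distance-to-singular-locus derivative bounds; connectedness is what guarantees that any path avoiding the singular set $V_3 := \partial\Omega\cup V_1\cup V_2$ lies in a single $\hat E_\nu$, so that the equations stick together coherently.

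The next step is to cut down to a small disc $B(0,\delta)$ and pass to the Puiseux description. The singular locus $V_3$, together with the boundary rays $\{y=0\}$ and $\{y=x\}$, decomposes into finitely many Nash arcs and finitely many points; for $\delta$ small enough, $B(0,\delta)$ avoids all arcs not limiting to $0$ and all stray points. Because $V_3\subset\Omega$, each surviving arc has a tangent direction inside the closed sector $\Omega$, so each is the graph $y=\psi_s(x)$ of a convergent Puiseux series on $[0,\delta]$, and for $\delta$ small the $\psi_s$ are strictly ordered $0\equiv\psi_0<\psi_1<\cdots<\psi_{s_{\max}}\equiv x$. The sets $E_s:=\{\psi_{s-1}(x)<y<\psi_s(x)\}$ then each lie in a single $\hat E_\nu$, giving the bulk equations and the derivative bounds claimed (with $\dist(x,V_3)$ replaced by $\min(|y-\psi_s|,|y-\psi_{s-1}|)$ on $E_s$). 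For each curve $\gamma_s$, I apply Lemma \ref{semialgebraic-bundle-lemma} once more, this time with $k=1$ and $\psi=\psi_s$, to the bundle $\mathcal{H}_s=(H(x))_{x\in\gamma_s}$, producing the semialgebraic coefficients $\Theta_{jl}^{si}(x)$ and $g^{si}(x)$; discarding finitely many exceptional points by shrinking $\delta$ (and using $H(0,0)=\{0\}$ to absorb $x^0=0$ into the equation $J_{(0,0)}F_j=0$) makes these equations valid throughout $(0,\delta)$.

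Finally, I combine the pieces: the equations on the $E_s$ characterize $J_z F\in H(z)$ for $z\in\Omega\cap B(0,\delta)\setminus(\gamma_0\cup\cdots\cup\gamma_{s_{\max}})$, while the equations along each $\gamma_s$ characterize $J_z F\in H(z)$ for $z\in\gamma_s$, and the normalization $J_{(0,0)}F=0$ handles the origin. Conversely, any $F\in C^m_{loc}(\Omega,\R^D)$ satisfying all these equations is a section on all of $\Omega\cap B(0,\delta)$. The main obstacle I anticipate is book-keeping rather than conceptual: ensuring that the Gaussian-elimination partition plays well with the log-derivative singular set so that the connected-component passage genuinely produces distance-to-$V_3$ bounds on $\hat E_\nu$, and verifying that after shrinking $\delta$ the only surviving ``bad'' point where the Lemma \ref{semialgebraic-bundle-lemma} characterization along $\gamma_s$ could fail is the origin itself, which is harmless because $H(0,0)=\{0\}$ is encoded separately.
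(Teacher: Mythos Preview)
Your proposal is correct and follows essentially the same approach as the paper's own proof: apply Lemma \ref{semialgebraic-bundle-lemma} with $k=0$ on the interior, put the resulting system in echelon form via Lemma \ref{gep-lemma}, invoke Theorem \ref{log-derivative-theorm} for the derivative estimates, pass to connected components, reduce the one-dimensional singular locus to Puiseux arcs near the origin, and then apply Lemma \ref{semialgebraic-bundle-lemma} with $k=1$ along each arc. The only minor point you leave implicit is that before invoking the log-derivative bound you need the $\tilde\varphi_i^\nu$ (not just the $\tilde A_{ij}^\nu$) to be bounded on bounded sets, which the paper obtains by plugging an actual section into the echelon-form equations; this is the step that turns $|\partial^\alpha F|\le C|F|\,[\mathrm{dist}]^{-|\alpha|}$ into the stated $[\mathrm{dist}]^{-|\alpha|}$ estimate.
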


\section{A Second Main Lemma}
This section is devoted to the proof of the following lemma. See (A) and (B) in the Introduction.

\begin{lemma}[Second Main Lemma]
\label{lemma-pit} Let $\mathcal{H}=(H(z))_{z \in \Omega}$ with $\Omega =
\left\{ (x,y) \in \mathbb{R}^2: 0 \leq y \leq x \leq 1 \right\}$ and suppose $%
H(z)$ depends semialgebraically on $z$. (As usual, $H(z) \subset \mathcal{R}_z^D$ is a
coset of an $\mathcal{R}_z$-submodule.)

Suppose $\mathcal{H}$ has a section, and suppose $\mathcal{H}((0,0)) =
\left\{ 0 \right\}$. Then there exist semialgebraic functions $%
\theta_{jl}^{si} (x)$, $g^{si} (x)$, $\tilde{\theta}_{jl}^{si}(x)$, $\tilde{g%
}^{si}(x)$ of one variable, and $0 = \psi_0 (x) < \cdots< \psi_{s_{\max}}(x)
= x$, also semialgebraic, for which the following hold.

Suppose $F = (F_1, \cdots, F_D) \in C^m (\Omega,\R^D)$ is a section of $\mathcal{H%
}$. Let $f_{jl}^s(x) = \partial_y^l F_j (x, \psi_s(x))$ for $0 \leq s \leq
s_{\max}$, $0 \leq l \leq m $, $1 \leq j \leq D$.

Then

\begin{enumerate}
\item[\refstepcounter{equation}\text{(\theequation)}\label{pit-star}] $%
\sum_{j,l}\theta _{jl}^{si}(x)f_{jl}^{s}(x)=g^{si}(x)$ on $(0,\delta )$ for
some $\delta >0$ for each $s,i$; and $\sum_{j,l}\tilde{\theta}%
_{jl}^{si}(x)f_{jl}^{s}(x)=\tilde{g}^{si}(x)+o(1)$ as $x\rightarrow 0^{+}$,
each $s$, $i$; and $f_{jl}^{s}(x)=\sum_{k=0}^{m-l}\frac{1}{k!}%
f_{j(l+k)}^{s-1}(x)\cdot \left( \psi _{s}\left( x\right) -\psi _{s-1}\left(
x\right) \right) ^{k}+o\left( \left[ \psi _{s}\left( x\right) -\psi
_{s-1}\left( x\right) \right] ^{m-l}\right) $ as $x\rightarrow 0^{+}$, each $%
s$, $j$, $l$.
\end{enumerate}
\begin{enumerate}
\item[\LA{conversely}]
Conversely, if $f_{jl}^{s}\left( x\right) $ are semialgebraic functions
satisfying (\ref{pit-star}), then there exists a semialgebraic $C^m$
section $F=\left( F_{1},\cdots ,F_{D}\right) $ of $\mathcal{H}$ over $\Omega
_{\delta^{\prime} }=\left\{ \left( x,y\right) :0\leq y\leq x\leq \delta ^{\prime
}\right\} $ (some $\delta ^{\prime }>0$) such that $\partial
_{y}^{l}F_{j}\left( x,\psi _{s}\left( x\right) \right) =f_{jl}^{s}\left(
x\right) $ for $0<x<\delta ^{\prime }$.
\end{enumerate}
\end{lemma}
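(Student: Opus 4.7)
The plan is to deduce this lemma almost entirely from the characterization of sections given in Lemma \ref{WIRM-lemma}, handling direction (A) by straightforward bookkeeping and direction (B) by semialgebraic Hermite interpolation in $y$ between consecutive curves $\gamma_{s-1},\gamma_s$, together with the tilde-equations which make the bundle equations on each $E_s$ asymptotically satisfiable near the origin.

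For direction (A), I would first apply Lemma \ref{WIRM-lemma} to $\mathcal{H}$, obtaining the curves $\psi_s$, the open pieces $E_s$, and the two families of equations in \eqref{WIRM-box}. The curve equations $\sum_{j,l}\Theta_{jl}^{si}(x)\,\partial_y^l F_j(x,\psi_s(x))=g^{si}(x)$ immediately yield $\sum_{j,l}\theta_{jl}^{si}(x)f_{jl}^s(x)=g^{si}(x)$ after setting $\theta_{jl}^{si}=\Theta_{jl}^{si}$. The Taylor compatibility conditions are obtained from ordinary Taylor expansion of $y\mapsto F_j(x,y)$ at $y=\psi_{s-1}(x)$, evaluated and $\partial_y^l$-differentiated at $y=\psi_s(x)$; the $m$-th order remainder is bounded by the modulus of continuity of $\partial_y^m F_j$ times $(\psi_s-\psi_{s-1})^{m-l}$, and since $J_{(0,0)}F_j=0$ that modulus tends to $0$ as $x\to 0^+$, producing the $o((\psi_s-\psi_{s-1})^{m-l})$ error. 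The tilde-equations come from the $E_s$-equations in \eqref{WIRM-box}: writing $F_j(x,y)$ as its Hermite expansion in $y$ around $\psi_{s-1}(x)$ and $\psi_s(x)$ and substituting into $F_{\pi_s i}+\sum_j A_{ij}^s F_{\pi_s j}=\varphi_i^s$, one sees that because the coefficients $A_{ij}^s,\varphi_i^s$ may blow up at the boundary like $|y-\psi_{s\pm1}(x)|^{-\mathrm{const}}$ (estimate \eqref{WIRM-II}), the leading-order terms in the appropriate asymptotic expansion must cancel. Collecting these cancellations gives a finite list of linear relations among the $f_{jl}^s(x)$; applying Lemma \ref{semialgebraic-function-lemma} and Corollary \ref{corollarytolemma_onevariablegrowth} shows that the residual error is $o(1)$ as $x\to 0^+$, which is exactly the tilde-equation form.

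For direction (B), given semialgebraic $f_{jl}^s$ satisfying \eqref{pit-star}, I would proceed as follows. For each $s$, use Hermite interpolation in $y$ on the strip $E_s$ with the data $\{f_{jl}^{s-1}(x)\}_l$ on $\gamma_{s-1}$ and $\{f_{jl}^s(x)\}_l$ on $\gamma_s$ to produce a semialgebraic function $F_j^{(s)}(x,y)$ of the form $\sum_{l=0}^{m}\bigl[\alpha_l^s(x,y)f_{jl}^{s-1}(x)+\beta_l^s(x,y)f_{jl}^s(x)\bigr]$ where $\alpha_l^s,\beta_l^s$ are fixed semialgebraic Hermite basis polynomials in $y$, $\psi_{s-1}(x)$, $\psi_s(x)$. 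The Taylor compatibility relations in \eqref{pit-star} guarantee that the jets of $F_j^{(s)}$ and $F_j^{(s+1)}$ agree to order $m$ on $\gamma_s$, so the concatenation $F$ lies in $C^m$ on $\Omega_{\delta'}$ for some $\delta'>0$ (and one checks $J_{(0,0)}F_j=0$ via the $o$-estimates as $x\to 0^+$, using Corollary \ref{corollary3.2} to further smooth the piecewise-defined function into a globally $C^m$ semialgebraic function near the origin).

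The hard part is verifying that the constructed $F$ is actually a section of $\mathcal{H}$, i.e., that the pointwise equations in \eqref{WIRM-box} hold not only on the curves $\gamma_s$ (which is immediate from the first family in \eqref{pit-star}) but also on each open $E_s$. The plan is: substitute the Hermite expression for $F_j^{(s)}(x,y)$ into the equation $F_{\pi_s i}+\sum_{j>k_s}A_{ij}^s F_{\pi_s j}=\varphi_i^s$, expand in powers of $y-\psi_{s-1}(x)$ (or $y-\psi_s(x)$), and match Taylor coefficients. Each coefficient of $(y-\psi_s(x))^r$ in this expansion is, by the Gaussian-elimination construction behind \eqref{WIRM-box}, a linear combination of the $f_{jl}^s(x)$'s whose vanishing is precisely what the tilde-equations assert—modulo $o(1)$. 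A Puiseux-series argument (via the lemma on semialgebraic one-variable functions) promotes ``holds modulo $o(1)$ as $x\to 0^+$'' to ``holds identically for $0<x<\delta'$'' once we shrink $\delta'$, because any semialgebraic function that is $o(1)$ at $0$ and satisfies an algebraic relation with semialgebraic coefficients is determined by finitely many of its leading Puiseux exponents. This step—turning the asymptotic tilde-equations plus the exact $\theta$-equations into pointwise validity of \eqref{WIRM-box} on $E_s$—is the main obstacle, and is where the interplay between Gaussian elimination (Lemma \ref{gep-lemma}), the singular estimates \eqref{WIRM-II}, and Puiseux expansions has to be carried out carefully.
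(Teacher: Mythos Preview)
Your direction (A) sketch is roughly right for the exact $\theta$-equations and the Taylor compatibility, but your account of the $\tilde\theta$-equations and your entire plan for direction (B) have a genuine gap.

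The fatal step is the last paragraph. You build $F_j^{(s)}$ on each $E_s$ by Hermite interpolation in $y$ from the boundary data, and then need $F_{\pi_s i}+\sum_{j>k_s}A_{ij}^s F_{\pi_s j}=\varphi_i^s$ to hold throughout $E_s$. Your plan is to ``promote'' the asymptotic tilde-equations to exact identities via Puiseux. This cannot work: a semialgebraic function that is $o(1)$ at $0^+$ need not vanish (e.g.\ $x$), and the tilde-equations in the lemma are genuinely asymptotic, not disguised identities. Moreover $A_{ij}^s,\varphi_i^s$ are not polynomial in $y$, so matching finitely many $y$-Taylor coefficients does not make the equation hold on $E_s$.

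The paper proceeds differently. On each half-strip $E_s^\pm$ it separates the unknowns into free components $F_{\pi_s j}$ ($j>k_s$), set equal to polynomials in $y$ matching the prescribed $f_{jl}^s$, and slaved components $G_i:=\sum_j A_{ij}F_j+B_i$, so the $E_s$-equation holds by definition. The entire difficulty becomes showing these forced $G_i$ are $C^m$ up to the boundary curve with the prescribed jet there; this is Lemma \ref{main-lemma}. The $\tilde\theta,\tilde g$ are engineered for exactly this: using the Helly variant (Theorem \ref{helly-theorem}) one selects finitely many semialgebraic heights $y_\sigma(\bar x)\in(0,\psi(\bar x))$ so that control of $y^{|\alpha|-m}\partial^\alpha\{\sum A_{ij}P_j+B_i-Q_i\}$ at the $y_\sigma$ dominates it for all $y$; the tilde-data are evaluations at these $y_\sigma$. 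The $o(1)$ hypothesis, combined with Lemma \ref{semialgebraic-function-lemma}, then gives uniform-in-$x$ decay of $G_i-G_i^{\#\#}$, hence $C^m$ regularity of $G_i$. Finally the $E_s^+,E_s^-$ pieces are glued via the cusp-patching Lemma \ref{lemma-pnc-2}, whose key hypothesis is precisely the third family in \eqref{pit-star}. So the tilde-equations are not Taylor-coefficient cancellations; they are a Helly-type finite test for regularity of the forced solution, and there is no step that upgrades $o(1)$ to $0$.
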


We call the curves $y=\psi_{s}(x)$ ``critical curves".

\subsection{The Jet of a Section at a Critical Curve}\label{section-notation-prelim}
Fix $m\geq 1$. Recall that $\mathcal{P}$ denotes the space of polynomials of degree $%
\leq m$ on $\mathbb{R}^{2}$, and $J_{z}F\in \mathcal{P}$ denotes the $m$-jet
of $F$ at $z\in \mathbb{R}^{2}$. $\odot _{z}$ denotes multiplication of jets
at $z$. We write $\mathfrak{p}$ to denote the space of polynomials of degree $\leq m$
on $\mathbb{R}$. If $F(x,y)$ is a $C^m_{loc}$ function in a neighborhood of $(%
\bar{x},0)$, then $j_{\bar{x}}F\in \mathfrak{p}$ is the $m$-jet at $0$ of
the function $y\mapsto F(\bar{x},y)$. We write $\boxdot $ to denote multiplication of $%
m $-jets at $0$ of $C^m_{loc}$ functions of one variable.

If $\vec{F}=(F_1,\cdots,F_{j_{\max}})$ is a vector of $C^m_{loc}$ functions on $%
\mathbb{R}^2$, then $J_z\vec{F}$ denotes
\begin{equation*}
(J_zF_1,\cdots,J_zF_{j_{\max}}) \in \mathcal{P}^{j_{\max}}.
\end{equation*}

Similarly, $j_{\bar{x}}\vec{F}$ denotes $(j_{\bar{x}}F_1,\cdots, j_{\bar{x}%
}F_{j_{\max}}) \in \mathfrak{p}^{j_{\max}}.$

A function $F^\#:(0,\delta) \rightarrow \mathfrak{p}$
may be regarded as a function of $(x,y) \in (0,\delta) \times \mathbb{R}$
such that for fixed $x$, the function $y \mapsto F^\#(x,y)$ is a polynomial of degree at most $m$.

Fix positive integers $i_{\max},j_{\max}$. Let $\text{Aff}$ denote the
vector space of all affine functions defined on $\mathfrak{p}%
^{j_{\max}+i_{\max}}$. We make the following assumptions:

\begin{itemize}
\item We are given $C^\infty$ semialgebraic functions $A_{ij},B_{i},
(i=1,\cdots,i_{\max}, j=1,\cdots,j_{\max})$ defined on $\Omega_1$, where for
$\delta>0$, $\Omega_\delta=\{(x,y)\in \mathbb{R}^2:0<x<\delta,0<y<\psi(x)\}$%
, and $\psi:(0,1) \rightarrow (0,\infty)$ is a semialgebraic function
satisfying $0< \psi(x)\leq x$ for $x\in (0,1)$.

\item We assume that $\partial^\alpha A_{ij}, \partial^\alpha B_i$ extend to
continuous functions on $\Omega_1^+$ for $|\alpha|\leq m$, where, for $\delta>0$, $%
\Omega_\delta^+=\{(x,y)\in \mathbb{R}^2: 0<x \leq \delta, 0<y\leq \psi(x)\}$.

\item We suppose that
\begin{eqnarray*}
|\partial^\alpha A_{ij}(x,y)| &\leq& C y ^{-|\alpha|}, \text{ and} \\
|\partial^\alpha B_{i}(x,y)|&\leq& Cy^{-|\alpha|}
\end{eqnarray*}
on $\Omega^+_1$ for $|\alpha|\leq m$.
\end{itemize}

\begin{lemma}
\label{main-lemma} Under the above assumptions, there exist $\delta \in
(0,1) $ and semialgebraic maps $\lambda_1,\cdots,\lambda_{k_{\max}},\mu_1,%
\cdots,\mu_{l_{\max}}:(0,\delta) \rightarrow \text{Aff}$ such that the
following hold:

\begin{itemize}
\item[\LA{assertionI}] Suppose $\vec{F}=(F_1,\cdots,F_{j_{\max}})$ and $\vec{G}%
=(G_1,\cdots,G_{i_{\max}})$ belong to $C^m(\Omega_\delta^\text{closure},\R^{j_{\max}})$ and $C^m(\Omega_\delta^\text{closure},\R^{i_{\max}})$ respectively,
with $J_{(0,0)}\vec{F}=0,J_{(0,0)}\vec{G}=0$. Suppose also that $G_i =
\sum_j A_{ij}F_j+B_i$ for each $i$. Then $[\lambda_k(\bar{x})](j_{\bar{x}}
\vec{F},j_{\bar{x}}\vec{G})=0$ for $k = 1,\cdots,k_{\max}, \bar{x} \in
(0,\delta)$, and $[\mu_l(\bar{x})](j_{\bar{x}} \vec{F},j_{\bar{x}}\vec{G})$
is bounded on $(0,\delta)$ and tends to zero as $\bar{x} \rightarrow 0$, for
each $l=1,\cdots,l_{\max}$. We do not assume $\vec{F}$ or $\vec{G}$ is
semialgebraic.

\item[\LA{assertionII}] Suppose there exists an $(\vec{F},\vec{G})$ as in \eqref{assertionI}. Let $\vec{%
F}^{\#}=(F_{1}^{\#},\cdots ,F_{j_{\max }}^{\#})$, $\vec{G}%
^{\#}=(G_{1}^{\#},\cdots ,G_{i_{_{\max }}}^{\#})$, where the $F_{j}^{\#}$
and $G_{i}^{\#}$ are semialgebraic maps from $(0,\delta )\rightarrow
\mathfrak{p}$. Suppose that
\begin{equation*}
\lbrack \lambda _{k}(\bar{x})](\vec{F}^{\#}(\bar{x}),\vec{G}^{\#}(\bar{x}%
))=0,
\end{equation*}%
for $k=1,\cdots ,k_{\max },\bar{x}\in (0,\delta )$; and that $[\mu _{l}(\bar{%
x})](\vec{F}^{\#}(\bar{x}),\vec{G}^{\#}(\bar{x}))$ is bounded on $(0,\delta
) $ and tends to zero as $\bar{x}\rightarrow 0$. Then there exist $\delta
^{\prime }>0$ and $\vec{F}=(F_{1},\cdots ,F_{j_{\max }})$, $\vec{G}%
=(G_{1},\cdots ,G_{i_{\max }})$ semialgebraic and in $C^m(\Omega _{\delta
^{\prime }}^{\text{closure}},\R^{j_{\max}})$ and $C^m(\Omega _{\delta
^{\prime }}^{\text{closure}},\R^{i_{\max}})$ respectively, with $J_{(0,0)}\vec{F}=0,J_{(0,0)}\vec{G}=0$%
, $G_{i}=\sum_{j}A_{ij}F_{j}+B_{i}$ and $j_{\bar{x}}\vec{F}=\vec{F}^{\#}(\bar{x}),j_{%
\bar{x}}\vec{G}=\vec{G}^{\#}(\bar{x})$, for all $\bar{x}\in (0,\delta ^{\prime })$.
(Note that here we have passed from $\delta$ to a smaller $\delta^{\prime }$%
.)
\end{itemize}
\end{lemma}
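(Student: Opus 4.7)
The plan is to extract the linear conditions on $y$-jets $j_{\bar x}\vec F,j_{\bar x}\vec G$ by substituting Taylor expansions into the relation $G_i=\sum_jA_{ij}F_j+B_i$, analyzing the residual via Puiseux expansion in $y$, and reducing to echelon form using Gaussian elimination with parameters (Lemma \ref{gep-lemma}); the converse is proved by constructing $\vec F$ from the prescribed jets and defining $\vec G$ through the equation.

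For \eqref{assertionI}, write $F_j(\bar x,y)=P_j(\bar x,y)+R_j(\bar x,y)$, where $P_j(\bar x,\cdot)$ is the degree-$m$ Taylor polynomial in $y$ (whose coefficients are the components of $j_{\bar x}F_j$) and $R_j=o(y^m)$ uniformly in $\bar x$ by uniform continuity of $\partial_y^mF_j$ on $\Omega_\delta^{\text{closure}}$; decompose $G_i$ analogously. Since $|A_{ij}|\le C$, substitution yields
\[
T_i(\bar x,y):=Q_i(\bar x,y)-\sum_jA_{ij}(\bar x,y)P_j(\bar x,y)-B_i(\bar x,y)=o(y^m)
\]
uniformly in $\bar x$ on compacta of $(0,\delta)$. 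For fixed $\bar x$ outside a finite exceptional set, the semialgebraic function $y\mapsto T_i(\bar x,y)$ admits a Puiseux expansion $\sum_\nu c^i_\nu(\bar x)y^{\nu/N}$ whose coefficients $c^i_\nu(\bar x)$ are affine in $(j_{\bar x}\vec F,j_{\bar x}\vec G)$ with semialgebraic $\bar x$-dependent coefficients; the vanishing $T_i=o(y^m)$ forces $c^i_\nu(\bar x)=0$ for every $\nu/N\le m$, a finite linear system in the jets. Apply Lemma \ref{gep-lemma} and use that a semialgebraic subset of $\R$ is a finite union of intervals: shrinking $\delta$ places $(0,\delta)$ in a single echelon piece with bounded pivot coefficients, whose echelon equations serve as the $\lambda_k$. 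The $\mu_l$ encode the hypothesis $J_{(0,0)}\vec F=0=J_{(0,0)}\vec G$: each jet component $(j_{\bar x}F_j)_l=\partial_y^lF_j(\bar x,0)$ is $o(\bar x^{m-l})$ as $\bar x\to 0^+$ (by Taylor) and bounded on $(0,\delta)$; rescaling by $\bar x^{-(m-l)}$ yields semialgebraic affine functionals that are bounded on $(0,\delta)$ and vanish at $0$.

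For \eqref{assertionII}, given semialgebraic $\vec F^\#,\vec G^\#$ satisfying the $\lambda_k,\mu_l$ conditions, set
\[
F_j(\bar x,y):=\sum_{l=0}^m\frac{y^l}{l!}[F_j^\#(\bar x)]_l,
\]
possibly modified by a semialgebraic cutoff controlling boundary values at $y=\psi(\bar x)$. The $\mu_l$ conditions, combined with Corollary \ref{corollarytolemma_onevariablegrowth} to propagate decay to derivatives of $F_j^\#$ and with Corollary \ref{corollary3.2} for a semialgebraic $C^m$ extension, guarantee that $F_j\in C^m(\Omega_{\delta'}^{\text{closure}})$ with $J_{(0,0)}\vec F=0$ and $j_{\bar x}\vec F=\vec F^\#(\bar x)$. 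Then define $G_i:=\sum_jA_{ij}F_j+B_i$ on $\Omega_{\delta'}$; this is semialgebraic and $C^\infty$ on the open region, and the $\lambda_k$ equations, read as Puiseux identities, ensure $j_{\bar x}\vec G=\vec G^\#(\bar x)$.

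The principal technical challenge is verifying that this $G_i$ extends to a $C^m$ function on $\Omega_{\delta'}^{\text{closure}}$ with $J_{(0,0)}\vec G=0$. Derivatives $\partial^\alpha(A_{ij}F_j)$ combine singular factors $|\partial^\beta A_{ij}|\le Cy^{-|\beta|}$ with derivatives of $F_j$ that vanish to prescribed orders at $y=0$; uniform boundedness and continuity at $y=0$ rely on cancellations encoded by the $\lambda_k$ identities propagating through all derivatives (not merely the zeroth-order Puiseux identity), and the behavior at $\bar x=0$ is controlled by the bounded-plus-vanishing nature of $\mu_l$. In effect, this step reverses the Puiseux analysis at the level of all derivatives simultaneously, and is the heart of the argument.
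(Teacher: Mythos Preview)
Your construction of the $\lambda_k$ via Puiseux coefficients is a reasonable alternative to the paper's bundle-theoretic route (the paper defines a semialgebraic bundle $\mathcal{H}$ over $[0,1]\times\{0\}$ whose fiber at $(\bar x,0)$ is the set of $(\vec P,\vec Q)$ with $y^{|\alpha|-m}\partial^\alpha\{\sum_j A_{ij}P_j+B_i-Q_i\}(\bar x,y)\to 0$ for all $|\alpha|\le m$, and then invokes Lemma~\ref{semialgebraic-bundle-lemma} in the one-dimensional case to produce the $\lambda_k$). Your Puiseux argument would need a uniform denominator $N$ across $\bar x$, which is available for semialgebraic families but should be stated.

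The genuine gap is your list of $\mu_l$. You include only the rescaled jet components $\bar x^{l-m}\partial_y^l F_j(\bar x,0)$ and the analogous $G$-functionals. The paper includes these \emph{and} an additional crucial family: for each fixed $\bar x$ it applies the Helly variant (Theorem~\ref{helly-theorem}) to the seminorms $|||(\vec P,\vec Q)|||_{\alpha,i,y}=|y^{|\alpha|-m}\partial^\alpha\{\sum_j A_{ij}P_j+B_i-Q_i\}(\bar x,y)|$ on $H(\bar x,0)$ to select finitely many test heights $y_1(\bar x),\dots,y_{\sigma_{\max}}(\bar x)\in(0,\psi(\bar x))$, chosen semialgebraically, with the property that the sup of these seminorms over all $y$ is controlled by the max over the $y_\sigma(\bar x)$. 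The functionals $(\vec p,\vec q)\mapsto y_\sigma(\bar x)^{a-m}\partial_y^a\{\sum_j A_{ij}(\bar x,\cdot)p_j+B_i(\bar x,\cdot)-q_i\}|_{y=y_\sigma(\bar x)}$ are then adjoined to the $\mu_l$.

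Without these test-point functionals, Part~\eqref{assertionII} cannot be closed. Here is why. Having set $F_j(\bar x,y)=\sum_s F_{js}(\bar x)y^s$, $G_i^{\#\#}(\bar x,y)=\sum_s G_{is}(\bar x)y^s$, and $G_i=\sum_j A_{ij}F_j+B_i$, the $\lambda_k$ conditions (via the bundle or via your Puiseux identities) give only that for each fixed $\bar x$, $y^{|\alpha|-m}\partial^\alpha(G_i-G_i^{\#\#})(\bar x,y)\to 0$ as $y\to 0^+$; this is pointwise in $\bar x$ and says nothing about uniformity as $\bar x\to 0^+$. To obtain $G_i\in C^m(\Omega_{\delta'}^{\text{closure}})$ with $J_{(0,0)}G_i=0$ you need $\partial^\alpha(G_i-G_i^{\#\#})(\bar x,y)\to 0$ as $(\bar x,y)\to(0,0)$. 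The paper gets this in two steps: first, the assumption that the test-point $\mu_l$'s are bounded and $o(1)$ at $0$ gives control of $y_\sigma(\bar x)^{|\alpha|-m}\partial^\alpha\{\cdots\}(\bar x,y_\sigma(\bar x))$, and the Helly domination property transfers this to the full sup over $y\in(0,\psi(\bar x))$; second, Lemma~\ref{semialgebraic-function-lemma} (the consequence of the Growth Lemma) upgrades the pointwise limit to a uniform one on $\bar x$-compacta. Your proposal omits both the Helly step and the invocation of Lemma~\ref{semialgebraic-function-lemma}; the phrase ``cancellations encoded by the $\lambda_k$ identities propagating through all derivatives'' does not supply a mechanism for uniformity in $\bar x$, and your $\mu_l$'s carry no information about the behavior at positive $y$. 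This is the missing idea.
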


The remainder of this section is devoted to a proof of Lemma \ref%
{main-lemma}.

Let $\delta>0$ be small enough to be picked below,

\begin{definition}
\label{bundledef} We define a bundle $\mathcal{H}$ over $[0,1]\times\{0%
\} \subset \mathbb{R}^2$. Here, $\mathcal{H}=(H(\bar{x},0))_{\bar{x}\in[0,1]%
} $, with $H(\bar{x},0) \subset \mathcal{P}^{j_{\max}+i_{\max}}$ defined as
follows.

\begin{itemize}
\item $H(0,0)=\{0\}$.

\item If $\bar{x} \in (0,1]$, then $(\vec{P},\vec{Q})=(P_1,\cdots,P_{j_{%
\max}},Q_1,\cdots,Q_{i_{\max}}) \in H(\bar{x},0)$ if and only if
\begin{equation*}
y^{|\alpha|-m}\partial^{\alpha}\left\{\sum_j A_{ij}P_j+B_i-Q_i \right\}(\bar{%
x},y) \rightarrow 0
\end{equation*}
as $y \rightarrow 0^+$, for each $|\alpha|\leq m$ and each $i$.
\end{itemize}
\end{definition}

We will show
that $\mathcal{H}$ is a bundle, i.e., $H(z)$ is a translate of an $\mathcal{R%
}_z$-submodule of $\mathcal{R}_z^{j_{\max}+i_{\max}}$ for each $z\in
[0,\delta]\times\{0\}$; and we will show that $J_{(\bar{x},0)}(\vec{F},\vec{G%
}) \in H(\bar{x},0)$ (each $\bar{x} \in [0,\delta]$) if $\vec{F},\vec{G}$
are as in \eqref{assertionI}.

Suppose $J_{(0,0)}(\vec{F},\vec{G})=0$, $\vec{F},\vec{G}$ are $C^m$ on $%
\Omega_{\delta}^{\text{closure}}$, $G_i=\sum_{j}A_{ij}F_j+B_i$ on $%
\Omega_{\delta}$. Let $\bar{x}\in (0,\delta]$. Then
\begin{equation*}
\partial^\alpha[A_{ij}(F_j-J_{(\bar{x},0)}F_j)](\bar{x},y)=o(y^{m-|\alpha|})
\end{equation*}
and
\begin{equation*}
\partial^\alpha[G_i-J_{(\bar{x},0)}G_i](\bar{x},y)=o(y^{m-|\alpha|})
\end{equation*}
on $\Omega_\delta$ for $|\alpha|\leq m$, by Taylor's theorem and our
estimates for $\partial^\alpha A_{ij}$. The above remarks imply that $%
\partial^\alpha \{\sum_j A_{ij} J_{(\bar{x},0)}F_j +B_i -J_{(\bar{x}%
,0)}G_i\}(\bar{x},0) =o(y^{m-|\alpha|}) $.

Therefore, $J_{(\bar{x},0)}(\vec{F},\vec{G})\in H(\bar{x},0)$ for $\bar{x}%
\in (0,\delta ]$. For $\bar{x}=0$, we just note that $J_{(0,0)}(\vec{F},\vec{%
G})=0\in H(0,0)$. That proves our assertion about $J_{(\bar{x},0)}(\vec{F},%
\vec{G})$.

Note that for $\bar{x}\not=0,$ $H\left( \bar{x}, 0\right) $ is a translate in $%
\mathcal{P}$ of
\begin{equation*}
I\left( \bar{x}\right) =\left\{ \left( \vec{P},\vec{Q}\right) :\partial
^{\alpha }\left( \sum_{j}A_{ij}P_{i}-Q_{i}\right) \left( \bar{x},y\right)
=o\left( y^{m-\left\vert \alpha \right\vert }\right) \text{, as }%
y\rightarrow 0^{+}\text{, }\left\vert \alpha \right\vert \leq m\right\}
\text{.}
\end{equation*}%
Let $\left( \vec{P},\vec{Q}\right) \in I\left( \bar{x}\right) $ and let $%
S\in \mathcal{P}$. Then for $\left\vert \alpha \right\vert \leq m,$ we have
\begin{equation*}
\partial ^{\alpha }\left( S\cdot \left[ \sum_{j}A_{ij}P_{j}-Q_{i}\right]
\right) \left( \bar{x},y\right) =o\left( y^{m-\left\vert \alpha \right\vert
}\right) ,
\end{equation*}%
hence
\begin{equation}
\partial ^{\alpha }\left( \sum_{j}A_{ij}\left( SP_{j}\right) -\left(
SQ_{i}\right) \right) \left( \bar{x},y\right) =o\left( y^{m-\left\vert
\alpha \right\vert }\right) \text{, as }y\rightarrow 0^{+}\text{.}
\label{+1}
\end{equation}

Also, our estimates on $\partial ^{\alpha }A_{ij},$ together with Taylor's
theorem, give
\begin{equation*}
\partial ^{\alpha }\left( A_{ij}\left( SP_{i}-J_{\left( \bar{x},0\right)
}\left( SP_{j}\right) \right) \right) \left( \bar{x},0\right) =o\left(
y^{m-\left\vert \alpha \right\vert }\right)
\end{equation*}%
and
\begin{equation*}
\partial ^{\alpha }\left( SQ_{i}-J_{\left( \bar{x},0\right) }\left(
SQ_{i}\right) \right) \left( \bar{x},0\right) =o\left( y^{m-\left\vert
\alpha \right\vert }\right) \text{ as }y\rightarrow 0^{+}\text{ for }%
\left\vert \alpha \right\vert \leq m\text{.}
\end{equation*}%
That is,
\begin{equation}
\partial ^{\alpha }\left( A_{ij}\left( SP_{j}-S\odot _{\left( \bar{x}%
,0\right) }P_{j}\right) \right) \left( \bar{x},y\right) =o\left(
y^{m-\left\vert \alpha \right\vert }\right)  \label{+2}
\end{equation}%
and
\begin{equation}
\partial ^{\alpha }\left( SQ_{i}-S\odot _{\left( \bar{x},0\right)
}Q_{i}\right) \left( \bar{x},0\right) =o\left( y^{m-\left\vert \alpha
\right\vert }\right) \text{ as }y\rightarrow 0^{+}\text{ for }\left\vert
\alpha \right\vert \leq m.  \label{+3}
\end{equation}

It now follows from (\ref{+1}), (\ref{+2}), and (\ref{+3}) that
\begin{equation*}
\partial ^{\alpha }\left( \sum_{j}A_{ij}\left[ S\odot _{\left( \bar{x}%
,0\right) }P_{j}\right] -\left[ S\odot _{\left( \bar{x},0\right) }Q_{i}%
\right] \right) \left( \bar{x},y\right) =o\left( y^{m-\left\vert \alpha
\right\vert }\right)
\end{equation*}%
as $y\rightarrow 0^{+}$, for each $\left\vert \alpha \right\vert \leq m$.

This completes the proof that the $I\left( \bar{x}\right) $ is a submodule, when $%
\bar{x}\not=0$.

For $\bar{x}=0,$ we just note that $\left\{ 0\right\} $ is an $\mathcal{R}%
_{\left( 0,0\right) }$-submodule of $\mathcal{R}_{\left( 0,0\right)
}^{j_{\max }+i_{\max }}.$

We have now shown that

\begin{itemize}
\item $\mathcal{H}=(H(\bar{x},0))_{\bar{x} \in [0,\delta]}$ is a bundle.

\item If $(\vec{F},\vec{G})$ is as in (I) of Lemma \ref{main-lemma}, then $(%
\vec{F},\vec{G})$ is a section of $\mathcal{H}$.

\item $H(\bar{x},0)\subset \mathcal{P}^{j_{\max }+i_{\max }}$ depends
semialgebraically on $\bar{x}$, since $A_{ij}$ and
$B_{i}$ are semialgebraic.
\end{itemize}

\begin{lemma}
\label{lemma-on-semialgebraic-bundles} Let $\mathcal{H}=(H(\bar{x},0))_{(%
\bar{x},0)\in \lbrack 0,\delta ]\times \{0\}}$ be a semialgebraic bundle, $%
\mathcal{H}=H(\bar{x},0)\subset \mathcal{P}^{j_{\max }+i_{\max }}$. Then
there exist semialgebraic functions $\lambda _{1},\cdots ,\lambda _{k_{\max
}}:(0,\delta )\rightarrow \text{Aff}$, and a finite set of bad points $\{%
\bar{\bar{x}}_{1}^{\text{bad}},\cdots ,\bar{\bar{x}}_{S}^{\text{bad}}\}$
such that the following holds for any $\bar{\bar{x}}\in \left( 0,\delta
\right) $ other than the bad points. Let $\left( \vec{F},\vec{G}\right)
=\left( F_{1},\cdots ,F_{j_{\max }},G_{1},\cdots ,G_{i_{\max }}\right) $ be $%
C^m$ in a neighborhood of $\left( \bar{\bar{x}},0\right) $ in $\mathbb{R}%
^{2}$. Then
\begin{equation*}
J_{\left( \bar{x},0\right) }\left( \vec{F},\vec{G}\right) \in H\left( \bar{x}%
,0\right) \text{ for all }\bar{x}\text{ in some neighborhood of }\bar{\bar{x}%
}
\end{equation*}%
if and only if
\begin{equation*}
\left[ \lambda _{k}\left( \bar{x}\right) \right] \left( j_{\bar{x}}\vec{F}%
,j_{\bar{x}}\vec{G}\right) =0\text{ for all }\bar{x}\text{ in some
neighborhood of }\bar{\bar{x}}, (k=1,\cdots,k_{\max}) \text{.}
\end{equation*}
\end{lemma}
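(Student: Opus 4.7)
The plan is to reduce the lemma to a one-dimensional application of Lemma \ref{semialgebraic-bundle-lemma}. Set $n=1$, $k=1$, $U=(0,\delta)$, and $\psi\equiv 0$ (which is Nash), with $D:=j_{\max}+i_{\max}$. Then $\hat{\psi}(x)=(x,0)$ and $\hat{U}=(0,\delta)\times\{0\}$, matching the open part of the base of $\mathcal{H}$. To meet the Glaeser-stability hypothesis of Lemma \ref{semialgebraic-bundle-lemma}, first replace $\mathcal{H}$ with its stable Glaeser refinement $\mathcal{H}^{(l^*)}$; this is still semialgebraic, and for each $\bar{\bar x}\in(0,\delta)$, local $C^m$ sections of $\mathcal{H}$ near $(\bar{\bar x},0)$ are exactly the local sections of $\mathcal{H}^{(l^*)}$ there. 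Isolated points at which $H^{(l^*)}(\bar x,0)$ becomes empty are absorbed into the bad set; on any one-dimensional semialgebraic arc of empty fibers (where no local section can exist) we adjoin a trivially inconsistent equation $\lambda_k(\bar x)\equiv 1$, which matches the vacuous falsity of the local-section side of the iff.

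Applying Lemma \ref{semialgebraic-bundle-lemma} to the Glaeser-stable bundle now produces a semialgebraic $U_{\text{bad}}\subset(0,\delta)$ of dimension $<1$ — hence a finite set, by the standard fact that a $0$-dimensional semialgebraic set is finite — together with Nash functions $A^i_{j\beta}(\bar x)$ and $G^i(\bar x)$ on $(0,\delta)\setminus U_{\text{bad}}$ such that for every closed interval $B\subset(0,\delta)\setminus U_{\text{bad}}$, a function in $C^m_{loc}(\mathbb{R}^2,\mathbb{R}^D)$ is a section over $\hat{B}$ iff $\sum_{|\beta|\le m,\,j} A^i_{j\beta}(\bar x)\,\partial_y^\beta F_j(\bar x,0)=G^i(\bar x)$ on $B$ for each $i$. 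Since $\partial_y^\beta F_j(\bar x,0)=\beta!\cdot(\text{coefficient of }y^\beta\text{ in }j_{\bar x}F_j)$, each such equation is affine in the jet $(j_{\bar x}\vec F, j_{\bar x}\vec G)\in\mathfrak{p}^{j_{\max}+i_{\max}}$, with coefficients semialgebraic in $\bar x$. I package these as the maps $\lambda_i:(0,\delta)\to\mathrm{Aff}$.

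I define the finite bad set $\{\bar{\bar x}_1^{\text{bad}},\dots,\bar{\bar x}_S^{\text{bad}}\}$ to be $U_{\text{bad}}$ together with the isolated empty-fiber points arising from the Glaeser step. For $\bar{\bar x}$ outside this set, a given $(\vec F,\vec G)$ that is $C^m$ in a neighborhood of $(\bar{\bar x},0)$ may be multiplied by a $C^m$ cutoff to produce a globally defined $C^m_{loc}(\mathbb{R}^2,\mathbb{R}^D)$ function agreeing with $(\vec F,\vec G)$ near $(\bar{\bar x},0)$; choosing a closed interval $B$ around $\bar{\bar x}$ inside $(0,\delta)\setminus U_{\text{bad}}$, the Lemma \ref{semialgebraic-bundle-lemma} characterization applied to this extension gives the desired equivalence: $J_{(\bar x,0)}(\vec F,\vec G)\in H(\bar x,0)$ in a neighborhood of $\bar{\bar x}$ iff $[\lambda_k(\bar x)](j_{\bar x}\vec F,j_{\bar x}\vec G)=0$ for all $k$ in a neighborhood of $\bar{\bar x}$.

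The real content lives in Lemma \ref{semialgebraic-bundle-lemma}; the main obstacle here is simply the bookkeeping around the Glaeser-refinement reduction — verifying that the dimension $<n$ bad set automatically becomes finite when $n=1$, and accommodating the possibility that $\mathcal{H}^{(l^*)}$ has a one-dimensional semialgebraic locus of empty fibers by inserting constant inconsistent equations there, so that the iff remains valid everywhere away from the finite bad set.
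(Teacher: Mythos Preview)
Your proposal is correct and takes essentially the same approach as the paper, which proves this lemma in one line: ``This is a 1-dimensional case of Lemma \ref{semialgebraic-bundle-lemma}.'' You have filled in considerably more detail than the paper does---in particular, you address the Glaeser-stability hypothesis of Lemma \ref{semialgebraic-bundle-lemma} (which the statement of the present lemma does not assume) by passing to the stable refinement and handling the possible empty-fiber locus; the paper's terse proof glosses over this bookkeeping, presumably because in its sole application (Definition \ref{bundledef}) the bundle in question is already known to admit sections.
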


\begin{proof}
This is a 1 dimensional case of Lemma \ref%
{semialgebraic-bundle-lemma}, whose proof can be found in Section \ref%
{section-semialgebraic-bundle}.
\end{proof}

\begin{proof}[Proof of Lemma \ref{main-lemma}]
We apply Lemma \ref{lemma-on-semialgebraic-bundles} to the bundle $\mathcal{H%
}$ defined in Definition \ref{bundledef}. By making $\delta $ smaller, we
may assume there are no bad points $\bar{\bar{x}}_{^{\text{bad}}}$. Thus, we
have achieved the following: There exist semialgebraic functions $\lambda
_{1},\cdots ,\lambda _{k_{\max }}:(0,\delta ]\rightarrow \text{Aff}$ such that for any $%
\bar{\bar{x}}\in (0,\delta )$ and any $(\vec{F},\vec{G})$ that is $%
C^m$ in a neighborhood of $(\bar{\bar{x}},0)$, we have
\begin{equation*}
J_{\left( \bar{x},0\right) }(\vec{F},\vec{G})\in H(\bar{x},0)\text{ for all }%
\bar{x}\text{ in some neighborhood of }\bar{\bar{x}}
\end{equation*}%
if and only if
\begin{equation*}
\left[ \lambda _{k}\left( \bar{x}\right) \right] j_{\bar{x}}\left( \vec{F},%
\vec{G}\right) =0\text{ for all }\bar{x}\text{ in some neighborhood of }\bar{%
\bar{x}}, (k=1,\cdots,k_{\max})\text{.}
\end{equation*}%
In particular, if $\left( \vec{F},\vec{G}\right) $ is as in \eqref{assertionI}, then
\begin{equation*}
\left[ \lambda _{k}\left( \bar{x}\right) \right] j_{\bar{x}}\left( \vec{F},%
\vec{G}\right) =0\text{ for all }\bar{x}\in (0,\delta ), (k=1,\cdots,k_{\max})\text{.}
\end{equation*}

Next, we apply Theorem \ref{helly-theorem} in Section \ref{hltv}.

Recall $H(\bar{x},0)$ is an affine space, so $\mathbb{R}\cdot H(\bar{x},0)$
is a vector space.

We regard $\mathbb{R}\cdot H(\bar{x},0)$ as the space of all $(\vec{P},\vec{Q%
},t)$ such that $\partial^\alpha \{\sum_j A_{ij}P_j+tB_i-Q_i \}(\bar{x}%
,y)=o(y^{m-|\alpha |})$ as $y \rightarrow 0^+$.

We define seminorms on $\mathbb{R}\cdot H(\bar{x},0)$ by
\begin{equation*}
|||(\vec{P},\vec{Q},t)|||_{\alpha,i,y}=\left|y^{|\alpha|-m}\partial^\alpha
\left\{\sum_j A_{ij}P_j+tB_i-Q_i \right\}(\bar{x},y)\right|
\end{equation*}
for fixed $\bar{x}$ and $0<y<\psi(\bar{x})$. Notice that on $H(\bar{x},0)$, the seminorm agrees with
\begin{equation*}
|||(\vec{P},\vec{Q})|||_{\alpha,i,y}=\left|y^{|\alpha|-m}\partial^\alpha
\left\{\sum_j A_{ij}P_j+B_i-Q_i \right\}(\bar{x},y)\right|
\end{equation*}
for fixed $\bar{x} \not=0$ and $0<y<\psi(\bar{x}), |\alpha|\leq m, i
=1,\cdots,i_{\max}$.

Note that
\begin{equation*}
\sup_{\alpha ,i,y}|||(\vec{P},\vec{Q})|||_{\alpha ,i,y}
\end{equation*}%
is bounded for fixed $\left( \vec{P},\vec{Q}\right) \in H\left( \bar{x}%
,0\right) $, by definition of $H\left( \bar{x},0\right) $.

Thus, by Theorem \ref{helly-theorem} in Section \ref{hltv}, for each $\bar{x}%
\in \left( 0,\delta \right) ,$ there exist $y_{\sigma }\in \left( 0,\psi
\left( \bar{x}\right) \right) $ $\left( \sigma =1,\cdots ,\sigma _{\max
}\right) $ with $\sigma _{\max }$ depending only on $i_{\max },j_{\max },m$
such that for any $\left( \vec{P},\vec{Q}\right) \in H\left( \bar{x}%
,0\right) $, we have
\begin{eqnarray}
&&\sup_{\substack{ 0<y<\psi \left( \bar{x}\right)  \\ \left\vert \alpha
\right\vert \leq m  \\ i=1,\cdots ,i_{\max }}}\left\vert y\right\vert
^{\left\vert \alpha \right\vert -m}\left\vert \partial ^{\alpha }\left\{
\sum_{j}A_{ij}P_{j}+B_{j}-Q_{i}\right\} \left( \bar{x},y\right) \right\vert
\notag \\
&\leq &C\max_{\substack{ \sigma =1,\cdots ,\sigma _{\max }  \\ \left\vert
\alpha \right\vert \leq m  \\ i=1,\cdots ,i_{\max }}}\left\vert y_{\sigma
}\right\vert ^{\left\vert \alpha \right\vert -m}\left\vert \partial ^{\alpha
}\left\{ \sum_{j}A_{ij}P_{j}+B_{j}-Q_{i}\right\} \left( \bar{x},y_{\sigma
}\right) \right\vert \text{.}  \label{!}
\end{eqnarray}

Moreover, (\ref{!}) is a semialgebraic condition. Therefore, we may take $%
y_{1},\cdots ,y_{\sigma }\in \left( 0,\psi \left( \bar{x}\right) \right) $
satisfying (\ref{!}) to depend semialgebraically on $\bar{x}\in \left(
0,\delta \right) $.

Because $0<y_{\sigma }\left( \bar{x}\right) <\psi \left( \bar{x}\right) \leq
\bar{x}$ for $\bar{x}\in \left( 0,\delta \right) $ and because $y_{\sigma
}\left( \bar{x}\right) $ depends semialgebraically on $\bar{x}$, we can take
$\delta$ small to achieve the estimates

\begin{itemize}
\item[\refstepcounter{equation}\text{(\theequation)}\label{Y1}] $\left\vert
\left( \frac{d}{dx}\right) ^{\alpha }y_{\sigma }\left( \bar{x}\right)
\right\vert \leq C\bar{x}^{1-\alpha }$ for $0\leq \alpha \leq m+100$, $%
\sigma =1,\cdots ,\sigma _{\max },$ $\bar{x}\in \left( 0,\delta \right) .$

\item[\refstepcounter{equation}\text{(\theequation)}\label{Y2}] $0<y_{\sigma
}(\bar{x})<\psi(\bar{x})\leq \bar{x}$ for $\sigma =1,\cdots ,\sigma _{\max }$, $\bar{x}\in
\left( 0,\delta \right) $.

\item[\refstepcounter{equation}\text{(\theequation)}\label{Y3}] $\bar{x}%
\mapsto y_{\sigma}(\bar{x})$ is a semialgebraic function.

\item[\refstepcounter{equation}\text{(\theequation)}\label{Y4}] For any $%
\bar{x}\in (0,\delta )$ and any $\left( \vec{P},\vec{Q}\right) =\left(
P_{1},\cdots ,P_{j_{\max }},Q_{1},\cdots ,Q_{i_{\max }}\right) \in H\left(
\bar{x},0\right) ,$ we have
\begin{eqnarray*}
&&\sup_{\substack{ 0<y<\psi \left( \bar{x}\right)  \\ \left\vert \alpha
\right\vert \leq m  \\ i=1,\cdots ,i_{\max }}}\left\vert y\right\vert
^{\left\vert \alpha \right\vert -m}\left\vert \partial ^{\alpha }\left\{
\sum_{j}A_{ij}P_{j}+B_{j}-Q_{i}\right\} \left( \bar{x},y\right) \right\vert
\\
&\leq &C\max_{\substack{ \sigma =1,\cdots ,\sigma _{\max }  \\ \left\vert
\alpha \right\vert \leq m  \\ i=1,\cdots ,i_{\max }}}\left\vert y_{\sigma
}\left( \bar{x}\right) \right\vert ^{\left\vert \alpha \right\vert
-m}\left\vert \partial ^{\alpha }\left\{
\sum_{j}A_{ij}P_{j}+B_{j}-Q_{i}\right\} \left( \bar{x},y_{\sigma }\left(
\bar{x}\right) \right) \right\vert \text{.}
\end{eqnarray*}%
with $C$ depending only on $i_{\max },j_{\max },m$.
\end{itemize}

Fix $\bar{x}\in (0,\delta )$, and let $(\vec{p},\vec{q})=\left( p_{1},\cdots
,p_{j_{\max }},q_{1},\cdots ,q_{i_{\max }}\right) \in \mathfrak{p}^{j_{\max
}+i_{\max }}$. Thus, each $p_{j}$ and $q_{i}$ is a polynomial in $y$ of
degree at most $m$. For $0\leq a\leq m,$ $\sigma =1,\cdots ,\sigma _{\max },$
$i=1,\cdots ,i_{\max },$ let
\begin{eqnarray*}
&&\mu _{a,\sigma ,i}^{\#}\left[ \bar{x}\right] \left( p_{1},\cdots
,p_{j_{\max }},q_{1},\cdots ,q_{i_{\max }}\right) \\
&=&\left. \left( y_{\sigma }\left( \bar{x}\right) \right) ^{a-m}\partial
_{y}^{a}\left\{ \sum_{j}A_{ij}\left( \bar{x},y\right) p_{j}\left( y\right)
+B_{i}\left( \bar{x},y\right) -q_{i}\left( y\right) \right\} \right\vert
_{y=y_{\sigma }\left( \bar{x}\right) }.
\end{eqnarray*}

Note that we don't take $x$-derivatives here, only $y$-derivatives.

The $\mu _{a,\sigma ,i}^{\#}(\bar{x})$ are affine functions from $\mathfrak{p%
}^{j_{\max }+i_{\max }}$ to $\mathbb{R}$; thus, each $\mu _{a,\sigma
,i}^{\#}(\bar{x})$ belongs to $\text{Aff}$. Let $\mu _{1}\left( \bar{x}%
\right) ,\cdots ,\mu _{l_{\max }}\left( \bar{x}\right) $ be an enumeration of
the $\mu _{a,\sigma ,i}^{\#}\left( \bar{x}\right) $, together with the
linear maps
\begin{eqnarray*}
\left( p_{1},\cdots ,p_{j_{\max }},q_{1},\cdots ,q_{i_{\max }}\right)
&\mapsto &\left( \bar{x}\right) ^{a-m}\partial _{y}^{a}p_{j}\left( 0\right)
\\
\left( p_{1},\cdots ,p_{j_{\max }},q_{1},\cdots ,q_{i_{\max }}\right)
&\mapsto &\left( \bar{x}\right) ^{a-m}\partial _{y}^{a}q_{i}\left( 0\right)
\text{.}
\end{eqnarray*}%
We will prove the following

\begin{itemize}
    \item[\LA{star}] Let $\vec{F},\vec{G}$ be as assumed in \eqref{assertionI}. Then, as $\bar{x}$ varies over $\left( 0,\delta \right)$, the $\left[ \mu _{l}\left( \bar{x}\right) \right] \left( j_{\bar{x}}%
\vec{F},j_{\bar{x}}\bar{G}\right)$ remain bounded, and these
quantities tend to zero as $\bar{x}$ tends to $0^{+}$.%
\end{itemize}

To prove (\ref{star}), we recall that
\begin{equation*}
\sum_{j}A_{ij}F_{j}+B_{i}-G_{i}=0\text{,}
\end{equation*}%
hence
\begin{eqnarray}
&&\mu _{a,\sigma ,i}^{\#}\left( \bar{x}\right) \left( j_{\bar{x}}\vec{F},j_{%
\bar{x}}\vec{G}\right)  \label{?0} \\
&=&-\left( y_{\sigma }\left( \bar{x}\right) \right) ^{a-m}\left. \partial
_{y}^{a}\left[ \sum_{i}A_{ij}\left( \bar{x},y\right) \left\{ F_{j}\left(
\bar{x},y\right) -j_{\bar{x}}F_{j}\left( y\right) \right\} -\left\{
G_{j}\left( \bar{x},y\right) -j_{\bar{x}}G_{j}\left( y\right) \right\} %
\right] \right\vert _{y=y_{\sigma }}.  \notag
\end{eqnarray}

Let $w_{F}\left( \bar{x}\right) =\max_{\left\vert \beta \right\vert
=m,j=1,\cdots ,j_{\max }}\left( \sup_{0\leq y\leq \psi \left( \bar{x}\right)
}\left[ \partial ^{\beta }F_{j}\left( \bar{x},y\right) \right] -\inf_{0\leq
y\leq \psi \left( \bar{x}\right) }\left[ \partial ^{\beta }F_{j}\left( \bar{x%
},y\right) \right] \right) $ and similarly define $w_{G}\left( \bar{x}%
\right) $ as above, with $G$ in place of $F.$

Because $\vec{F},\vec{G}$ belong to $C^m\left( {\Omega}^{\text{closure}}_{\delta
},\R^{j_{\max}}\right) $ and $C^m\left( {\Omega}^{\text{closure}}_{\delta
},\R^{i_{\max}}\right) $ respectively, while $\psi \left( \bar{x}\right) \rightarrow 0$ as $\bar{x}%
\rightarrow 0,$ we know that $w_{F}\left( \bar{x}\right) $, $w_{G}\left(
\bar{x}\right) $ are bounded as $\bar{x}$ varies over $\left( 0,\delta
\right) $, and moreover $w_{F}\left( \bar{x}\right) ,w_{G}\left( \bar{x}%
\right) \rightarrow 0$ as $\bar{x}\rightarrow 0^{+}$.

Taylor's theorem gives

\begin{itemize}
\item[\refstepcounter{equation}\text{(\theequation)}\label{?1}] $\left\vert
\partial _{y}^{a}\left[ F_{j}\left( \bar{x},y\right) -j_{\bar{x}}F_{j}\left(
y\right) \right] \right\vert \leq Cw_{F}\left( \bar{x}\right) \cdot y^{m-a}$
for $0\leq a\leq m$, $0<y<\psi \left( \bar{x}\right) ,$ $j=1,\cdots ,j_{\max
}$.

\item[\refstepcounter{equation}\text{(\theequation)}\label{?2}] $\left\vert
\partial _{y}^{a}\left\{ G_{i}\left( \bar{x},y\right) -j_{\bar{x}%
}G_{i}\left( y\right) \right\} \right\vert \leq Cw_{G}\left( \bar{x}\right)
\cdot y^{m-a}$ for $0\leq a\leq m,0<y<\psi \left( \bar{x}\right) ,i=1,\cdots
,i_{\max }$.
\end{itemize}

We recall that

\begin{itemize}
\item[\refstepcounter{equation}\text{(\theequation)}\label{?3}] $|\partial
_{y}^{a}A_{ij}(\bar{x},y)|\leq Cy^{-a}$ for $0\leq a\leq m,0<y<\psi \left(
\bar{x}\right) ,i=1,\cdots ,i_{\max },j=1,\cdots ,j_{\max }$.
\end{itemize}

Putting \eqref{?1},\eqref{?2},\eqref{?3} into \eqref{?0}, we find that%
\begin{equation*}
\left\vert \mu _{a,\sigma ,i}^{\#}\left( \bar{x}\right) \left( j_{\bar{x}}%
\vec{F},j_{\bar{x}}\vec{G}\right) \right\vert \leq Cw_{F}\left( \bar{x}%
\right) +Cw_{G}\left( \bar{x}\right) \text{,}
\end{equation*}%
hence the $\mu _{a,\sigma ,i}^{\#}\left( \bar{x}\right) \left( j_{\bar{x}}%
\vec{F},J_{\bar{x}}\vec{G}\right) $ remain bounded as $\bar{x}$ varies over $%
\left( 0,\delta \right) $, and these quantities tend to zero as $\bar{x}%
\rightarrow 0^{+}$.

Also, because $J_{\left( 0,0\right) }\vec{F}=0,J_{\left( 0,0\right) }\vec{G}%
=0,$ and $\vec{F},\vec{G}$ are in $C^m\left( {\Omega}_{\delta }^{\text{closure}},\R^{j_{\max}}\right)
$ and $C^m\left( {\Omega}_{\delta }^{\text{closure}},\R^{i_{\max}}\right)
$ respectively, we see that
\begin{equation*}
\left( \bar{x}\right) ^{a-m}\partial _{y}^{a}F_{j}\left( \bar{x},0\right)
,\left( \bar{x}\right) ^{a-m}\partial _{y}^{a}G_{i}\left( \bar{x},0\right) ,
\end{equation*}%
for $0\leq a\leq m$, remain bounded as $\bar{x}$ varies over $\left(
0,\delta \right) ,$ and these quantities tend to zero as $\bar{x}\rightarrow
0^{+}$.

Thus, all the $\mu _{l}\left( \bar{x}\right) \left( j_{\bar{x}}\vec{F},j_{%
\bar{x}}\vec{G}\right) $ remain bounded on $\left( 0,\delta \right) $ and
tend to zero as $\bar{x}\rightarrow 0^{+}$.

We have proven \eqref{star}. Thus, we have defined our $\lambda_1,\cdots,%
\lambda_{k_{\max}}$ and $\mu_1,\cdots,\mu_{l_{\max}}$ and we have proven
\eqref{assertionI}.

We now set out to prove \eqref{assertionII}.

Thus, let $\vec{F}^{\#}=\left( F_{1}^{\#},\cdots ,F_{j_{\max }}^{\#}\right) $
and $\vec{G}^{\#}=\left( G_{1}^{\#},\cdots ,G_{i_{\max }}^{\#}\right) $ be
as in \eqref{assertionII}.

Recall, each $F_{j}^{\#}$ and $G_{i}^{\#}$ is a semialgebraic map from $%
(0,\delta )$ into $\mathfrak{p}$, and moreover
\begin{equation*}
\left[ \lambda _{k}\left( \bar{x}\right) \right] \left( \vec{F}^{\#}\left(
\bar{x}\right) ,\vec{G}^{\#}\left( \bar{x}\right) \right) =0\text{ for }%
k=1,\cdots ,k_{\max },\text{ all }\bar{x}\in \left( 0,\delta \right); \text{ and}
\end{equation*}%
$\left[ \mu _{l}\left( \bar{x}\right) \right] \left( \vec{F}^{\#}\left( \bar{%
x}\right) ,\vec{G}^{\#}\left( \bar{x}\right) \right) $ is bounded as $\bar{x}
$ varies over $\left( 0,\delta \right) $ and tends to zero as $\bar{x}%
\rightarrow 0^{+}$ for each $l=1,\cdots ,l_{\max }$.

Then

\begin{itemize}
\item[\refstepcounter{equation}\text{(\theequation)}\label{Fjsharp}] $%
F_{j}^{\#}\left( \bar{x}\right) $ has the form $y\mapsto
\sum_{s=0}^{m}F_{js}\left( \bar{x}\right) y^{s}$ and

\item[\refstepcounter{equation}\text{(\theequation)}\label{Gisharp}] $%
G_{i}^{\#}\left( \bar{x}\right) $ has the form $y\mapsto
\sum_{s=0}^{m}G_{is}\left( \bar{x}\right) y^{s}$,
\end{itemize}
with $F_{js},G_{is}$
semialgebraic functions of one variable.
Taking $\delta $ small (depending on $\vec{F}^\#,\vec{G}^\#$), we may assume
the $F_{js},G_{is}$ are $C^\infty$ on $(0,\delta)$.

Now, we define $\vec{F}=\left( F_{1},\cdots ,F_{j_{\max }}\right) $, $\vec{G}%
=\left( G_{1},\cdots ,G_{i_{\max }}\right) ,\vec{G}^{\#\#}=\left(
G_{1}^{\#\#},\cdots ,G_{i_{\max }}^{\#\#}\right) ,$ where
\begin{equation}
F_{j}\left( \bar{x},y\right) =\sum_{s=0}^{m}F_{js}\left( \bar{x}\right) y^{s}
\label{fj}
\end{equation}%
for $\left( \bar{x},y\right) \in \left( 0,\delta \right) \times \mathbb{R}$,
$j=1,\cdots ,j_{\max },$%
\begin{equation}
G_{i}^{\#\#}\left( \bar{x},y\right) =\sum_{s=0}^{m}G_{is}\left( \bar{x}%
\right) y^{s}  \label{gi}
\end{equation}%
for $\left( \bar{x},y\right) \in \left( 0,\delta \right) \times \mathbb{R}$,
$i=1,\cdots ,i_{\max }$,
\begin{equation*}
G_{i}\left( \bar{x},y\right) =\sum_{j}A_{ij}\left( \bar{x},y\right)
F_{j}\left( \bar{x},y\right) +B_{i}\left( \bar{x},y\right)
\end{equation*}%
for $\left( \bar{x},y\right) \in \Omega _{\delta }$, $i=1,\cdots ,i_{\max }$.

Note that $F_{j},G_{i}^{\#\#}$ are $C^{\infty }$ functions on~$\left(
0,\delta \right) \times \mathbb{R}$ because the $F_{js},G_{is}$ are $%
C^{\infty }$ functions on $\left( 0,\delta \right) $.

The functions $F_{j},G_{i}^{\#\#},G_{i}$ are semialgebraic because $%
F_{j}^{\#},G_{i}^{\#}$ are semialgebraic.

Let $\bar{x}\in \left( 0,\delta \right) $. Then
\begin{equation}
j_{\bar{x}}F_{j} =F_{j}^{\#}\left( \bar{x}\right) \in
\mathfrak{p},j_{\bar{x}}G_{i}^{\#\#} =G_{i}^{\#}\left( \bar{x}%
\right) \in \mathfrak{p}.  \label{pml13-0}
\end{equation}

Therefore, for all $\bar{x}$ in a small neighborhood of a given $\overline{%
\overline{x}}\in \left( 0,\delta \right) $, we have
\begin{equation*}
\lambda _{k}(\bar{x})\left( j_{\bar{x}}\vec{F},j_{\bar{x}}\vec{G}^{\#\#}\right)
=\lambda _{k}(\bar{x})\left( \vec{F}^{\#}\left( \bar{x}\right) ,\vec{G}^{\#}\left(
\bar{x}\right) \right) =0
\end{equation*}%
for $k=1,\cdots ,k_{\max }$; the last equality is an assumption made in
\eqref{assertionII}.

Because $\vec{F},\vec{G}^{\#\#}$ are $C^{\infty }$ in a neighborhood of $%
\left( \overline{\overline{x}},0\right) ,$ the defining property of the $%
\lambda _{k}$ now tells us that
\begin{equation*}
\left( J_{\left( \bar{x},0\right) }\vec{F},J_{\left( \bar{x},0\right) }\vec{G%
}^{\#\#}\right) \in H\left( \bar{x},0\right)
\end{equation*}%
for all $\bar{x}$ in a small neighborhood of $\overline{\overline{x}}$.

Recalling that $\overline{\overline{x}}\in \left( 0,\delta \right) $ is
arbitrary, we conclude that%
\begin{equation}
\left( J_{\left( \bar{x},0\right) }\vec{F},J_{\left( \bar{x},0\right) }\vec{G%
}^{\#\#}\right) \in H\left( \bar{x},0\right) \text{ for all }\bar{x}\in
\left( 0,\delta \right) .  \label{pml13}
\end{equation}

By definition of $H\left( \bar{x},0\right) $ and by the estimates
\begin{eqnarray*}
\partial ^{\alpha }\left( F_{j}-J_{\left( \bar{x},0\right) }F_{j}\right)
\left( \bar{x},y\right) &=&o\left( y^{m-\left\vert \alpha \right\vert
}\right), \\
\partial ^{\alpha }\left( G_{i}^{\#\#}-J_{\left( \bar{x},0\right)
}G_{i}^{\#\#}\right) \left( \bar{x},y\right) &=&o\left( y^{m-\left\vert
\alpha \right\vert }\right), \text{ and} \\
\left\vert \partial ^{\alpha }A_{ij}\left( x,y\right) \right\vert &\leq
&Cy^{-\left\vert \alpha \right\vert }\text{,}
\end{eqnarray*}%
we therefore have the following:

\begin{itemize}
\item[\LA{newlabel}]For any $\bar{x}\in \left( 0,\delta \right) ,$ any $i=1,\cdots ,i_{\max },$
and any $\left\vert \alpha \right\vert \leq m$, the quantity
\begin{equation*}
y^{\left\vert \alpha \right\vert -m}\partial ^{\alpha }\left\{
\sum_{j}A_{ij}F_{j}+B_{i}-G_{i}^{\#\#}\right\} \left( \bar{x},y\right)
\end{equation*}%
is bounded as $y$ varies over $\left( 0,\psi \left( \bar{x}\right) \right) $
and tends to zero as $y\rightarrow 0^{+}$. \end{itemize}

We don't yet know that the above convergence is uniform in $\bar{x}.$

Next, we recall from \eqref{assertionII} the assumption that the $\mu _{l}\left(
\bar{x}\right) \left( \vec{F}^{\#}\left( \bar{x}\right) ,\vec{G}^{\#}\left(
\bar{x}\right) \right) $ remain bounded as $\bar{x}$ varies over $\left(
0,\delta \right) $ and moreover these quantities tend to zero as $\bar{x}%
\rightarrow 0^{+}$.

Thus, the quantities

\begin{equation}
\left( y_{\sigma }\left( \bar{x}\right) \right) ^{a-m}\partial
_{y}^{a}\left\{ \sum_{j}A_{ij}F_{j}+B_{i}-G_{i}^{\#\#}\right\} \left( \bar{x}%
,y_{\sigma }\left( \bar{x}\right) \right) \label{pml14}
\end{equation}%
for $0\leq a\leq m,i=1,\cdots ,i_{\max },\sigma =1,\cdots ,\sigma _{\max }$,
remain bounded as $\bar{x}$ varies over $\left( 0,\delta \right) $, and tend
to zero as $\bar{x}\rightarrow 0^{+}$.

Because those quantities are semialgebraic functions of one variable, we may
pass to a smaller $\delta $ and assert for any $b$, say $0\leq b\leq m$,
that
\begin{equation}
\left( \frac{d}{d\bar{x}}\right) ^{b}\left\{ y_{\sigma }\left( \bar{x}%
\right) ^{a-m}\partial _{y}^{a}\left[ \sum_{j}A_{ij}F_{j}+B_{i}-G_{i}^{\#\#}%
\right] \left( \bar{x},y_{\sigma }\left( \bar{x}\right) \right) \right\}
=o\left( \bar{x}^{-b}\right)  \label{t0}
\end{equation}%
as $\bar{x}\rightarrow 0^{+}$ and this quantity is bounded for $\bar{x}$
bounded away from $0$.

For $0\leq a+b\leq m,$ we will check that
\begin{equation}
\left( \bar{x}\right) ^{a+b-m}\left( \frac{d}{d\bar{x}}\right) ^{b}\left\{
\partial _{y}^{a}\left[ \sum_{j}A_{ij}F_{j}+B_{i}-G_{i}^{\#\#}\right] \left(
\bar{x},y_{\sigma }\left( \bar{x}\right) \right) \right\} =o\left( 1\right)
\label{t1}
\end{equation}%
as $\bar{x}\rightarrow 0^{+}$ and the left-hand side is bounded.

To see this, we write
\begin{eqnarray*}
&&\left( \frac{d}{d\bar{x}}\right) ^{b}\left\{ \partial _{y}^{a}\left[
\sum_{j}A_{ij}F_{j}+B_{i}-G_{i}^{\#\#}\right] \left( \bar{x},y_{\sigma
}\left( \bar{x}\right) \right) \right\} \\
&=&\left( \frac{d}{d\bar{x}}\right) ^{b}\left\{ \left( y_{\sigma }\left(
\bar{x}\right) \right) ^{m-a}\left( y_{\sigma }\left( \bar{x}\right) \right)
^{a-m}\partial _{y}^{a}\left[ \sum_{j}A_{ij}F_{j}+B_{i}-G_{i}^{\#\#}\right]
\left( \bar{x},y_{\sigma }\left( \bar{x}\right) \right) \right\} \\
&=&\sum_{b^{\prime }+b^{\prime \prime }=b}\text{coeff}\left( b^{\prime
},b^{\prime \prime }\right) \underset{\left( \dagger \right) }{\underbrace{%
\left[ \left( \frac{d}{d\bar{x}}\right) ^{b^{\prime }}\left( y_{\sigma
}\left( \bar{x}\right) \right) ^{m-a}\right] }}\cdot \\
&&\underset{\left( \ddagger \right) }{\underbrace{\left[ \left( \frac{d}{d%
\bar{x}}\right) ^{b^{\prime \prime }}\left\{ \left( y_{\sigma }\left( \bar{x}%
\right) \right) ^{a-m}\partial _{y}^{a}\left[
\sum_{j}A_{ij}F_{j}+B_{i}-G_{i}^{\#\#}\right] \left( \bar{x},y_{\sigma
}\left( \bar{x}\right) \right) \right\} \right] }}\text{.}
\end{eqnarray*}%
Since $y_{\sigma }\left( \bar{x}\right) $ is given by a Puiseux series for $%
\bar{x}\in \left( 0,\delta \right) $ (small enough $\delta $), $$\left(
\dagger \right) =O\left( y_{\sigma }\left( \bar{x}\right) \right)
^{m-a}\cdot \bar{x}^{-b^{\prime }}=O\left( y_{\sigma }\left( \bar{x}\right)
^{m-a-b^{\prime }}\right) ,$$ because $0<y_{\sigma }\left( \bar{x}\right)
<\psi \left( \bar{x}\right) \leq \bar{x}$. By \eqref{t0}, $\left( \ddagger
\right) $ is $o\left( \bar{x}^{-b^{\prime \prime }}\right) $ as $\bar{x}%
\rightarrow 0^{+}$.

So in fact, we get not only \eqref{t1} but the stronger result
\begin{equation}
\left( \frac{d}{d\bar{x}}\right) ^{b}\left\{ \partial _{y}^{a}\left[
\sum_{j}A_{ij}F_{j}+B_{i}-G_{i}^{\#\#}\right] \left( \bar{x},y_{\sigma
}\left( \bar{x}\right) \right) \right\} =o\left( y_{\sigma }\left( \bar{x}%
\right) ^{m-a}\cdot \bar{x}^{-b}\right)  \label{t2}
\end{equation}%
as $\bar{x}\rightarrow 0^{+};$ the left-hand side is bounded.

Introduce the vector field $X_{\sigma }=\frac{\partial }{\partial x}%
+y_{\sigma }^{\prime }\left( \bar{x}\right) \frac{\partial }{\partial y}$ on
$\mathbb{R}^{2}.$ We have
\begin{equation*}
\left( \frac{d}{d\bar{x}}\right) ^{b}\left\{ \mathcal{F}\left( \bar{x}%
,y_{\sigma }\left( \bar{x}\right) \right) \right\} =\left. \left( X_{\sigma
}\right) ^{b}\mathcal{F}\right\vert _{\left( \bar{x},y_{\sigma }\left( \bar{x%
}\right) \right) }\text{ for any }\mathcal{F}\in C_{loc}^{b}\left( \mathbb{R}%
^{2}\right) \text{.}
\end{equation*}%
Therefore, \eqref{t2} yields
\begin{equation}
\left( X_{\sigma }^{b}\partial _{y}^{a}\right) \left[
\sum_{j}A_{ij}F_{j}+B_{i}-G_{i}^{\#\#}\right] \left( \bar{x},y_{\sigma
}\left( \bar{x}\right) \right) =o\left( y_{\sigma }\left( \bar{x}\right)
^{m-a}\cdot \bar{x}^{-b}\right) \text{ as }\bar{x}\rightarrow 0^{+}
\label{t3}
\end{equation}%
and the left-hand side is bounded for all $\bar{x}$, for $%
a+b\leq m,\sigma =1,\cdots ,\sigma _{\max },i=1,\cdots ,i_{\max }$.

This implies that

\begin{itemize}
\item[\refstepcounter{equation}\text{(\theequation)}\label{t4}] $\left(
y_{\sigma }\left( \bar{x}\right) \right) ^{\left\vert \alpha \right\vert
-m}\partial ^{\alpha }\left[ \sum_{j}A_{ij}F_{j}+B_{i}-G_{i}^{\#\#}\right]
\left( \bar{x},y_{\sigma }\left( \bar{x}\right) \right) $ is bounded
on $(0,\delta )$ and tends to zero as $\bar{x}\rightarrow
0^{+}$, for $\left\vert \alpha \right\vert \leq m,i=1,\cdots ,i_{\max
},\sigma =1,\cdots ,\sigma _{\max }$.
\end{itemize}

Let $\alpha =\left( b,a\right) ,$ $\partial ^{\alpha }=\partial
_{x}^{b}\partial _{y}^{a}$.

We deduce \eqref{t4} from \eqref{t3} by induction on $b$. For $b=0,$ %
\eqref{t4} is the same as \eqref{t3}.

Assume we know \eqref{t4} for all $b^{\prime }<b.$ We prove \eqref{t4} for
the given $b,$ using our induction hypothesis for $b^{\prime }$, together
with \eqref{t3}.

The quantity
\begin{equation}
X_{\sigma }^{b}\partial _{y}^{a}\left\{
\sum_{j}A_{ij}F_{j}+B_{i}-G_{i}^{\#\#}\right\} \left( \bar{x},y_{\sigma
}\left( \bar{x}\right) \right)  \label{LHS}
\end{equation}%
is a sum of terms of the form
\begin{equation}
\left( \partial _{x}^{b_{1}}y_{\sigma }\left( \bar{x}\right) \right) \cdot
\cdots \cdot \left( \partial _{x}^{b_{\nu }}y_{\sigma }\left( \bar{x}\right)
\right) \cdot \partial _{x}^{\bar{b}}\partial _{y}^{a+\nu }\left\{
\sum_{j}A_{ij}F_{j}+B_{i}-G_{i}^{\#\#}\right\} \left( \bar{x},y_{\sigma
}\left( \bar{x}\right) \right)  \label{RHS}
\end{equation}%
with $b_{t}\geq 1$ each $t$, $b_{1}+\cdots +b_{\nu }+\bar{b}=b.$

Note $\bar{b}+\left( a+\nu \right) =a+\bar{b}+b_{1}+\cdots +b_{\nu }-\left(
b_{1}-1\right) -\cdots -\left( b_{\nu }-1\right) \leq a+b$.

We know that $\eqref{LHS}=o\left( y_{\sigma }\left( \bar{x}\right)
^{m-a-b}\right) $ by \eqref{t3}.

If $\bar{b}<b,$ then by our induction hypothesis, the term \eqref{RHS} is
dominated by
\begin{eqnarray*}
&&O\left( \overset{\text{Here again we use }0<y_{\sigma }<\bar{x}\text{.}}{%
\overbrace{y_{\sigma }\left( \bar{x}\right) ^{-\left[ b_{1}-1\right] -\cdots
-\left[ b_{\nu }-1\right] }}}\right) \cdot o\left( y_{\sigma }\left( \bar{x}%
\right) ^{m-\left[ a+\nu \right] -\bar{b}}\right) \\
&=&o\left( y_{\sigma }\left( \bar{x}\right) ^{m-a-\bar{b}-b_{1}-\cdots
-b_{\nu }}\right) =o\left( y_{\sigma }\left( \bar{x}\right) ^{m-a-b}\right)
\text{.}
\end{eqnarray*}%
Therefore, in the equation $\eqref{LHS}=\sum \eqref{RHS}$, all terms are $%
o\left( y_{\sigma }\left( \bar{x}\right) ^{m-a-b}\right) $, except possibly
the term arising from $\bar{b}=b$, which is
\begin{equation*}
\partial _{x}^{b}\partial _{y}^{a}\left\{
\sum_{j}A_{ij}F_{j}+B_{i}-G_{i}^{\#\#}\right\} \left( \bar{x},y_{\sigma
}\left( \bar{x}\right) \right) \text{.}
\end{equation*}%
Therefore,
\begin{equation*}
\partial _{x}^{b}\partial _{y}^{a}\left\{
\sum_{j}A_{ij}F_{j}+B_{i}-G_{i}^{\#\#}\right\} \left( \bar{x},y_{\sigma
}\left( \bar{x}\right) \right) =o\left( y_{\sigma }\left( \bar{x}\right)
^{m-a-b}\right) \text{, as }\bar{x}\rightarrow 0^{+}\text{.}
\end{equation*}%
This completes our induction on $b$, proving \eqref{t4}$.$

Thus,

\begin{itemize}
\item[\refstepcounter{equation}\text{(\theequation)}\label{pml18-1}] $\max_{\substack{ \sigma =1,\cdots ,\sigma _{\max }  \\ i=1,\cdots ,i_{\max
}  \\ \left\vert \alpha \right\vert \leq m}}\left( y_{\sigma }\left( \bar{x}%
\right) \right) ^{\left\vert \alpha \right\vert -m}\left\vert \partial
^{\alpha }\left\{ \sum_{j}A_{ij}F_{j}+B_{i}-G_{i}^{\#\#}\right\} \left( \bar{%
x},y_{\sigma }\left( \bar{x}\right) \right) \right\vert \text{ }$ is bounded on $\left( 0,\delta \right)$ and tends to zero as $\bar{x}$  tends to $0^{+}$.
\end{itemize}
Recall that our $\mu _{l}(\bar{x})$ include the affine maps $(p_{1},\cdots
,p_{j_{\max }},q_{1},\cdots ,q_{i_{\max }})\mapsto \bar{x}^{a-m}\partial
_{y}^{a}p_{j}(0)$ and $(p_{1},\cdots ,p_{j_{\max }},q_{1},\cdots ,q_{i_{\max
}})\mapsto \bar{x}^{a-m}\partial _{y}^{a}q_{i}(0)$ for $0\leq a\leq m.$ Our
assumption on the $\mu $'s made in \eqref{assertionII}
tells us therefore that $\bar{x}^{a-m}\partial _{y}^{a}\left(
F_{j}^{\#}\left( \bar{x}\right) \right) \left( 0\right) $ and $\bar{x}%
^{a-m}\partial _{y}^{a}\left( G_{i}^{\#}\left( \bar{x}\right) \right) \left(
0\right) $ are bounded on $\left( 0,\delta \right) $ and tend to zero as $%
\bar{x}\rightarrow 0^{+}$.

That is,

\begin{itemize}
\item[\refstepcounter{equation}\text{(\theequation)}\label{pml19}] $\bar{x}%
^{s-m}F_{js}\left( \bar{x}\right) ,\bar{x}^{s-m}G_{is}\left( \bar{x}\right) $
are bounded on $\left( 0,\delta \right) $ and tend to zero as $\bar{x}%
\rightarrow 0^{+}$. $\left( 0\leq s\leq m\right) $.

\item[\refstepcounter{equation}\text{(\theequation)}\label{pml20}] Because $%
F_{js},G_{js}$ are semialgebraic functions of one variable, it follows that,
for $s,t\leq m$, the functions
\begin{equation*}
\left( \frac{d}{d\bar{x}}\right) ^{t}F_{js}\left( \bar{x}\right) ,\left(
\frac{d}{d\bar{x}}\right) ^{t}G_{is}\left( \bar{x}\right)
\end{equation*}%
are bounded on $\left( 0,\delta \right) $ if $s+t\leq m$ and are $o\left(
\bar{x}^{m-s-t}\right) $ as $\bar{x}\rightarrow 0^{+}$ (even if $s+t>m$).
\end{itemize}

Recalling now the definitions of the $F_{j}$ and $G_{i}^{\#\#}$ in terms of
the $F_{j},G_{is}$ (see \eqref{fj}, \eqref{gi}), we conclude that
\begin{eqnarray*}
\partial _{\bar{x}}^{t}\partial _{y}^{s}F_{j}\left( \bar{x},y\right)
&=&\sum_{m\geq \underline{s}\geq s}\left[ \left( \frac{d}{d\bar{x}}\right)
^{t}F_{j\underline{s}}\left( \bar{x}\right) \right] \left( \text{coefficient
}\left( \underline{s},s\right) \right) \cdot y^{\underline{s}-s} \\
&=&\sum_{m\geq \underline{s}\geq s}o\left( \bar{x}^{m-t-\underline{s}%
}\right) \cdot y^{\underline{s}-s}\text{.}
\end{eqnarray*}%
If $s+t=m,$ then this is equal to $o\left( \frac{y}{\bar{x}}\right) ^{%
\underline{s}-s}=o\left( 1\right) $ for $0<y<\psi \left( \bar{x}\right) \leq
\bar{x}$.

Therefore, for $\left\vert \beta \right\vert =m,$ we have $\left\vert
\partial ^{\beta }F_{j}\left( \bar{x},y\right) \right\vert =o\left( 1\right)
$ as $\left( \bar{x},y\right) \in \Omega _{\delta }$ tends to zero.

Similarly, $\left\vert \partial ^{\beta }G_{i}^{\#\#}\left( \bar{x},y\right)
\right\vert =o\left( 1\right) $ as $\left( \bar{x},y\right) \in \Omega
_{\delta }$ tends to zero.

That is, for $\left\vert \beta \right\vert =m$, the functions $\partial
^{\beta }F_{j}\left( \bar{x},y\right) $ and $\partial ^{\beta
}G_{i}^{\#\#}\left( \bar{x},y\right) $ are bounded on $\Omega _{\delta }$
and they tend to zero as $\bar{x}\rightarrow 0^{+}$ (keeping $\left( \bar{x}%
,y\right) \in \Omega _{\delta }$).

Let $\mathcal{E}\left( \bar{x}\right) =\sup \left\{ \left\vert \partial
^{\beta }F_{j}\left( \bar{x},y\right) \right\vert ,\left\vert \partial
^{\beta }G_{i}^{\#\#}\left( \bar{x},y\right) \right\vert :\left\vert \beta
\right\vert =m,0<y<\psi \left( \bar{x}\right) \text{ (all } i, j ) \right\} .$

Then
\begin{equation}
\mathcal{E}\left( \bar{x}\right) \text{is bounded on }\left( 0,\delta
\right) \text{ and tends to zero as }\bar{x}\rightarrow 0^{+}.  \label{b1}
\end{equation}

By Taylor's theorem,
\begin{equation*}
\left\vert \partial ^{\alpha }\left\{ F_{j}-J_{\left( \bar{x},0\right)
}F_{j}\right\} \left( \bar{x},y\right) \right\vert \leq Cy^{m-\left\vert
\alpha \right\vert }\mathcal{E}\left( \bar{x}\right) \text{ for }\left\vert
\alpha \right\vert \leq m,\left( \bar{x},y\right) \in \Omega _{\delta }\text{%
.}
\end{equation*}%
Recall that
\begin{equation*}
\left\vert \partial ^{\alpha }A_{ij}\left( \bar{x},y\right) \right\vert \leq
Cy^{-\left\vert \alpha \right\vert }\text{ for }\left\vert \alpha
\right\vert \leq m\text{ and }\left( \bar{x},y\right) \in \Omega _{\delta }%
\text{.}
\end{equation*}

Just as we estimated the functions $F_{j}$ above, we have from Taylor's theorem
that
\begin{equation*}
\left\vert \partial ^{\alpha }\left\{ G_{i}^{\#\#}-J_{\left( \bar{x}%
,0\right) }G_{i}^{\#\#}\right\} \left( \bar{x},y\right) \right\vert \leq
Cy^{m-\left\vert \alpha \right\vert }\mathcal{E}\left( \bar{x}\right) \text{
for }\left\vert \alpha \right\vert \leq m,\left( \bar{x},y\right) \in \Omega
_{\delta }\text{.}
\end{equation*}%
Combining these estimates, we see that
\begin{eqnarray}
&&\left\vert \partial ^{\alpha}\left\{ \sum_{j}A_{ij}\left( F_{j}-J_{\left( \bar{x%
},0\right) }F_{j}\right) -\left( G_{i}^{\#\#}-J_{\left( \bar{x},0\right)
}G_{i}^{\#\#}\right) \right\} \left( x,y\right) \right\vert  \notag \\
&\leq &Cy^{m-\left\vert \alpha \right\vert }\mathcal{E}\left( \bar{x}\right)
\text{ for }\left\vert \alpha \right\vert \leq m,\left( \bar{x},y\right) \in
\Omega _{\delta }\text{.}  \label{b2}
\end{eqnarray}%
Combining \eqref{pml18-1}, \eqref{b1}, \eqref{b2}, we see that
\begin{eqnarray}
&&\left( y_{\sigma }\left( \bar{x}\right) \right) ^{\left\vert \alpha
\right\vert -m}\partial ^{\alpha }\left\{ \sum_{j}A_{ij}\left[ J_{\left(
\bar{x},0\right) }F_{j}\right] +B_{i}-\left[ J_{\left( \bar{x},0\right)
}G_{i}^{\#\#}\right] \right\} \left( \bar{x},y_{\sigma }\left( \bar{x}%
\right) \right)  \label{pml22} \\
&&\text{is bounded on }\left( 0,\delta \right) \text{ and tends to 0 as }%
\bar{x}\text{ tends to }0^{+}\text{.}  \notag
\end{eqnarray}

Recall that $\left( J_{(\bar{x},0)}\vec{F},J_{\left( \bar{x},0\right) }\vec{G}%
^{\#\#}\right) \in H\left( \bar{x}\right) $ for all $\bar{x}\in (0,\delta ]$
(see \eqref{pml13}).

The above results, together with the property \eqref{Y4} of the $y_{\sigma
}\left( \bar{x}\right) $ now tells us that

\begin{itemize}
\item[\refstepcounter{equation}\text{(\theequation)}\label{pml22-2}] $%
y^{\left\vert \alpha \right\vert -m}\partial ^{\alpha }\left\{
\sum_{j}A_{ij}\left( J_{\left( \bar{x},0\right) }F_{j}\right) +B_{i}-\left(
J_{\left( \bar{x},0\right) }G_{i}^{\#\#}\right) \right\} \left( \bar{x}%
,y\right) $ is bounded on $\Omega _{\delta }$ and tends to zero as $\left(
\bar{x},y\right) \in \Omega _{\delta }$ tends to zero.
\end{itemize}

Together with \eqref{b1}, \eqref{b2}, this yields the following result

\begin{itemize}
\item[\refstepcounter{equation}\text{(\theequation)}\label{pml23-1}] $%
y^{\left\vert \alpha \right\vert -m}\partial ^{\alpha }\left\{
\sum_{j}A_{ij}F_{j} +B_{i}- G_{i}^{\#\#} \right\} \left( \bar{x},y\right) $
is bounded on $\Omega _{\delta }$ and tends to zero as $\left( \bar{x}%
,y\right) \in \Omega _{\delta }$ tends to zero. Here, $i=1,\cdots,i_{\max}$
and $|\alpha|\leq m$ are arbitrary.
\end{itemize}

From \eqref{newlabel}, we have

\begin{itemize}
\item[\refstepcounter{equation}\text{(\theequation)}\label{pml23-2}] $%
\lim_{y \rightarrow 0^+} y^{|\alpha|-m}\partial^\alpha \left( \sum_j A_{ij}
F_j +B_i-G_i^{\#\#}\right)(x,y)=0$ for each fixed $x \in (0,\delta)$.
\end{itemize}

The functions $A_{ij},F_j,B_i, G_i^{\#\#}$ are semialgebraic. Therefore, by
Lemma \ref{semialgebraic-function-lemma}, there exist a positive integer $K$
and a semialgebraic function of one variable $\mathcal{A}(x)$ such that

\begin{itemize}
\item[\refstepcounter{equation}\text{(\theequation)}\label{pml23-3}] $\left|
y^{|\alpha|-m}\partial^\alpha \left( \sum_j A_{ij} F_j
+B_i-G_i^{\#\#}\right)(x,y)\right| \leq \mathcal{A}(x) \cdot y^{\frac{1}{K}}$ for
all $(x,y) \in \Omega_\delta$, $|\alpha|\leq m, i=1,\cdots,i_{\max}$.
\end{itemize}

Taking $\delta$ smaller, we may assume $\mathcal{A}(x)$ is $C^\infty$ on $%
(0,\delta]$.

Consequently, $y^{|\alpha |-m}\partial ^{\alpha }\left(
\sum_{j}A_{ij}F_{j}+B_{i}-G_{i}^{\#\#}\right) (x,y)$ tends to zero as $%
y\rightarrow 0^{+}$, uniformly as $x$ varies over $(\varepsilon ,\delta )$ for
any $\varepsilon >0$.

Recalling that $G_{i}=\sum_{j}A_{ij}F_{j}+B_{i}$, we see that for $\left\vert \alpha \right\vert \leq m, i=1,\cdots ,i_{\max },$
\begin{equation}
y^{|\alpha |-m}\partial ^{\alpha }\left\{ G_{i}-G_{i}^{\#\#}\right\} \left(
x,y\right) \rightarrow 0 \label{doublestars}
\end{equation}
as $y\rightarrow 0^{+}$ uniformly for $x$ in each
interval $\left( \varepsilon ,\delta \right) $.

Recalling that $G_{i}^{\#\#}$ belongs to $C^{\infty }$ in a neighborhood of $%
\left( x,0\right) $ (each $x\in \left( 0,\delta \right) $), we conclude that
the derivatives $\partial ^{\alpha }G_{i}\left( x,y\right) $ ($\left\vert
\alpha \right\vert \leq m,i=1,\cdots ,i_{\max }$), initially defined on $%
\Omega _{\delta }=\left\{ \left( x,y\right) :0<x<\delta ,0<y<\psi \left(
x\right) \right\} $ extend to continuous functions on
\begin{equation}
    \Omega _{\delta }^{++}\equiv \left\{ \left( x,y\right) :0<x<\delta ,0\leq
y<\psi \left( x\right) \right\}. \label{Om++}
\end{equation}

Next, recall that $F_{js}$ is $C^{\infty }$ on $\left( 0,\delta \right) $
and that we assume that $\left\vert \partial ^{\alpha }A_{ij}\left(
x,y\right) \right\vert ,\left\vert \partial ^{\alpha }B_{i}\left( x,y\right)
\right\vert \leq Cy^{-\left\vert \alpha \right\vert }$ on

\begin{equation}\Omega
^{+}=\left\{ \left( x,y\right) :0<x<\delta ,0<y\leq \psi \left( x\right)
\right\} \label{omega+} \end{equation} on which the functions $\partial ^{\alpha }A_{ij},\partial
^{\alpha }B_{i}$ are assumed to be continuous.

We defined
\begin{eqnarray*}
G_{i} &=&\sum_{j}A_{ij}F_{j}+B_{i} \\
&=&\sum_{j}A_{ij}\left( x,y\right) \left[ \sum_{s=0}^{m}F_{js}\left(
x\right) y^{s}\right] +B_{i}\left( x,y\right) \text{.}
\end{eqnarray*}

The above remarks (and the fact that $\psi \left( x\right) \not=0$ for $x\in
\left( 0,\delta \right) $) show that $\partial ^{\alpha }G_{i}$ extends to
a continuous function on $\Omega ^{+}$ (see \eqref{omega+}), for $\left\vert \alpha
\right\vert \leq m,i=1,\cdots ,i_{\max }$.

Combining our results for $\Omega ^{+}$ (see \eqref{omega+}) and for $\Omega ^{++}$ (see \eqref{Om++}), we see that $%
\partial ^{\alpha }G_{i}$ extends to a continuous function on $\Omega _{%
\frac{2\delta }{3}}^{\text{closure}}\setminus \left\{ \left( 0,0\right)
\right\} $ for each $i=1,\cdots ,i_{\max },\left\vert \alpha \right\vert
\leq m$.

Also, $\partial ^{\alpha }F_{i}$ is a continuous function on $\Omega _{\frac{%
2}{3}\delta }^{\text{closure}}\setminus \left\{ \left( 0,0\right) \right\} $
because $F_{i}$ is $C^{\infty }$ on $\left( 0,\delta \right) \times \mathbb{R%
}$.

By \eqref{pml19}, we have $G_{is}(x)=o(x^{m-s})$ ($0\leq s\leq m$) on $%
(0,\delta )$. Because $G_{is}$ is semialgebraic, it follows that after
possibly reducing $\delta $, we have%
\begin{equation*}
\left( \frac{d}{dx}\right) ^{t}G_{is}\left( x\right) =o\left(
x^{m-s-t}\right) \text{ for }0\leq t\leq m,0\leq s\leq m,i=1,\cdots ,i_{\max
}\text{.}
\end{equation*}

Because $G_{i}^{\#\#}\left( x,y\right) =\sum_{\underline{s}=0}^{m}G_{i%
\underline{s}}\left( x\right) y^{\underline{s}}$ and $0<y<\psi \left(
x\right) \leq x$ on $\Omega _{\delta }$, we have on $\Omega _{\delta }$ that
\begin{eqnarray*}
\left\vert \partial _{x}^{t}\partial _{y}^{s}G_{i}^{\#\#}\left( x,y\right)
\right\vert &=&\left\vert \sum_{\underline{s}=s}^{m}\text{coeff}\left(
\underline{s},s\right) \cdot \left( \frac{d}{dx}\right) ^{t}G_{i\underline{s}%
}\left( x\right) \cdot y^{\underline{s}-s}\right\vert \\
&=&o\left( \sum_{\underline{s}=s}^{m}x^{m-\underline{s}-t}\cdot y^{%
\underline{s}-s}\right) \\
&=&o\left(\sum_{\underline{s}=s}^{m}x^{m-\underline{s}-t}\cdot x^{\underline{s}-s}\right)
\\
&=&o\left( x^{m-s-t}\right) \text{ on }\Omega _{\delta }\text{ for }s,t\leq m%
\text{.}
\end{eqnarray*}%
In particular,

\begin{itemize}
\item[\refstepcounter{equation}\text{(\theequation)}\label{pml26}] $\partial
^{\alpha }G_{i}^{\#\#}\left( x,y\right) \rightarrow 0$ as $\left( x,y\right)
\in \Omega _{\delta }$ tends to $\left( 0,0\right) $ for $\left\vert \alpha
\right\vert \leq m,i=1,\cdots ,i_{\max }$.
\end{itemize}

On the other hand, recalling the definition $G_{i}=\sum_{j}A_{ij}F_{j}+B_{i}$%
, we see from \eqref{pml23-1} that $\partial ^{\alpha }\left(
G_{i}-G_{i}^{\#\#}\right) \left( x,y\right) \rightarrow 0$ as $\left(
x,y\right) \in \Omega _{\delta }$ tends to $\left( 0,0\right) $ for each $%
\left\vert \alpha \right\vert \leq m$. Together with \eqref{pml26}, this
shows that $\partial ^{\alpha }G_{i}\left( x,y\right) \rightarrow 0$ as $%
\left( x,y\right) \in \Omega _{\delta }$ tends to $\left( 0,0\right) $ for
each $\left\vert \alpha \right\vert \leq m$.

Next, recall from \eqref{pml19} that $F_{js}(x)=o(x^{m-s})$ for $x\in
(0,\delta )$, $j=1,\cdots ,j_{\max },s=0,\cdots ,m$.

Because the $F_{jk}$ are semialgebraic functions of one variable, we conclude
(after reducing $\delta $) that $\left( \frac{d}{dx}\right) ^{t}F_{js}\left(
x\right) =o\left( x^{m-s-t}\right) $ on $\left( 0,\delta \right) $ for $%
t\leq m$.

Now, for $s+t\leq m$ and $\left( x,y\right) \in \Omega _{\delta }$ (hence $%
0<y<\psi \left( x\right) \leq x$), we have
\begin{eqnarray*}
\left\vert \left( \frac{\partial }{\partial y}\right) ^{s}\left( \frac{%
\partial }{\partial x}\right) ^{t}F_{j}\left( x,y\right) \right\vert
&=&\left\vert \left( \frac{\partial }{\partial y}\right) ^{s}\left( \frac{%
\partial }{\partial x}\right) ^{t}\sum_{\underline{s}=0}^{m}F_{j\underline{s}%
}\left( x\right) y^{\underline{s}}\right\vert \\
&=&\left\vert \sum_{\underline{s}=s}^{m}\text{coeff}\left( \underline{s}%
,s\right) \left[ \left( \frac{d}{dx}\right) ^{t}F_{j\underline{s}}\left(
x\right) \right] \cdot y^{\underline{s}-s}\right\vert \\
&\leq &C\sum_{\underline{s}=s}^{m}\left\vert \left( \frac{d}{dx}\right)
^{t}F_{j\underline{s}}\left( x\right) \right\vert \cdot x^{\underline{s}-s}
\\
&=&o\left( \sum_{\underline{s}=0}^{m}x^{m-\underline{s}-t}x^{\underline{s}-s}\right)
=o\left( x^{m-s-t}\right) \text{.}
\end{eqnarray*}%
Thus, for $\left\vert \alpha \right\vert \leq m$, and $j=1,\cdots ,j_{\max
}, $ we have
\begin{equation*}
\partial ^{\alpha }F_{j}\left( x,y\right) \rightarrow 0\text{ as }\left(
x,y\right) \in \Omega _{\delta }\text{ tends to }\left( 0,0\right) \text{.}
\end{equation*}

We now know the following: $G_{i}=\sum_{j}A_{ij}F_{j}+B_{i}$ on $\Omega
_{\delta }.$ The $F_{j}$ and $G_{i}$ are semialgebraic on $\Omega _{\delta }$

For $\left\vert \alpha \right\vert \leq m$, the derivatives $\partial
^{\alpha }F_{j},\partial ^{\alpha }G_{i}$ extend to continuous functions on $%
\Omega _{2\delta /3}^{\text{closure}}\setminus \left\{ \left( 0,0\right)
\right\} $. For $\left\vert \alpha \right\vert \leq m,$ the derivatives $%
\partial ^{\alpha }F_{j}\left( z\right) $, $\partial ^{\alpha }G_{i}\left(
z\right) $ tend to zero as $z\in \Omega _{\delta }$ tends to zero.

It follows that the $F_{j}$ and $G_{i}$ extend from $\Omega _{\delta /2}$ to semialgebraic
functions in $C^m\left( \Omega _{\delta /2}^{\text{closure}}\right) $ and
those functions all have $m$-jet zero at the origin. We extend $F_j, G_i$ to semialgebraic $C^m_{loc}$ functions on $\R^2$, using Corollary \ref{corollary3.2}.

Next, we show that $j_{\bar{x}}\left( \vec{F},\vec{G}\right) =\left( \vec{F}%
^{\#}\left( \bar{x}\right) ,\vec{G}^{\#}\left( \bar{x}\right) \right) $ for
$\bar{x}\in \left( 0,\delta \right) $.

From \eqref{pml13-0}, we have
\begin{equation*}
j_{\bar{x}}\left( \vec{F},\vec{G}^{\#\#}\right) =\left( \vec{F}^{\#}\left(
\bar{x}\right) ,\vec{G}^{\#}\left( \bar{x}\right) \right) \text{.}
\end{equation*}%

From \eqref{doublestars}, we see that $j_{\bar{x}}\left( G_{i}-G_{i}^{\#\#}\right)
=0$ for all $\bar{x}\in \left( 0,\delta \right) $. Therefore,
\begin{equation*}
j_{\bar{x}}\left( \vec{F},\vec{G}\right) =j_{\bar{x}}\left( \vec{F},\vec{G}%
^{\#\#}\right) =\left( \vec{F}^{\#}\left( \bar{x}\right) ,\vec{G}^{\#}\left(
\bar{x}\right) \right) \text{,}
\end{equation*}%
as desired.

Thus, we have proven \eqref{assertionII}.

The proof of Lemma \ref{main-lemma} is complete. \end{proof}

\subsection{Patching near a cusp}

\label{section-patching-near-a-cusp}

\begin{lemma}
\label{lemma-pnc1} Let $\psi (x)$ be a semialgebraic function on $[0,\delta
] $, satisfying $\psi (0)=0,0<\psi (x)\leq x$ for all $x\in (0,\delta ]$. We
set
\begin{equation*}
E_{\delta }=\{(x,y)\in \mathbb{R}^{2}:0\leq x\leq \delta ,0\leq y\leq \psi
(x)\},
\end{equation*}%
\begin{equation*}
E_{\delta }^{+}=\{(x,y)\in \mathbb{R}^{2}:0\leq x\leq \delta ,\frac{1}{3}%
\psi (x)\leq y\leq \psi (x)\}, \text{ and }
\end{equation*}%
\begin{equation*}
E_{\delta }^{-}=\{(x,y)\in \mathbb{R}^{2}:0\leq x\leq \delta ,0\leq y\leq
\frac{2}{3}\psi (x)\}.
\end{equation*}

Fix a semialgebraic function of one variable, $\theta \left( t\right) $,
satisfying $0\leq \theta \left( t\right) \leq 1$, $\theta \left( t\right) =1$
for $t\leq 1/3,$ $\theta \left( t\right) =0$ for $t\geq 2/3$, $\theta \in
C^{m+100}$.

Then set
\begin{equation*}
\theta _{-}\left( x,y\right) =\theta \left( \frac{y}{\psi \left( x\right) }%
\right) \text{, }\theta _{+}\left( x,y\right) =1-\theta _{-}\left(
x,y\right) \text{ for }\left( x,y\right) \in E_{\delta }\setminus\{(0,0)\}\text{.}
\end{equation*}%
Thus, $\theta _{+},\theta _{-}\geq 0$ and $\theta _{+}+\theta _{-}=1$ on $%
E_{\delta }\setminus\{(0,0)\}$.

Let $F^{+}\in C^m(E_{\delta }^{+})$ and $%
F^{-}\in C^m(E_{\delta }^{-})$ be semialgebraic functions, with $%
J_{(0,0)}F^{+}=J_{(0,0)}F^{-}=0$.

Suppose that
\begin{equation}
\partial _{y}^{l}F^{+}(x,\psi (x))-\sum_{j=0}^{m-l}\frac{1}{j!}\partial
_{y}^{l+j}F^{-}(x,0)\cdot (\psi (x))^{j}=o((\psi (x))^{m-l})
\label{taylorexp}
\end{equation}%
as $x\rightarrow 0^{+}$ for each $l=0,\cdots ,m$.

Define $F=\theta_+\cdot F^+ + \theta_-\cdot F^-$ on $E_\delta\setminus\{(0,0)\}, F(0,0)=0$.

Then $F$ is a $C^m$ semialgebraic function on $E_{\delta^{\prime}}$ for some small
$\delta^{\prime}$. The jet of $F$ at the origin is zero. Moreover, $F=F^+$ in a neighborhood of any point $(x, \psi(x))$, $0<x<\delta^{\prime}$; and $F=F^-$ in a
neighborhood of any point $(x,0), 0<x< \delta^{\prime}$.

\end{lemma}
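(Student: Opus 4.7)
The plan is to verify three things: $F$ is semialgebraic; $F$ is $C^m$ on $E_{\delta'}\setminus\{(0,0)\}$ and locally equals $F^\pm$ near the two edges; and every partial derivative $\partial^\alpha F$ of order $|\alpha|\le m$ extends continuously to $(0,0)$ with value zero. Once this last fact is established, Corollary~\ref{corollary3.2} delivers the $C^m$ extension to $\mathbb{R}^2$. Semialgebraicity is immediate from the construction. The smoothness away from the origin and the local equalities $F=F^\pm$ near the respective edges follow from the supports of the cutoffs: $\theta_+\equiv 0$ in a full $(x,y)$-neighborhood of each point of $\{(x,0):x>0\}$ and $\theta_-\equiv 0$ in a full neighborhood of each point of $\{(x,\psi(x)):x>0\}$, while in between $F$ is manifestly $C^m$ as a $C^{m+100}$-weighted combination of $C^m$ functions.

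The real work is to show $\partial^\alpha F(x,y)\to 0$ as $(x,y)\to(0,0)$ inside $E_{\delta'}$ for $|\alpha|\le m$. Partition $E_{\delta'}\setminus\{(0,0)\}$ into the strips $\{y<\psi/3\}$, $\{y>2\psi/3\}$, and the overlap $\{\psi/3\le y\le 2\psi/3\}$. On the first two strips, $F$ equals $F^-$ or $F^+$, and $\partial^\alpha F\to 0$ by the hypothesis $J_{(0,0)}F^\pm=0$. On the overlap, I write $F=F^-+\theta_+(F^+-F^-)$; the first summand already decays, so only the second requires attention.

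Two ingredients will handle the overlap. First (cutoff bounds): since $y\asymp\psi(x)$ there and $|\psi^{(k)}(x)|\le C\,\psi(x)/x^k$ by Corollary~\ref{corollarytolemma_onevariablegrowth}, the chain rule gives $|\partial^\gamma\theta_\pm(x,y)|\le C\,\psi(x)^{-|\gamma|}$ for $|\gamma|\le m+100$. Second (the key estimate): for $|\beta|\le m$,
\[
\partial^\beta(F^+-F^-)(x,y)=o\bigl(\psi(x)^{m-|\beta|}\bigr)
\]
as $(x,y)\to(0,0)$ in the overlap. Combined via Leibniz, each term in $\partial^\alpha[\theta_+(F^+-F^-)]$ is bounded by $C\,\psi^{-|\alpha-\beta|}\cdot o(\psi^{m-|\beta|})=o(\psi^{m-|\alpha|})=o(1)$ for $|\alpha|\le m$, yielding $\partial^\alpha F\to 0$ throughout $E_{\delta'}$.

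To prove the key estimate, introduce the $y$-Taylor polynomial $T^-(x,y)=\sum_{l=0}^m \tfrac{y^l}{l!}\partial_y^l F^-(x,0)$. Lagrange's remainder applied in $y$ for fixed $x$, combined with the uniform fact $\partial_y^m F^-(x,y)\to 0$ at the origin (from $J_{(0,0)}F^-=0$), gives $\partial_y^l(F^--T^-)(x,y)=o(y^{m-l})$ uniformly on $E_\delta^-$. Adding this to the hypothesis \eqref{taylorexp} yields $\partial_y^l(F^+-F^-)(x,\psi(x))=o(\psi^{m-l})$ for $0\le l\le m$. Next I upgrade to mixed derivatives along the curve $y=\psi(x)$ by induction on $|\beta|$ (and within each level on $\beta_x$): if $H(x):=\partial^{\beta'}(F^+-F^-)(x,\psi(x))$ is semialgebraic with $H(x)=o(\psi^{m-|\beta'|})$, then Corollary~\ref{corollarytolemma_onevariablegrowth} yields $H'(x)=O(|H(x)|/x)=o(\psi^{m-|\beta'|}/x)\subset o(\psi^{m-|\beta'|-1})$ since $\psi\le x$, and the chain rule $H'(x)=\partial_x\partial^{\beta'}(F^+-F^-)(x,\psi)+\partial_y\partial^{\beta'}(F^+-F^-)(x,\psi)\cdot\psi'(x)$ recovers $\partial^\beta(F^+-F^-)(x,\psi(x))=o(\psi^{m-|\beta|})$ after handling the companion term (of the same total order, reached by stepping through $\beta_x$ at the current level). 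Finally a Taylor expansion in $y$ of $\partial^\beta(F^+-F^-)$ about $y=\psi(x)$, with integral remainder controlled by $\partial_y^m(F^+-F^-)\to 0$ at the origin, transports the estimate from the curve into the full overlap strip and establishes the key estimate. The main obstacle is exactly this upgrade: naive $x$-weighted decay of the form $o(x^{m-|\beta|})$ is too weak to beat the $\psi^{-|\alpha|}$ blowup of cutoff derivatives, and one must extract the finer $\psi$-weighted decay from the Taylor-matching hypothesis via the semialgebraic growth corollary.
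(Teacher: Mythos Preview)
Your overall architecture matches the paper's: reduce to showing $\partial^\alpha F\to 0$ at the origin, split into three strips, on the overlap write $F=F^-+\theta_+(F^+-F^-)$, and beat the cutoff blowup $|\partial^\gamma\theta_\pm|\le C\psi^{-|\gamma|}$ with the key estimate $\partial^\beta(F^+-F^-)=o(\psi^{m-|\beta|})$. The cutoff bounds and the Leibniz combination are fine, and your use of Corollary~\ref{corollarytolemma_onevariablegrowth} to push the hypothesis through $x$-derivatives is the same mechanism the paper uses.

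The gap is a domain issue. You repeatedly evaluate $F^-$ at $(x,\psi(x))$: in the line ``$\partial_y^l(F^+-F^-)(x,\psi(x))=o(\psi^{m-l})$'', in the induction with $H(x)=\partial^{\beta'}(F^+-F^-)(x,\psi(x))$, and in the final Taylor expansion of $F^+-F^-$ about $y=\psi(x)$. But $F^-\in C^m(E_\delta^-)$ with $E_\delta^-=\{0\le y\le\tfrac{2}{3}\psi(x)\}$, so $F^-(x,\psi(x))$ is not defined. Your correct estimate $\partial_y^l(F^--T^-)=o(y^{m-l})$ lives on $E_\delta^-$ and cannot be ``added'' to the hypothesis at $y=\psi(x)$; nor can you Taylor-expand $F^-$ about $y=\psi(x)$.

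The paper avoids this by never evaluating $F^-$ off its domain. It proves by induction on $\mu$ (via exactly your differentiate-a-semialgebraic-function-of-one-variable step) that
\[
\partial_x^\mu\partial_y^l F^+(x,\psi(x))-\sum_{j=0}^{m-\mu-l}\tfrac{1}{j!}\,\partial_x^\mu\partial_y^{l+j}F^-(x,0)\,\psi(x)^j \;=\; o\bigl(\psi(x)^{m-\mu-l}\bigr),
\]
i.e., $F^+$ on the upper edge is compared to the \emph{Taylor polynomial} of $F^-$ at the lower edge (a polynomial in $y$, hence evaluable at $\psi(x)$). Then on the overlap it splits $\partial_x^\mu\partial_y^l(F^+-F^-)$ as $[\partial_x^\mu F^+-P_x^+]-[\partial_x^\mu F^--P_x^-]+[P_x^+-P_x^-]$, where $P_x^+$ and $P_x^-$ are the $y$-Taylor polynomials of $\partial_x^\mu F^+$ about $\psi(x)$ and of $\partial_x^\mu F^-$ about $0$; the first two brackets are Taylor remainders controlled by the vanishing of the top-order $y$-derivatives at the origin, and the polynomial comparison $P_x^+-P_x^-$ is handled by the displayed estimate. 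Your $T^-$ is essentially $P_x^-$ for $\mu=0$; running your induction on $H(x)=\partial^{\beta'}(F^+-T^-)(x,\psi(x))$ instead of on $F^+-F^-$, and then handling $F^--T^-$ separately on $E_\delta^-$, would close the gap.
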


\begin{proof}
Because $0\leq \psi (x)\leq x$ and $\psi $ is given near $0$ by a convergent
Puiseux series, we have $\psi ^{(k)}(x)=O(x^{1-k})$ as $x\rightarrow 0^{+}$,
for $k=0,\cdots ,m+100$. Also, because $F^{+},F^{-}$ have zero jet at $(0,0)$%
, we have, for $|\alpha |=m$, $\partial ^{\alpha }F^{+}(x,y)=o(1)$ as $%
(x,y)\in E_{\delta }^{+}$ tends to zero and $\partial ^{\alpha
}F^{-}(x,y)=o(1)$ as $(x,y)\in E_{\delta }^{-}$ tends to zero.

By induction on $\mu$, we now prove that

\begin{itemize}
\item[\refstepcounter{equation}\text{(\theequation)}\label{pnc-1}] $\partial
_{x}^{\mu }\partial _{y}^{l}F^{+}(x,\psi (x))-\sum_{j=0}^{m-l-\mu }\frac{1}{%
j!}\partial _{x}^{\mu }\partial _{y}^{l+j}F^{-}(x,0)\cdot (\psi
(x))^{j}=o((\psi (x))^{m-\mu -l})$ as $x\rightarrow 0^{+}$ for $\mu +l\leq m$%
.
\end{itemize}

For $\mu =0$, \eqref{pnc-1} is a hypothesis of our lemma. Assuming %
\eqref{pnc-1} for $\mu $, we prove it for $\mu +1$. Thus, fix $l$ satisfying
$(\mu +1)+l\leq m$. Recalling that $\partial
_{x}^{\mu }\partial _{y}^{l+j}F^{-}(x,0)=o(1)$ when $\mu +(l+j)=m$, we
conclude from \eqref{pnc-1} that

\begin{itemize}
\item[\refstepcounter{equation}\text{(\theequation)}\label{pnc-1-lhs}] $%
\partial _{x}^{\mu }\partial _{y}^{l}F^{+}(x,\psi (x))-\sum_{j=0}^{m-l-\mu
-1}\frac{1}{j!}\partial _{x}^{\mu }\partial _{y}^{l+j}F^{-}(x,0)\cdot (\psi
(x))^{j}=o((\psi (x))^{m-\mu -l})$ as $x\rightarrow 0^{+}$.
\end{itemize}

Because the above functions are semialgebraic functions of one variable and
thus given near $0$ by convergent Puiseux series, it follows that $\frac{d}{%
dx}\{\eqref{pnc-1-lhs}\}=o((\psi (x))^{m-\mu -l}\cdot x^{-1})$, hence $\frac{%
d}{dx}\{\eqref{pnc-1-lhs}\}=o((\psi (x))^{m-\mu -l-1})$, because $0<\psi
(x)\leq x$. Thus,
\begin{eqnarray*}
&&\left[ \left( \partial _{x}+\psi ^{\prime }\left( x\right) \partial
_{y}\right) \left( \partial _{x}^{\mu }\partial _{y}^{l}F^{+}\right) \right]
\left( x,\psi \left( x\right) \right) -\sum_{j=0}^{m-l-\mu -1}\frac{1}{j!}%
\partial _{x}^{\mu +1}\partial _{y}^{l+j}F^{-}\left( x,0\right) \left( \psi
\left( x\right) \right) ^{j} \\
&&-\sum_{j=1}^{m-l-\mu -1}\frac{1}{j!}\partial _{x}^{\mu }\partial
_{y}^{l+j}F^{-}\left( x,0\right) j\left( \psi \left( x\right) \right)
^{j-1}\psi ^{\prime }\left( x\right) \\
&=&o\left( \left( \psi \left( x\right) \right) ^{m-\mu -l-1}\right) \text{.}
\end{eqnarray*}%
It follows that
\begin{eqnarray}
&&\left[ \partial _{x}^{\mu +1}\partial _{y}^{l}F^{+}\left( x,\psi \left(
x\right) \right) -\sum_{j=0}^{m-l-\left( \mu +1\right) }\frac{1}{j!}\partial
_{x}^{\mu +1}\partial _{y}^{l+j}F^{-}\left( x,0\right) \left( \psi \left(
x\right) \right) ^{j}\right]  \notag \\
&&+\psi ^{\prime }\left( x\right) \left[ \partial _{x}^{\mu }\partial
_{y}^{l+1}F^{+}\left( x,\psi \left( x\right) \right) -\sum_{j=0}^{m-l-\mu -2}%
\frac{1}{j!}\partial _{x}^{\mu }\partial _{y}^{l+1+j}F^{-}\left( x,0\right)
\left( \psi \left( x\right) \right) ^{j}\right]  \label{pncbracket} \\
&=&o\left( \left( \psi \left( x\right) \right) ^{m-\left( \mu +1\right)
-l}\right) \text{.}  \notag
\end{eqnarray}%
For $j=m-l-\mu -1$, we have $\partial _{x}^{\mu }\partial
_{y}^{l+1+j}F^{-}\left( x,0\right) =o\left( 1\right) $, hence inductive
hypothesis \eqref{pnc-1} for $\left( l+1\right) $ in place of $l$ tells us
that the second term in square brackets in \eqref{pncbracket} is $o\left(
\left( \psi \left( x\right) \right) ^{m-\left( \mu +1\right) -l}\right) $.
Also, $\left\vert \psi ^{\prime }\left( x\right) \right\vert =O\left(
1\right) $.

Consequently, the first term in square brackets in \eqref{pncbracket} is $%
o\left( \left( \psi \left( x\right) \right) ^{m-\left( \mu +1\right)
-l}\right) $, proving the analogue of \eqref{pnc-1} for $\mu +1,$ thus
completing the induction and establishing \eqref{pnc-1}$.$

We bring in the cutoff functions $\theta_+$ and $\theta_-$. Note that $\theta _{+}$ is supported in $E_{\delta }^{+}$ and $\theta _{-}$ is
supported in $E_{\delta }^{-}$.

We will estimate the derivatives of $\theta _{+}$, $\theta _{-}$ on $%
E_{\delta }$.

We have
\begin{equation*}
\left( \frac{d}{dx}\right) ^{k}\frac{1}{\psi \left( x\right) }=O\left( \frac{%
1}{\psi \left( x\right) }x^{-k}\right) \text{ as }x\rightarrow 0^{+},
\end{equation*}%
because $\psi $ is given by a convergent Puiseux series.

Because $0<\psi \left( x\right) \leq x$ for $x\in \left( 0,\delta \right) $
and $0\leq y\leq \psi \left( x\right) $ in $E_{\delta }$, it follows that $$%
\partial _{x}^{l}\partial _{y}^{k}\left( \frac{y}{\psi \left( x\right) }%
\right) =O\left( \left( \psi \left( x\right) \right) ^{-k-l}\right) $$ as $%
\left( x,y\right) \in E_{\delta }\rightarrow 0$, for all $k,l\geq 0$.

Now, $\partial _{x,y}^{\alpha }\theta _{-}\left( x,y\right) $ is a sum of
terms $\theta ^{\left( s\right) }\left( \frac{y}{\psi \left( x\right) }%
\right) \cdot \prod_{\sigma =1}^{s}\left[ \partial _{x,y}^{\alpha _{\sigma
}}\left( \frac{y}{\psi \left( x\right) }\right) \right] $ with $\alpha
_{1}+\cdots +\alpha _{s}=\alpha $, $s\leq \left\vert \alpha \right\vert $.

Each such term is $O\left( \prod_{\sigma =1}^{s}\left( \frac{1}{\psi \left(
x\right) }\right) ^{\left\vert \alpha _{\sigma }\right\vert }\right)
=O\left( \left( \frac{1}{\psi \left( x\right) }\right) ^{\left\vert \alpha
\right\vert }\right) $.

Thus,
\begin{equation}
\left\vert \partial _{x,y}^{\alpha }\theta _{-}\left( x,y\right) \right\vert
,\left\vert \partial _{x,y}^{\alpha }\theta _{+}\left( x,y\right)
\right\vert \leq \frac{C_{\alpha }}{\left( \psi \left( x\right) \right)
^{\left\vert \alpha \right\vert }}\text{ on }E_{\delta }\text{ (smaller }%
\delta \text{) for }\left\vert \alpha \right\vert \leq m+100\text{.}
\label{pnc2}
\end{equation}%
Next, we return to $F^{+},F^{-}$, and prove the following estimate
\begin{equation}
\partial _{x}^{\mu }\partial _{y}^{l}\left( F^{+}-F^{-}\right) \left(
x,y\right) =o\left( \left[ \psi \left( x\right) \right] ^{m-\mu -l}\right)
\text{ as }\left( x,y\right) \in E_{\delta }^{+}\cap E_{\delta
}^{-}\rightarrow 0  \label{pnc3}
\end{equation}%
for each $\mu ,l$ with $\mu +l\leq m$.

To see this, fix $\mu $, $0\leq \mu \leq m$, and look at the polynomials
\begin{eqnarray*}
P_{x}^{+}\left( y\right) &=&\sum_{j=0}^{m-\mu }\frac{1}{j!}\left[ \partial
_{y}^{j}\partial _{x}^{\mu }F^{+}\left( x,\psi \left( x\right) \right) %
\right] \cdot \left( y-\psi \left( x\right) \right) ^{j}\text{,} \\
P_{x}^{-}\left( y\right) &=&\sum_{j=0}^{m-\mu }\frac{1}{j!}\left[ \partial
_{y}^{j}\partial _{x}^{\mu }F^{-}\left( x,0\right) %
\right] \cdot y^{j}\text{.}
\end{eqnarray*}%
Estimate \eqref{pnc-1} shows that
\begin{equation}
\partial _{y}^{l}\left( P_{x}^{+}-P_{x}^{-}\right) |_{y=\psi \left( x\right)
}=o\left( \left( \psi \left( x\right) \right) ^{m-\mu -l}\right) \text{ for }%
l=0,\cdots ,m-\mu \text{.}  \label{pncstar}
\end{equation}%
For $y$ satisfying $\left( x,y\right) \in E_{\delta }^{+}\cap E_{\delta
}^{-} $, we have $\left\vert y\right\vert ,\left\vert y-\psi \left( x\right)
\right\vert \leq \psi \left( x\right) $ and therefore \eqref{pncstar} yields
\begin{equation*}
\partial _{y}^{l}\left( P_{x}^{+}-P_{x}^{-}\right) \left( x,y\right)
=o\left( \left( \psi \left( x\right) \right) ^{m-\mu -l}\right)
\end{equation*}%
as $\left( x,y\right) \in E_{\delta }^{+}\cap E_{\delta }^{-}$ tends to zero.

On the other hand, Taylor's theorem gives for $\left( x,y\right) \in
E_{\delta }^{+}\cap E_{\delta }^{-}\setminus\{(0,0)\}$ the estimates
\begin{equation*}
\partial _{y}^{l}\left[ \partial _{x}^{\mu }F^{+}-P_{x}^{+}\right] \left(
x,y\right) =O\left( \left( \psi \left(
x\right) \right) ^{m-\mu -l}\cdot\max_{\bar{y}\in \left[ \frac{1}{3}\psi \left( x\right)
,\psi \left( x\right) \right] }\left\vert \partial _{y}^{m-\mu }\partial
_{x}^{\mu }F^{+}\left( x,\bar{y}\right) \right\vert \right)
\end{equation*}%
and
\begin{equation*}
\partial _{y}^{l}\left[ \partial _{x}^{\mu }F^{-}-P_{x}^{-}\right] \left(
x,y\right) =O\left(\left( \psi \left( x\right)
\right) ^{m-\mu -l}\cdot \max_{\bar{y}\in \left[ 0,\frac{2}{3}\psi \left(
x\right) \right] }\left\vert \partial _{y}^{m-\mu }\partial _{x}^{\mu
}F^{-}\left( x,\bar{y}\right) \right\vert  \right) \text{.}
\end{equation*}%
The maxima in these last two estimates are $o\left( 1\right) $, because $%
J_{\left( 0,0\right) }F^{+}=J_{\left( 0,0\right) }F^{-}=0$.

Thus, as $\left( x,y\right) \in E_{\delta }^{+}\cap E_{\delta }^{-}\setminus\{(0,0)\}$
approaches zero, the quantities $\partial _{y}^{l}\left[ \partial _{x}^{\mu
}F^{+}-P_{x}^{+}\right] \left( x,y\right) $, \newline
$\partial _{y}^{l}\left[ \partial _{x}^{\mu }F^{-}-P_{x}^{-}\right] \left(
x,y\right)$, $\partial _{y}^{l}\left[ P_{x}^{+}-P_{x}^{-}\right] \left(
x,y\right) $ are all $o\left( \left( \psi \left( x\right) \right) ^{m-\mu
-l}\right) $.

Consequently, $\left( \partial _{y}^{l}\partial _{x}^{\mu }F^{+}-\partial
_{y}^{l}\partial _{x}^{\mu }F^{-}\right) \left( x,y\right) =o\left( \left(
\psi \left( x\right) \right) ^{m-\mu -l}\right) $ as $\left( x,y\right) \in
E_{\delta }^{+}\cap E_{\delta }^{-}\setminus\{(0,0)\}$ approaches zero, completing the proof
of \eqref{pnc3}.

We now set $F=\theta _{+}F^{+}+\theta _{-}F^{-}$ on $E_{\delta }\setminus\{(0,0)\}$ and $F(0,0)=0$.

Evidently, $F$ is $C^m$ away from the origin, and semialgebraic; moreover, $%
F=F^{+}$ in a neighborhood of any point $\left( x^{0},\psi \left(
x^{0}\right) \right) $ in $E_{\delta }$ $\left( x^{0}\not=0\right) $ and $%
F=F^{-}$ in a neighborhood of any point $\left( x^{0},0\right) \in E_{\delta
}$ $\left( x^{0}\not=0\right) $.

It remains to check that $F\in C^m\left( E_{\delta }\right) $ near $0$ and
that $J_{\left( 0,0\right) }F=0$. That amounts to showing that
\begin{equation}
\partial _{x,y}^{\alpha }F\left( x,y\right) =o\left( x^{m-\left\vert \alpha
\right\vert }\right) \text{ as }\left( x,y\right) \in E_{\delta }\setminus\{(0,0)\}\text{
approaches } (0,0) \text{ (all }\left\vert \alpha \right\vert \leq m\text{).}
\label{pnc4}
\end{equation}

To prove \eqref{pnc4}, we may assume $\left( x,y\right) \in E_{\delta
}^{+}\cap E_{\delta }^{-}\setminus\{(0,0)\}$, because otherwise the left-hand side of \eqref{pnc4} is $%
\partial _{x,y}^{\alpha }F^{+}$ for $\left( x,y\right) \in E_{\delta }^{+}\setminus\{(0,0)\}$
or else $\partial _{x,y}^{\alpha }F^{-}$ for $\left( x,y\right) \in
E_{\delta }^{-}\setminus\{(0,0)\}$, in which case \eqref{pnc4} holds because $J_{\left(
0,0\right) }F^{+}=J_{\left( 0,0\right) }F^{-}=0$.

For $\left( x,y\right) \in E_{\delta }^{+}\cap E_{\delta }^{-}\setminus\{(0,0)\}$, we have
\begin{equation}
F=F^{-}+\theta _{+}\left( F^{+}-F^{-}\right) \text{.}  \label{pnc5}
\end{equation}%
Because $J_{\left( 0,0\right) }F^{-}=0$, we have
\begin{equation}
\partial _{x,y}^{\alpha }F^{-}\left( x,y\right) =o\left( x^{m-\left\vert
\alpha \right\vert }\right) \text{ as }\left( x,y\right) \in E_{\delta
}^{+}\cap E_{\delta }^{-}\setminus\{(0,0)\}\text{ tends to }(0,0)\text{, for }\left\vert \alpha
\right\vert \leq m\text{.}  \label{pnc6}
\end{equation}%
We recall that $\partial _{x,y}^{\alpha }\theta _{+}\left( x,y\right)
=O\left( \left( \psi \left( x\right) \right) ^{-\left\vert \alpha
\right\vert }\right) $ for $\left\vert \alpha \right\vert \leq m$ and that $%
\partial _{x,y}^{\alpha }\left( F^{+}-F^{-}\right) \left( x,y\right)
=o\left( \left( \psi \left( x\right) \right) ^{m-\left\vert \alpha
\right\vert }\right) $ for $\left\vert \alpha \right\vert \leq m$ as $\left(
x,y\right) \in E_{\delta }^{+}\cap E_{\delta }^{-}\setminus\{(0,0)\}$ tends to $(0,0)$, for $%
\left\vert \alpha \right\vert \leq m$.

Therefore, for $\left\vert \alpha \right\vert \leq m$, as $\left( x,y\right)
\in E_{\delta }^{+}\cap E_{\delta }^{-}\setminus\{(0,0)\}$ tends to $(0,0)$, we have
\begin{equation*}
\partial _{x,y}^{\alpha }\left\{ \theta _{+}\left( F^{+}-F^{-}\right) \left(
x,y\right) \right\} =o\left( \left( \psi \left( x\right) \right)
^{m-\left\vert \alpha \right\vert }\right) \text{,}
\end{equation*}%
hence
\begin{equation}
\partial _{x,y}^{\alpha }\left\{ \theta _{+}\left( F^{+}-F^{-}\right) \left(
x,y\right) \right\} =o\left( x^{m-\left\vert \alpha \right\vert }\right)
\text{,}  \label{pnc7}
\end{equation}%
because $0<\psi \left( x\right) \leq x$. Putting \eqref{pnc6}, \eqref{pnc7}
into \eqref{pnc5}, we see that
\begin{equation*}
\partial _{x,y}^{\alpha }F\left( x,y\right) =o\left( x^{m-\left\vert \alpha
\right\vert }\right)
\end{equation*}%
as $\left( x,y\right) \in E_{\delta }^{+}\cap E_{\delta }^{-}\setminus\{(0,0)\}$ tends to $(0,0)$,
for $\left\vert \alpha \right\vert \leq m$.

Thus, \eqref{pnc4} holds. The proof of Lemma \ref{lemma-pnc1} is complete.
\end{proof}

Next, we introduce a change of variables in a neighborhood of $0$ in $%
\mathbb{R}_{+}^{2}=\left\{ \left( x,y\right) :x> 0\right\} $ of the form
\begin{equation}
\bar{x}=x,\bar{y}=y+\tilde{\psi}\left( x\right) \text{,}  \label{pnc+}
\end{equation}%
where $\tilde{\psi}\left( x\right) $ is semialgebraic and satisfies $%
\left\vert \tilde{\psi}\left( x\right) \right\vert \leq Cx$ for $x\in \left(
0,\delta \right) $.

The inverse change of variables is of course
\begin{equation*}
x=\bar{x},y=\bar{y}-\tilde{\psi}\left( \bar{x}\right) \text{.}
\end{equation*}%
Note that $\partial _{x,y}^{\alpha }\left( \bar{x},\bar{y}\right) =O\left(
x^{1-\left\vert \alpha \right\vert }\right) $ for $\left\vert y\right\vert
\leq Cx\ll 1$ because $\tilde{\psi}$ is given near $0$ as \ a convergent Puiseux
series, hence $\left\vert \tilde{\psi}\left( x\right) \right\vert \leq Cx$
implies $\left\vert \tilde{\psi}^{\left( k\right) }\right\vert \leq
C_{k}x^{1-k}$ for small $x$.

The change of variables \eqref{pnc+} does not preserve $C^m$, but it does
preserve $C^m$ functions whose jets at $0$ are equal to zero.

Indeed, suppose $F\left( \bar{x},\bar{y}\right) \in C^m \left( \bar{E}\right) $ for
$\bar{E}\subset \left\{ \left( \bar{x},\bar{y}\right) :\left\vert \bar{y}%
\right\vert \leq C\bar{x}\right\} $, with $0\in \bar{E}$ and $J_{0}F=0$.

Then $\bar{E}$ corresponds under \eqref{pnc+} to a set $E\subset \left\{
\left( x,y\right) :\left\vert y\right\vert \leq C^{\prime }x\right\} $, $%
0\in E$.

We may regard $F$ as a function of $\left( x,y\right) $, and for $\left\vert
\alpha \right\vert \leq m$, $\partial _{x,y}^{\alpha }F\left( x,y\right) $
is a sum of terms $\left\vert \partial _{\bar{x},\bar{y}}^{\beta }F\left(
\bar{x},\bar{y}\right) \right\vert \cdot \prod_{\nu =1}^{\left\vert \beta
\right\vert }\left[ \partial _{x,y}^{\alpha _{\nu }}\left( \bar{x},\bar{y}%
\right) \right] $ with $|\beta|\leq m$ and $\sum_{\nu }\alpha _{\nu }=\alpha $. If $J_{\left(
0,0\right) }F=0$ as a function of $\left( \bar{x},\bar{y}\right) $, then $%
\partial _{\bar{x},\bar{y}}^{\beta }F\left( \bar{x},\bar{y}\right) =o\left(
\bar{x}^{m-\left\vert \beta \right\vert }\right) $ on $\bar{E}$, hence $%
\partial _{\bar{x},\bar{y}}^{\beta }F\left( \bar{x},\bar{y}\right) =o\left(
x^{m-\left\vert \beta \right\vert }\right) $ on $E$. Also, on $E,$
\begin{equation*}
\prod_{\nu =1}^{\left\vert \beta \right\vert }\left[ \partial _{x,y}^{\alpha
_{\nu }}\left( \bar{x},\bar{y}\right) \right] =\prod_{\nu =1}^{\left\vert
\beta \right\vert }O\left( x^{1-\left\vert \alpha _{\nu }\right\vert
}\right) =O\left( x^{\left\vert \beta \right\vert -\sum_{\nu }\left\vert
\alpha _{\nu }\right\vert }\right) =O\left( x^{\left\vert\beta\right\vert -\left\vert \alpha
\right\vert }\right) \text{.}
\end{equation*}%
Consequently, $\partial _{x,y}^{\alpha }F\left( x,y\right) =o\left(
x^{m-\left\vert \alpha \right\vert }\right) $ on $E\setminus\{(0,0)\}$, for $\left\vert \alpha
\right\vert \leq m$. Thus, as claimed, $F\in C^m\left( E\right) $ and $%
J_{\left( 0,0\right) }F=0$.

The following generalization of Lemma \ref{lemma-pnc1} is reduced to Lemma %
\ref{lemma-pnc1} by means of the change of variables discussed above.

\begin{lemma}
\label{lemma-pnc-2}Let $0\leq \psi _{-}(x)\leq \psi _{+}\left( x\right) \leq
x$ be semialgebraic functions on $[0,\delta ]$, with $\psi_- < \psi_+$ on $(0, \delta]$. We set
\begin{equation*}
E_{\delta }=\{(x,y)\in \mathbb{R}^{2}:0\leq x\leq \delta ,\psi _{-}\left(
x\right) \leq y\leq \psi _{+}(x)\},
\end{equation*}%
\begin{equation*}
E_{\delta }^{+}=\{(x,y)\in \mathbb{R}^{2}:0\leq x\leq \delta ,0\leq \psi
_{+}(x)-y\leq \frac{2}{3}\left( \psi _{+}(x)-\psi _{-}\left( x\right)
\right) \},\text{ and}
\end{equation*}%
\begin{equation*}
E_{\delta }^{-}=\{(x,y)\in \mathbb{R}^{2}:0\leq x\leq \delta ,0\leq y-\psi
_{-}\left( x\right) \leq \frac{2}{3}\left( \psi _{+}(x)-\psi _{-}\left(
x\right) \right) \}.
\end{equation*}

Fix a semialgebraic function of one variable, $\theta \left( t\right) $,
satisfying $0\leq \theta \left( t\right) \leq 1$, $\theta \left( t\right) =1$
for $t\leq 1/3,$ $\theta \left( t\right) =0$ for $t\geq 2/3$, $\theta \in
C^{m+100}$.

Then set
\begin{equation*}
\theta _{-}\left( x,y\right) =\theta \left( \frac{y-\psi_-(x)}{(\psi_+-\psi_-) \left( x\right) }%
\right) \text{, }\theta _{+}\left( x,y\right) =1-\theta _{-}\left(
x,y\right) \text{ for }\left( x,y\right) \in E_{\delta }\setminus\{(0,0)\}\text{.}
\end{equation*}%
Thus, $\theta _{+},\theta _{-}\geq 0$ and $\theta _{+}+\theta _{-}=1$ on $%
E_{\delta }\setminus\{(0,0)\}$.

Let $F^{+}\in C^m(E_{\delta }^{+})$ and $%
F^{-}\in C^m(E_{\delta }^{-})$ be semialgebraic functions, with $%
J_{(0,0)}F^{+}=J_{(0,0)}F^{-}=0$.

Suppose that
\begin{equation*}
\partial _{y}^{l}F^{+}(x,\psi_+ (x))-\sum_{j=0}^{m-l}\frac{1}{j!}\partial
_{y}^{l+j}F^{-}(x,\psi_-(x))\cdot (\psi _{+}(x)-\psi _{-}\left( x\right)
)^{j}=o((\psi _{+}(x)-\psi _{-}\left( x\right) )^{m-l})
\end{equation*}%
as $x\rightarrow 0^{+}$ for each $l=0,\cdots ,m$.

Define $F=\theta_+\cdot F^+ + \theta_-\cdot F^-$ on $E_\delta\setminus\{(0,0)\}, F(0,0)=0$.

Then $F$ is a $C^m$ semialgebraic function on $E_{\delta^{\prime} }$ for some small $\delta^{\prime} $. The jet of $F$ at $(0,0)$ is zero. Moreover, $F=F^{+}$ in a neighborhood of any point $(x,\psi _{+}(x))$, $0<x<\delta^{\prime}$, and $F=F^{-}$ in a neighborhood of any point $(x,\psi
_{-}(x))$, $0<x<\delta^{\prime} $.
\end{lemma}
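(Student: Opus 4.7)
The plan is to reduce Lemma \ref{lemma-pnc-2} to Lemma \ref{lemma-pnc1} via the semialgebraic change of variables
\begin{equation*}
\bar{x}=x,\qquad \bar{y}=y-\psi_{-}(x),
\end{equation*}
which is of the form \eqref{pnc+} with $\tilde{\psi}=-\psi_{-}$, and satisfies $|\tilde{\psi}(x)|\leq x$ because $0\leq \psi_{-}(x)\leq x$. Under this change of variables, the region $E_{\delta}$ becomes $\bar{E}_{\delta}=\{(\bar{x},\bar{y}):0\leq \bar{x}\leq \delta,\, 0\leq \bar{y}\leq \tilde{\Psi}(\bar{x})\}$ with $\tilde{\Psi}(\bar{x})=\psi_{+}(\bar{x})-\psi_{-}(\bar{x})$, the lower curve $y=\psi_{-}(x)$ becomes $\bar{y}=0$, and the upper curve $y=\psi_{+}(x)$ becomes $\bar{y}=\tilde{\Psi}(\bar{x})$. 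A direct check shows that $E_{\delta}^{\pm}$ transform into the sets $\bar{E}_{\delta}^{\pm}$ appearing in Lemma \ref{lemma-pnc1} (with $\tilde{\Psi}$ in place of $\psi$), and the cutoff $\theta_{-}(x,y)=\theta((y-\psi_{-}(x))/\tilde{\Psi}(x))$ becomes $\theta(\bar{y}/\tilde{\Psi}(\bar{x}))$, again matching Lemma \ref{lemma-pnc1}.

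First, define $\bar{F}^{\pm}(\bar{x},\bar{y})=F^{\pm}(\bar{x},\bar{y}+\psi_{-}(\bar{x}))$ on $\bar{E}_{\delta}^{\pm}$. Since the change of variables preserves $C^m$ functions with zero jet at the origin (as shown in the paragraph preceding the statement of the lemma), we have $\bar{F}^{\pm}\in C^m(\bar{E}_{\delta}^{\pm})$ with $J_{(0,0)}\bar{F}^{\pm}=0$, and the $\bar{F}^{\pm}$ are semialgebraic because the change of variables is. Since $\partial/\partial y=\partial/\partial \bar{y}$ at fixed $\bar{x}$, we have $\partial_{\bar{y}}^{l}\bar{F}^{+}(\bar{x},\tilde{\Psi}(\bar{x}))=\partial_{y}^{l}F^{+}(x,\psi_{+}(x))$ and $\partial_{\bar{y}}^{l+j}\bar{F}^{-}(\bar{x},0)=\partial_{y}^{l+j}F^{-}(x,\psi_{-}(x))$. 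Substituting these into the hypothesis of Lemma \ref{lemma-pnc-2}, we obtain exactly the Taylor-matching condition \eqref{taylorexp} required by Lemma \ref{lemma-pnc1}, with $\tilde{\Psi}$ in place of $\psi$.

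Next, apply Lemma \ref{lemma-pnc1} to $\bar{F}^{+},\bar{F}^{-}$ on $\bar{E}_{\delta}$ to produce a semialgebraic function $\bar{F}\in C^m(\bar{E}_{\delta'})$ for some $\delta'>0$, with $J_{(0,0)}\bar{F}=0$, such that $\bar{F}=\bar{F}^{+}$ near each $(\bar{x},\tilde{\Psi}(\bar{x}))$ and $\bar{F}=\bar{F}^{-}$ near each $(\bar{x},0)$ for $0<\bar{x}<\delta'$. Pulling back, set $F(x,y)=\bar{F}(x,y-\psi_{-}(x))$ on $E_{\delta'}$ with $F(0,0)=0$. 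The inverse change of variables $\bar{x}=x,\bar{y}=y-\psi_{-}(x)$ is again of the form \eqref{pnc+}, so the same preservation result yields $F\in C^m(E_{\delta'})$ with $J_{(0,0)}F=0$; and $F$ is semialgebraic. Unwinding the definitions, $\theta_{-}(x,y)=\theta(\bar{y}/\tilde{\Psi}(\bar{x}))$ pulls back the cutoff from Lemma \ref{lemma-pnc1}, and so $F=\theta_{+}F^{+}+\theta_{-}F^{-}$ on $E_{\delta'}\setminus\{(0,0)\}$, with $F=F^{+}$ near every $(x,\psi_{+}(x))$ and $F=F^{-}$ near every $(x,\psi_{-}(x))$ for $0<x<\delta'$.

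The only point that requires any care is the verification that the preservation lemma for the change of variables \eqref{pnc+}, which is proved in the paragraphs immediately preceding Lemma \ref{lemma-pnc-2} for $\tilde\psi$ with $|\tilde\psi(x)|\leq Cx$, applies in both directions here; this is immediate since $\psi_-$ and $-\psi_-$ are both semialgebraic of size $O(x)$, with convergent Puiseux expansions near $0$. No other obstacle arises: all conditions transform into those of Lemma \ref{lemma-pnc1} by direct substitution.
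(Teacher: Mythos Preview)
Your proof is correct and follows exactly the approach indicated in the paper: the paper states that Lemma \ref{lemma-pnc-2} ``is reduced to Lemma \ref{lemma-pnc1} by means of the change of variables discussed above,'' and you carry out precisely this reduction via $\bar{x}=x$, $\bar{y}=y-\psi_{-}(x)$, verifying that all the relevant data (the regions $E_\delta^{\pm}$, the cutoffs $\theta_\pm$, the Taylor-matching hypothesis, and the $C^m$-with-zero-jet property) transform correctly.
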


\subsection{Proof of Lemma \ref{lemma-pit}\label{section-putting-it-together}}

Let $\mathcal{H}=(H(z))_{z\in \mathbb{R}^{2}}$ be a semialgebraic bundle
with a $C^m_{loc}$ section. Each $H(z)$ is a coset of an $\mathcal{R}_{z}$
submodule in $\mathcal{R}_{z}^{D}$. Assume $H((0,0))=\{0\}$. Let $\Omega
_{\delta }=\{(x,y)\in \mathbb{R}^{2}:0\leq x\leq \delta ,0\leq y\leq x\}$
for $\delta >0$. We look for semialgebraic $C^m_{loc}$ sections of $\mathcal{H}|_{\Omega _{\delta }}$, for some small $\delta $ (which will keep
shrinking as we discuss further).

We apply Lemma \ref{WIRM-lemma}. Thus, we obtain the following

\begin{itemize}
\item Semialgebraic functions $0\leq \psi _{0}\left( x\right) \leq \psi
_{1}\left( x\right) \leq \cdots \leq \psi _{s_{\max }}\left( x\right) =x$ on
$\left( 0,\delta \right) ,$ all given by convergent Puiseux expansions on $%
\left( 0,\delta \right) $.

\item Integers $k_{s}$ $\left( 0\leq k_{s}\leq D\right) $ and permutations $%
\pi _{s}:\left\{ 1,\cdots ,D\right\} \rightarrow \left\{ 1,\cdots ,D\right\}
$ for $s=1,\cdots ,s_{\max}$.

\item Semialgebraic functions $A_{ij}^{s}\left( x,y\right) $ $( s=1,\cdots ,s_{\max}$%
, $1\leq i\leq k_{s},k_{s}<j\leq D) $ and $\varphi _{i}^{s}\left( x,y\right) $
$\left( s=1,\cdots ,s_{\max},1\leq i\leq k_{s}\right) $ defined on $E_{s}=\left\{
\left( x,y\right) :0<x<\delta ,\psi _{s-1}\left( x\right) <y<\psi _{s}\left(
x\right) \right\} $.

\item Semialgebraic functions $\theta _{jl}^{si}\left( x\right) $, $%
g^{si}\left( x\right) $ $(s=0,\cdots ,s_{\max },i=1,\cdots ,i_{\max }\left(
s\right) $, $j=1,\cdots ,D,$ $l=0,\cdots ,m)$ defined on $\left( 0,\delta
\right) $, and given there by convergent Puiseux expansions.
\end{itemize}

The above objects have the following properties

\begin{itemize}
\item (Estimates) For $\left( x,y\right) \in \Omega_1 $ with $0<x<\delta $ and
$\psi _{s-1}\left( x\right) <y<\psi _{s}\left( x\right) $, we have $%
\left\vert \partial ^{\alpha }A_{ij}^{s}\left( x,y\right) \right\vert $, $%
\left\vert \partial ^{\alpha }\varphi _{i}^{s}\left( x,y\right) \right\vert
\leq C\left[ \min \left( \left\vert y-\psi _{s}\left( x\right) \right\vert
,\left\vert y-\psi _{s-1}\left( x\right) \right\vert \right) \right]
^{-\left\vert \alpha \right\vert }$ for $\left\vert \alpha \right\vert \leq
m+100$.

\item (Condition for sections) Let $F=(F_1,\cdots,F_D)\in C^m\left( \Omega_1, \mathbb{R}^D \right) $, and
suppose $J_{x}F\in H\left( x\right) $ for all $x\in \Omega_1 $.

Then for $s=1,\cdots ,s_{\max }$, $i=1,\cdots ,k_{s}$, $x\in \left( 0,\delta
\right) $, $\psi _{s-1}\left( x\right) <y<\psi _{s}\left( x\right) $, we
have
\begin{equation}
F_{\pi _{s}i}\left( x,y\right) +\sum_{D\geq j>k_{s}}A_{ij}^{s}\left(
x,y\right) F_{\pi _{s}j}\left( x,y\right) =\varphi _{i}^{s}\left( x,y\right)
\text{;}  \label{pit-1}
\end{equation}%
and for $s=0,1,\cdots ,s_{\max }$, $i=1,\cdots ,i_{\max }\left( s\right) $, $%
x\in \left( 0,\delta \right) $, we have
\begin{equation}
\sum_{j=1}^{D}\sum_{l=0}^{m}\theta _{jl}^{si}\left( x\right) \partial
_{y}^{l}F_{j}\left( x,\psi _{s}\left( x\right) \right) =g^{si}\left(
x\right) \text{;}  \label{pit-2}
\end{equation}%
and
\begin{equation}
J_{\left( 0,0\right) }F_j=0  \label{pit-3}
\end{equation}
for all $j$. 

Conversely, if $F=(F_{j})_{j=1,\cdots ,D}\in C^m_{loc}\left( \mathbb{R}^2,\R^D \right) $ satisfies \eqref{pit-1}, \eqref{pit-2}, %
\eqref{pit-3}, then $F$ is a section of $\mathcal{H}$ over $\Omega _{\delta
}^{\text{closure}}$.
\end{itemize}

Next, we set (for $s=1,\cdots ,s_{\max })$:%
\begin{equation*}
E_{s}^{+}=\left\{ \left( x,y\right) \in \mathbb{R}^{2}:0\leq x\leq \delta ,%
\text{ }0\leq \psi _{s}\left( x\right) -y\leq \frac{2}{3}\left( \psi
_{s}-\psi _{s-1}\left( x\right) \right) \right\}
\end{equation*}%
and
\begin{equation*}
E_{s}^{-}=\left\{ \left( x,y\right) \in \mathbb{R}^{2}:0\leq x\leq \delta
\text{, }0\leq y-\psi _{s-1}\left( x\right) \leq \frac{2}{3}\left( \psi
_{s}\left( x\right) -\psi _{s-1}\left( x\right) \right) \right\} \text{.}
\end{equation*}%
Then $E_{s}^{+,\text{interior}}\cup E_{s}^{-,\text{interior}}=E_{s}$. On $E_{s}^{+,\text{interior}}$ we have
$\left\vert \partial ^{\alpha }A_{ij}^{s}\left( x\right) \right\vert $, $%
\left\vert \partial ^{\alpha }\varphi _{i}^{s}\left( x,y\right) \right\vert
\leq C\left( \psi _{s}\left( x\right) -y\right) ^{-\left\vert \alpha
\right\vert }$ for $\left\vert \alpha \right\vert \leq m+100$, and on $%
E_{s}^{-\text{, interior}}$ we have $\left\vert \partial ^{\alpha
}A_{ij}^{s}\left( x\right) \right\vert $, $\left\vert \partial ^{\alpha
}\varphi _{i}^{s}\left( x,y\right) \right\vert \leq C\left( y-\psi
_{s-1}\left( x\right) \right) ^{-\left\vert \alpha \right\vert }$ for $%
\left\vert \alpha \right\vert \leq m+100$.

We may apply Lemma \ref%
{main-lemma} after a change of variables of the form $(\bar{x},\bar{y})=(x,\pm(y-\psi(x))).$

Thus, we obtain the following objects, with properties described below.

\begin{itemize}
\item Semialgebraic functions $\theta _{jl}^{+,si}\left( x\right) $, $%
g^{+,si}\left( x\right) $, $i=1,\cdots ,i_{\max }^{+}\left( s\right) $, $%
\theta _{jl}^{-,si}\left( x\right) $, $g^{-,si}\left( x\right) $, $%
i=1,\cdots ,i_{\max }^{-}\left( s\right) $, $l=0,\cdots,m,$ defined on $\left( 0,\delta
\right) $ (smaller $\delta $).

\item Semialgebraic functions $\tilde{\theta}_{jl}^{+,si}\left( x\right) $, $%
\tilde{g}^{+,si}\left( x\right) $, $i=1,\cdots ,\tilde{\imath}_{\max
}^{+}\left( s\right) $, $\tilde{\theta}_{jl}^{-,si}\left( x\right) $, $%
\tilde{g}^{-,si}\left( x\right) $, $i=1,\cdots ,\tilde{\imath}_{\max
}^{-}\left( s\right) $, $l=0,\cdots,m,$ defined on $\left( 0,\delta \right) $ (smaller $%
\delta $).
\end{itemize}

The properties for these functions are as follows.

Let $F=(F_1,\cdots,F_D)\in C^m_{loc}\left( \R^2, \R^D\right) $
satisfy \eqref{pit-1} in $E_{s}^{+,\text{interior}}$ and $J_{\left(
0,0\right) }F=0$. Then
\begin{equation}
\sum_{\substack{ 1\leq j\leq D  \\ 0\leq l\leq m}}\theta
_{jl}^{+,si}\partial _{y}^{l}F_{j}\left( x,\psi _{s}\left( x\right) \right)
=g^{+,si}\left( x\right)  \label{pit-4}
\end{equation}%
for $x\in \left( 0,\delta \right) $ and all $i$, and%
\begin{equation}
\sum_{\substack{ 1\leq j\leq D  \\ 0\leq l\leq m}}\tilde{\theta}%
_{jl}^{+,si}\partial _{y}^{l}F_{j}\left( x,\psi _{s}\left( x\right) \right) =%
\tilde{g}^{+,si}\left( x\right) +o\left( 1\right) \text{ as }x\rightarrow
0^{+}  \label{pit-5}
\end{equation}%
for $x\in \left( 0,\delta \right) $ and all $i$.

Similarly, let $F=(F_1,\cdots,F_D) \in C^m_{loc}\left( \R^2, \R^D\right) $ satisfy \eqref{pit-1} in $E_{s}^{-,\text{interior}}$ and $%
J_{\left( 0,0\right) }F=0$. Then
\begin{equation}
\sum_{\substack{ 1\leq j\leq D  \\ 0\leq l\leq m}}\theta
_{jl}^{-,si}\partial _{y}^{l}F_{j}\left( x,\psi _{s-1}\left( x\right)
\right) =g^{-,si}\left( x\right)  \label{pit-6}
\end{equation}%
for $x\in \left( 0,\delta \right) $ and all $i$, and%
\begin{equation}
\sum_{\substack{ 1\leq j\leq D  \\ 0\leq l\leq m}}\tilde{\theta}%
_{jl}^{-,si}\partial _{y}^{l}F_{j}\left( x,\psi _{s-1}\left( x\right)
\right) =\tilde{g}^{-,si}\left( x\right) +o\left( 1\right) \text{ as }%
x\rightarrow 0^{+}  \label{pit-7}
\end{equation}%
for all $i$.

\begin{itemize}
\item[\refstepcounter{equation}\text{(\theequation)}\label{pit-8}] %
Conversely, fix $s$ and suppose we are given semialgebraic functions $%
f_{jl}^{+,s}\left( x\right) $ on $\left( 0,\delta \right) $ satisfying
\begin{equation*}
\sum_{\substack{ 1\leq j\leq D  \\ 0\leq l\leq m}}\theta
_{jl}^{+,si}f_{jl}^{+,s}\left( x
\right) =g^{+,si}\left( x\right) \text{ (all }i\text{)}
\end{equation*}%
and
\begin{equation*}
\sum_{\substack{ 1\leq j\leq D  \\ 0\leq l\leq m}}\tilde{\theta}%
_{jl}^{+,si}f_{jl}^{+,s}\left( x
\right) =\tilde{g}^{+,si}\left( x\right) +o\left( 1\right) \text{ as }%
x\rightarrow 0^{+}\text{ (all }i\text{)}.
\end{equation*}%
Then there exists a semialgebraic function $F=\left( F_{1},\cdots
,F_{D}\right) \in C^m\left( E_{s}^{+},\R^D\right) $ such that \eqref{pit-1}
holds in $E_{s}^{+,\text{interior}}$ and $\partial _{y}^{l}F_{j}\left(
x,\psi _{s}\left( x\right) \right) =f_{jl}^{+,s}\left( x\right) $ and $%
J_{\left( 0,0\right) }F_{j}=0$ for all $j$.

\item[\refstepcounter{equation}\text{(\theequation)}\label{pit-9}] %
Similarly, fix $s$ and suppose we are given we are given semialgebraic
functions $f_{jl}^{-,s}\left( x\right) $ on $\left( 0,\delta \right) $
satisfying
\begin{equation*}
\sum_{\substack{ 1\leq j\leq D  \\ 0\leq l\leq m}}\theta
_{jl}^{-,si}f_{jl}^{-,s}\left( x
\right) =g^{-,si}\left( x\right) \text{ (all }i\text{)}
\end{equation*}%
and
\begin{equation*}
\sum_{\substack{ 1\leq j\leq D  \\ 0\leq l\leq m}}\tilde{\theta}%
_{jl}^{-,si}f_{jl}^{-,s}\left( x
\right) =\tilde{g}^{-,si}\left( x\right) +o\left( 1\right) \text{ as }%
x\rightarrow 0^{+}\text{ (all }i\text{)}.
\end{equation*}%
Then there exists a semialgebraic function $F=\left( F_{1},\cdots
,F_{D}\right) \in C^m\left( E_{s}^{-},\R^D\right) $ such that \eqref{pit-1}
holds in $E_{s}^{-,\text{interior}}$ and $\partial _{y}^{l}F_{j}\left(
x,\psi _{s}\left( x\right) \right) =f_{jl}^{-,s}\left( x\right) $ and $%
J_{\left( 0,0\right) }F_{j}=0$ for all $j$.

\item[\refstepcounter{equation}\text{(\theequation)}\label{pit-10}] %
Moreover, if $F=(F_1,\cdots,F_D)\in C^m\left( E_{s}^{\text{closure}},\R^D\right) $ with $%
J_{\left( 0,0\right) }F=0$, then $f_{jl}^{+,s}=\partial _{y}^{l}F_{j}\left(
x,\psi _{s}\left( x\right) \right) $ and $f_{jl}^{-,s}=\partial
_{y}^{l}F_{j}\left( x,\psi _{s-1}\left( x\right) \right) $ satisfy the key hypothesis of Lemma \ref%
{lemma-pnc-2}, namely,
\begin{equation*}
f_{jl}^{+,s}\left( x\right) -\sum_{k=0}^{m-l}\frac{1}{k!}f_{j(l+k)}^{-,s}%
\left( x\right) \left( \psi _{s}\left( x\right) -\psi _{s-1}\left( x\right)
\right) ^{k}=o\left( \left[ \psi _{s}\left( x\right) -\psi _{s-1}\left(
x\right) \right] ^{m-l}\right) \text{ as }x\rightarrow 0^{+}
\end{equation*}%
by Taylor's theorem.
\end{itemize}

Now, suppose $F=(F_1,\cdots,F_D)\in C^m_{loc}\left( \mathbb{R}^2, \R^D\right) $ is a section of $%
\mathcal{H}$ over $\Omega _{\delta }$. Then, setting $f_{jl}^{s}\left(
x\right) =\partial _{y}^{l}F_{j}\left( x,\psi _{s}\left( x\right) \right) $
for $x\in \left( 0,\delta \right) $ (smaller $\delta $), we learn that
(because the $F_{j}$ satisfy \eqref{pit-1}, \eqref{pit-2}, %
\eqref{pit-3}), properties \eqref{pit-2}$\cdots$\eqref{pit-7} yield a collection of assertions
of the form
\begin{equation}
\sum_{\substack{ j=1,\cdots ,D  \\ l=0,\cdots ,m}}\theta _{jl}^{\#,si}\left(
x\right) f_{jl}^{s}\left( x\right) =g^{\#,si}\left( x\right) \text{ on }%
\left( 0,\delta \right)  \label{pit-A}
\end{equation}%
and
\begin{equation}
\sum_{\substack{ j=1,\cdots ,D  \\ l=0,\cdots ,m}}\tilde{\theta}%
_{jl}^{\#,si}\left( x\right) f_{jl}^{s}\left( x\right) =\tilde{g}%
^{\#,si}\left( x\right) +o\left( 1\right) \text{ as }x\rightarrow 0^{+}\text{%
;}  \label{pit-B}
\end{equation}%
and also from \eqref{pit-10} we have
\begin{equation}
f_{jl}^{s}\left( x\right) =\sum_{k=0}^{m-l}\frac{1}{k!}f_{j\left( l+k\right)
}^{s-1}\left( x\right) \left[ \psi _{s}\left( x\right) -\psi _{s-1}\left(
x\right) \right] ^{k}+o\left( \left[ \psi _{s}\left( x\right) -\psi
_{s-1}\left( x\right) \right] ^{m-l}\right) \text{ as }x\rightarrow 0^{+}%
\text{.}  \label{pit-C}
\end{equation}

Conversely, if the $f_{jl}^{s}\left( x\right) $ are semialgebraic functions
of one variable, satisfying \eqref{pit-A}, \eqref{pit-B}, and \eqref{pit-C},
then for each $s=1,\cdots ,s_{\max }$ there exist $F_{+}^{s}=(F_{+,1}^{s},\cdots,F_{+,D}^{s})\in C^m\left(
E_{+}^{s\text{, closure}},\R^D\right) $, $F_{-}^{s}=(F_{-,1}^{s},\cdots,F_{-,D}^{s})\in C^m\left( E_{-}^{s\text{%
, closure}},\R^D\right) $ semialgebraic such that \eqref{pit-1}, \eqref{pit-2}, %
\eqref{pit-3} hold in $%
E_{s}^{+}$, $E_{s}^{-}$, respectively and $\partial
_{y}^{l}F_{+,j}^{s}\left( x,\psi _{s}\left( x\right) \right)
=f_{jl}^{s}\left( x\right) $, $\partial _{y}^{l}F_{-,j}^{s}\left( x,\psi
_{s-1}\left( x\right) \right) =f_{jl}^{s-1}\left( x\right) $ and $J_{\left(
0,0\right) }F_{+}^{s}=J_{\left( 0,0\right) }F_{-}^{s}=0$.

Note that $F_{+}^{s}$ is a section of $\mathcal{H}$ over $E_{s}^{+}$, and $%
F_{-}^{s}$ is a section of $\mathcal{H}$ over $E_{s}^{-}$.

Thanks to \eqref{pit-C} and Lemma \ref{lemma-pnc-2}, we may patch together $%
F_+^s$, $F_{-}^s$ into a semialgebraic $F_s =(F_{s,1},\cdots,F_{s,D})\in C^m(E_s^{\text{closure}},\R^D)$
such that $J_{(0,0)} F_s =0$, $F_s$ is a section of $\mathcal{H}$ over $E_s^{\text{closure}}$%
, and $\partial_y^l F_{sj} (x,\psi(x)) = f_{jl}^s (x)$ and $\partial_y^l
F_{sj}(x,\psi_{s-1}(x)) = f_{jl}^{s-1} (x)$.

Because of these conditions, the $F_s$ ($s=1,\cdots, s_{\max}$) fit together (their transverse derivatives up to order $m$ match at the boundaries
where the $E_s$ meet), so using also Corollary \ref{corollary3.2}, we obtain from the $F_s$ a single
semialgebraic $F=(F_1,\cdots,F_D) \in C^m_{loc} (\mathbb{R}^2, \mathbb{R}^D)$ such that $J_{(0,0)}
F =0$, and $F$ is a section of $\mathcal{H}$ over $\Omega_\delta$.

Thus, we have proven Lemma \ref{lemma-pit}.

\section{Proof of Lemma \ref{main-proposition} (Main Lemma)}\label{proofmainlemma}

From the Second Main Lemma (Lemma \ref{lemma-pit}), we can easily deduce Lemma \ref{main-proposition}.

Indeed, suppose $\mathcal{H=}\left( H\left( x,y\right) \right) _{\left(
x,y\right) \in \Omega _{\delta }}$ is as in the hypotheses of Lemma \ref{main-proposition}.

Let $\theta _{jl}^{si},g^{si},\tilde{\theta}_{jl}^{si},\tilde{g}^{si},\psi
_{s}$ be as in Lemma \ref{lemma-pit}.

For $x\in \left( 0,\delta \right) $ with $\delta $ small enough, we
introduce the following objects:
\begin{eqnarray*}
W\left( x\right)  &=&\left\{ \left( \xi _{jl}^{s}\right) _{\substack{ 0\leq
s\leq s_{\max } \\ 0\leq l\leq m \\ 1\leq j\leq D}}\in \mathbb{R}^{\left(
s_{\max }+1\right) \cdot \left( m+1\right) \cdot D}:\sum_{j,l}\theta
_{jl}^{si}\left( x\right) \xi _{jl}^{s}=g^{si}\left( x\right) \text{, each }%
s,i\right\} \text{,} \\
\mathcal{F}\left( \left( \xi _{jl}^{s}\right) ,x\right)
&=&\sum_{s,i}\left\vert \sum_{j,l}\tilde{\theta}_{jl}^{si}\left( x\right)
\xi _{jl}^{s}-\tilde{g}^{si}\left( x\right) \right\vert  \\
&&+\sum_{s\not=0}\sum_{j,l}\frac{\left\vert \xi _{jl}^{s}-\sum_{k=0}^{m-l}%
\frac{1}{k!}\xi _{j\left( l+k\right) }^{s-1}\cdot \left( \psi _{s}\left(
x\right) -\psi _{s-1}\left( x\right) \right) ^{k}\right\vert }{\left[ \psi
_{s}\left( x\right) -\psi _{s-1}\left( x\right) \right] ^{m-l}}\text{,} \\
\mathcal{F}_{\min }\left( x\right)  &=&\inf \left\{ \mathcal{F}\left( \left(
\xi _{jl}^{s}\right) ,x\right) :\left( \xi _{jl}^{s}\right) \in W\left(
x\right) \right\} \text{, and} \\
\Xi _{OK}\left( x\right)  &=&\left\{ \left( \xi _{jl}^{s}\right) \in W\left(
x\right) :\mathcal{F}\left( \left( \xi _{jl}^{s}\right) ,x\right) \leq
\mathcal{F}_{\min }\left( x\right) +x\right\} \text{.}
\end{eqnarray*}

Because $\theta _{jl}^{si},g^{si},\tilde{\theta}_{jl}^{si},\tilde{g}%
^{si},\psi _{s}$ are semialgebraic, the objects defined above depend
semialgebraically on $x$. Thanks to conclusion \eqref{pit-star} of Lemma \ref{lemma-pit}, each $%
W\left( x\right) $ and each $\Xi_{OK}(x)$ is non-empty, and
\begin{equation}
\mathcal{F}_{\min }\left( x\right) \rightarrow 0\text{ as }x\rightarrow 0^{+}%
\text{.}  \label{star1}
\end{equation}

From Theorem \ref{semialgebraic-selection} we obtain

\begin{itemize}
\item[\LA{star2}] Semialgebraic functions $\xi _{jl}^{s}\left( x\right) $
on $\left( 0,\delta \right) $ such that $\left( \xi _{jl}^{s}\left( x\right)
\right) \in \Xi _{OK}\left( x\right) $ for each $x\in \left( 0,\delta
\right) $.
\end{itemize}

In particular, for $x\in \left( 0,\delta \right) $, we have%
\begin{eqnarray}
\sum_{j,l}\theta _{jl}^{s,i}\left( x\right) \xi _{jl}^{s}\left( x\right)
&=&g^{si}\left( x\right) \text{ for each }s,i,j;  \label{star3} \\
\left\vert \sum_{j,l}\tilde{\theta}_{jl}^{si}\left( x\right) \xi _{jl}^{s}\left(
x\right) -\tilde{g}%
^{si}\left( x\right) \right\vert &\leq &\left[ \mathcal{F}_{\min }\left(
x\right) +x\right] \text{ for each }s,i;  \label{star4}
\end{eqnarray}%
and
\begin{eqnarray}
&&\left\vert \xi _{jl}^{s}\left( x\right) -\sum_{k=0}^{m-l}\frac{1}{k!}\xi
_{j\left( l+k\right) }^{s-1}(x)\cdot \left( \psi _{s}\left( x\right) -\psi
_{s-1}\left( x\right) \right) ^{k}\right\vert  \notag \\
&\leq &\left[ \mathcal{F}_{\min }\left( x\right) +x\right] \cdot \left( \psi
_{s}\left( x\right) -\psi _{s-1}\left( x\right) \right) ^{m-l}\text{, for
each }s,j,l\text{ }\left( s\not=0\right) \text{.}  \label{star5}
\end{eqnarray}

From \eqref{star1}, \eqref{star4}, \eqref{star5}, we see that
\begin{equation}
\sum_{j,l}\tilde{\theta}_{jl}^{si}\left( x\right) \xi _{jl}^{s}\left(
x\right) =\tilde{g}^{si}\left( x\right) +o\left( 1\right) \text{ as }%
x\rightarrow 0^{+}\text{,}  \label{star6}
\end{equation}%
and
\begin{eqnarray}
&&\xi _{jl}^{s}\left( x\right) -\sum_{k=0}^{m-l}\frac{1}{k!}\xi _{j\left(
l+k\right) }^{s-1}(x)\cdot \left( \psi _{s}\left( x\right) -\psi _{s-1}\left(
x\right) \right) ^{k}  \notag \\
&=&o\left( \left[ \psi _{s}\left( x\right) -\psi _{s-1}\left( x\right) %
\right] ^{m-l}\right) \text{ as }x\rightarrow 0^{+}\text{.}  \label{star7}
\end{eqnarray}

Finally, from \eqref{star2}, \eqref{star3}, \eqref{star6}, \eqref{star7}, and the assertion \eqref{conversely} in Lemma \ref{lemma-pit},
we conclude that $\mathcal{H}|_{\Omega _{\delta' }}$ has a $C^m_{loc}$
semialgebraic section for some $\delta ^{\prime }<\delta $.

This completes the proof of Lemma \ref{main-proposition} and that of Theorem \ref{main-theorem}.

\renewcommand{\refname}{\begin{center}References\end{center}}
\bibliography{papers.bib}

\end{document}